\documentclass[11pt,reqno]{amsproc}
\usepackage{lmodern}
 \usepackage[T1]{fontenc}
\usepackage{textcomp}

\usepackage[margin=1in]{geometry}
\usepackage{booktabs} 
\usepackage{amsmath, amsthm, amssymb}
\usepackage[colorlinks=true, pdfstartview=FitV, linkcolor=blue,citecolor=blue, urlcolor=blue,bookmarksdepth=2]{hyperref}
\usepackage[usenames,dvipsnames]{color}
\usepackage{mathtools}
\usepackage{biblatex}
\usepackage[export]{adjustbox}
\usepackage{subcaption}

\usepackage{comment}
\excludecomment{comment}  

\definecolor{labelkey}{rgb}{0,0,1}

\usepackage{float}
\usepackage{graphicx}
\usepackage{mathtools}

\usepackage{ifthen}
\newcounter{TypeOfWork}
\newcounter{Doktorat}
\newcounter{PracaC1}
\newcommand{\Doktorat}[1]{\ifnum \value{TypeOfWork}=\value{Doktorat}{#1}\else{}\fi}
\newcommand{\article}[1]{\ifnum \value{TypeOfWork}=\value{PracaC1}{#1}\else{}\fi}
\title[Rigorous $C^1$ integration of dissipative PDEs]{Rigorous $C^1$ integration of dissipative PDEs }
\author{Jakub Bana\'{s}kiewicz}
\address{Faculty of Mathematics and Computer Science, Jagiellonian University, ul. Łojasiewicza 6, 30-348, Kraków, 
Email: \texttt{jakub.banaskiewicz@uj.edu.pl}}
\thanks{}

\date{\today}

\newcommand{\norm}[1]{\left\|{#1}\right\|}
\newcommand{\skalarProduct}[2]{\left<{#1},{#2} \right>}
\newcommand{\brakets}[1]{\left({#1} \right)}
\newcommand{\set}[1]{\{{#1}\}}

\newcommand{\R}[0]{\mathbb{R}}

\DeclarePairedDelimiter\ceil{\lceil}{\rceil}
\DeclarePairedDelimiter\floor{\lfloor}{\rfloor}

\newtheorem{theorem}{Theorem}[section]

\newtheorem{lemma}[theorem]{Lemma}

\newtheorem{proposition}{Proposition}[section]
\theoremstyle{definition}
\newtheorem{definition}[proposition]{Definition}
\newtheorem{remark}[proposition]{Remark}

\numberwithin{equation}{section}
\newcommand{\fracLaplasianAlfa}[0]{(-\Delta)^\alpha}

\begin{document}

\begin{abstract}
We introduce a new \(C^1\) algorithm for the rigorous integration of dissipative partial differential equations. The algorithm is designed for computer-assisted proofs that require rigorous control of both solutions and their derivatives with respect to initial data. As applications, we establish the existence of locally attracting periodic orbits for initial and boundary value problems for two non-autonomous dissipative PDEs: the Chafee--Infante equation and the Burgers equation with a fractional Laplacian.
\end{abstract}
\maketitle

\section{Introduction}\label{sec:introduction}
The main goal of this article is to present a new $C^1$ algorithm for the rigorous integration of dissipative PDEs. 
The proposed algorithm can be used for computer assisted proof schemes, which need to control both the solution and its derivative with respect to initial data.
We also present the proofs of the existence of a locally attracting periodic orbit for two non-autonomous problems: one governed by the Chafee--Infante equation and one by the Burgers equation with the fractional Laplacian.
We base our method on the $C^0$ algorithm from a series of papers \cite{ZPKuramotoII,ZPKuramotoIII} where it was developed and applied for the Kuramoto--Sivashinsky equation. It was also used for other problems such as the Burgers equation, the Swift--Hohenberg equation, or the Brusselator system (see \cite{BrusselatorPrzyjetaPraca,CYRANKAFOURIER,ZgliczuCyrankaC1}). We begin by presenting our approach to $C^1$ rigorous computations, with a brief comparison with the other ones. Then, we present the benchmark problems to which the method is applied. Next, we describe the results of the $C^1$ computations for these problems.  

\subsection{Algorithm for rigorous \texorpdfstring{$C^1$}{C1} integration of dissipative PDEs} 
Rigorous $C^1$ computation is a challenging problem for PDEs. Not only do we need to work with infinite dimensional phase space, but also we need to estimate the derivatives of solutions with respect to initial data. This requires solving the variational equation associated with the original problem for an infinite number of directions. To our best knowledge, the first $C^1$ computation was presented in article \cite{ArioliKoch2010} to validate the existence of a hyperbolic periodic solution for the Kuramoto--Sivashinsky equation. The technique was based on the Newton--Kantorovich approach. However, that work does not provide a general algorithm for $C^1$ computations. 

The $C^1$ 
computation algorithm based on rigorous integration of PDEs was firstly presented in article \cite{ZgliczuCyrankaC1}, and was used to prove the existence of a locally attracting periodic orbit for the non-autonomous Burgers equation. The method there was based on the estimation of the logarithmic norm, and therefore it is not easy to generalise this method to other problems whose solving requires more subtle information about the derivative.
The most recent approach comes from the papers \cite{wilczak2024symbolic,wilczak2025selfconsistent}. These papers present the $C^1$ integration algorithm and apply it to the Kuramoto--Sivashinsky equation.
The main result of these works is the extension of the result about chaos in this problem, to the existence of an infinite number of heteroclinic solutions (see \cite[Theorem 2]{wilczak2024symbolic}).
The approach presented there is based on the splitting of the operator into blocks and control of their norms during the evolution.

In our technique,  we work directly with the variational equation and by applying the $C^0$ algorithm we estimate the derivative of the flow $D_{x_0}\varphi (t,t_0,x^0) h^0.$ This allows us to estimate the image $D_{x_0} \varphi (t,t_0,X^0) X^{0,h},$ where $X^0$ is the initial data for the original problem and $X^{0, h}$ is the initial data for the variational part. 
From the linearity of the derivative, we can combine the results of different computations by taking finite linear combinations of them. 
For the infinite dimensional tail part we also use $C^0$ algorithm applied to the set $X^{0,h}$ constructed in such a way that it allows us to get information about the image of $D_{x_0} (t,t_0,X^0)h^0$ for every $h^0$ in infinite dimensional spaces representing the 'tail' of Fourier expansion. It is also important that we also allow the set $X^{0,h}$ to be unbounded in the phase space.
The more detailed overview of the algorithm can be found in Section \ref{sec:Overview_C1_algorithm}.

\subsection{The non-autonomous Chafee--Infante problem} The first problem under consideration is the non-autonomous Chafee--Infante equation, which has the following form
\begin{equation}\label{eq:Chafee-InfateNonautonomous}
\begin{cases}
 u_t =   u_{xx}+ \lambda u -b(t)u^3 \;  \text{for}\; (x,t)\in (0,\pi)\times(t_0,\infty),\\
u(t,x) = 0\; \text{for}\; (x,t)\in \{0,\pi\} \times (t_0,\infty),
\\u(t_0,x) = u^0(x),\ \textrm{for}\ x\in(0,\pi),
\end{cases}
\end{equation}
with $\lambda>0$ and $b(t)$ is the function dependent on time.
The autonomous Chafee--Infante equation where, e.g., $b(t)=1$ for $t\in\mathbb{R}$, is one of the few examples where the full structure of the global attractor on the one hand is nontrivial and, on the other hand, can be described purely analytically with respect to the parameter $\lambda>0.$ This attractor contains equilibria, arising from consecutive pitchfork bifurcations of the zero solution and heteroclinic connections between them. The number of equilibria is $2n+1,$, where $n$ is a natural number such that $n<\sqrt{\lambda}\leq n+1.$ This equation was first proposed and investigated by Chafee and Infante in the article \cite{ChafeeInfanteOryginalny}, where the authors construct the equilibria. 
Later, Henry, Matano, Fiedler, Rocha \cite{HENRY1985165,MatanoLapNumber,FiedlerLapNumber} developed the theory which allows one to obtain the heteroclinic connections. This has led to a complete understanding of the attractor structure (see also \cite[Theorem 4.3.12]{hale1988asymptotic}).
In the non-autonomous version of the problem, given by \eqref{eq:Chafee-InfateNonautonomous}, such a description remains unknown, the difficulty arises from the fact that here the equilibria are replaced by non-constant trajectories, called ''non-autonomous equilibria''. The existence of non-autonomous attractors for this problem has been proven in \cite{ChepyzhovVishik}. The partial results on the structure of the attractors were obtained in \cite{ChafeeInfanteLangaCarvalho} (see also \cite[Chapter 13]{LangaCarvalhoBook}). The authors proved there that there exists a global positive solution which attracts all positive initial conditions (\cite[Theorem 4]{ChafeeInfanteLangaCarvalho}). Additionally, from \cite[Theorem 5]{ChafeeInfanteLangaCarvalho} there exist at least  $2n+1$ global solutions to the Chafee--Infante system, where $n$ is a natural number such that $n< \sqrt{\lambda}\leq n+1$. These solutions, for which the number of zeros in the space variable is finite and constant in time, are supposed to be non-autonomous counterparts of the hyperbolic equilibria of the autonomous Chafee--Infante equation. However, it is still unknown if the found solutions are all non-autonomous equilibria, and, to the best of author's knowledge,  there are no results concerning the existence of non-autonomous heteroclinic connections. Moreover, all these results were proved with the assumption that
the non-autonomous term $b(t)$ is positive for every $t\in\mathbb{R}$ and separated from zero and infinity, e.g., $b_0\leq b(t)\leq B_0$ for $0<b_0\leq B_0.$ In the case when $b(t)$ can take a negative value for some $t\in\mathbb{R},$ even the existence of a global nonzero solution remained unknown. The main challenge here is the possibility of blow-up behavior coming from the term $u^3$, which leads to an equation with a positive constant for some time $t$. Using computer‑assisted methods, we are able to overcome this difficulty and prove the following result.
\begin{theorem}\label{th:Chaffe-InfanteMainTheorem}
The Chafee--Infante equation \eqref{eq:Chafee-InfateNonautonomous} for parameters
\begin{equation*}
    (\lambda,b(t)) \in \left\{\left(2,\, \frac{3}{2}\sin(2\pi t) +1\right),\left(2 ,\frac{1}{2}\sin(2\pi t)+1\right),\left(5,\frac{1}{2}\sin(2\pi t)+1\right)\right\}
\end{equation*}
 has the periodic orbit $u^*(t,.):\mathbb{R}\to C_0(0,\pi)$ with a period $1.$ Moreover there exist $\delta>0,$   $D>0$ and $\kappa>0$ such that for every 
initial time $t_0\in\mathbb{R}$ and initial data $u^0\in C_0(0,\pi)$ satisfying $\norm{u^*(t_0)-u^0}_{C_0(0,\pi)}\leq\delta$ there  exists a solution $u:[t_0,\infty)\to C_0(0,\pi)$ with $u(t_0)=u^0$ such that
\begin{equation*}
    \norm{u^*(t)-u(t)}_{C_0(0,\pi)} \leq De^{-\kappa(t-t_0)} \norm{u^*(t_0)-u(t_0)}_{C_0(0,\pi)}\quad\text{for}\;t\in[t_0,\infty).
\end{equation*}
\end{theorem}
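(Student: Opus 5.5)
The plan is to reduce the statement to a fixed-point and contraction property of the period-one Poincaré map. Let $P(u^0)=\varphi(1,0,u^0)$, i.e.\ $P=\varphi(t_0+1,t_0,\cdot)$ for any integer $t_0$. It is well defined because $b$ is $1$-periodic. We work in the Fourier sine basis $\{\sin(kx)\}_{k\ge1}$, which diagonalises $u_{xx}$ with Dirichlet conditions. Phase-space sets are products of a finite-dimensional interval block for modes $k\le m$ and a polynomially decaying tail $|a_k|\le C/k^s$ for $k>m$; with $s>1$ these sets embed into $C_0(0,\pi)$.

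The first step uses the $C^0$ algorithm of \cite{ZPKuramotoII,ZPKuramotoIII}. We take a candidate set $W$ of this form, centred at a numerically computed approximate periodic orbit, and verify rigorously that $P(W)\subset \operatorname{int} W$. The sets must be compact in the relevant topology and the tail bounds decaying, so that solutions exist for $t\in[0,1]$. A priori there is no blow-up from the $-b(t)u^3$ term when $b<0$ (the case $\tfrac32\sin+1$), but this is handled because the rigorous enclosure itself certifies existence on the whole period. The Schauder fixed point theorem then gives a fixed point $u^*(0)\in W$, hence a $1$-periodic orbit $u^*$.

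The second step uses the $C^1$ algorithm from Section~\ref{sec:Overview_C1_algorithm}. We integrate the variational equation $h_t=h_{xx}+\lambda h-3b(t)u^2h$ along all $u^0\in W$. For the finitely many leading directions $e_1,\dots,e_m$ this gives enclosures of $D P(W)e_j$. For the tail we use one (possibly unbounded) set $X^{0,h}$ of tail directions, and linearity combines these into a bound on $DP(u)$ for all $u\in W$. The aim is to certify $\sup_{u\in W}\|DP(u)\|<1$ in a suitable weighted norm $\|\cdot\|_*$ on the Fourier coefficients; an $\ell^1$-type norm dominates the sup norm. Since $W$ is convex, the mean value inequality gives $\|P(u)-P(v)\|_*\le L\|u-v\|_*$ with $L<1$. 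Hence $u^*(0)$ is the unique fixed point in $W$ and attracts all of $W$ geometrically.

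The third step transfers this to arbitrary $t_0\in\mathbb R$ and to the $C_0$ norm. For real $t_0$ we verify the enclosure and the derivative bound for the maps $\varphi(t,t_0,\cdot)$ with $t_0\in[0,1]$ and $t\in[t_0,t_0+1]$, which is done with interval time steps. This yields uniform constants $M$ with $\|D\varphi(t,s,u)\|_*\le M$ for $0\le t-s\le 1$ and $u$ near the orbit. Composing gives $\|u(t)-u^*(t)\|_*\le M L^{\lfloor t-t_0\rfloor}\|u^0-u^*(t_0)\|_*$, so we can take $\kappa=-\ln L$.

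Converting between $\|\cdot\|_*$ and $\|\cdot\|_{C_0}$ is the delicate point, since a $C_0$-small perturbation need not have decaying Fourier coefficients. To handle it, we use parabolic smoothing: for data $\delta$-close in $C_0$, comparison and maximum-principle estimates on a short interval $[t_0,t_0+\tau]$ give existence and a $C_0$ bound. Smoothing estimates for the heat semigroup then place $u(t_0+\tau)$ inside the Fourier-decay set with a Lipschitz bound in terms of $\|u^0-u^*(t_0)\|_{C_0}$. Back-conversion uses $\|\cdot\|_{C_0}\le\|\cdot\|_{\ell^1}$. Together these give the constants $D,\delta$.

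The main obstacle is the second step: obtaining $\|DP\|<1$ uniformly over the infinite-dimensional tail. For large $\lambda=5$ and the large-amplitude $b$, the low modes are only weakly dissipative, so a crude enclosure of the tail directions will fail. We would first try choosing $m$ large enough that $k^2-\lambda-3\max|b|\,\|u\|_\infty^2$ is strongly negative for $k>m$, so the tail block contracts by roughly $e^{-(m^2-\lambda)}$. The tail-to-head coupling would be controlled through the decay bound of $X^{0,h}$. If necessary, we would subdivide $W$ and the unit interval in time to sharpen the interval enclosures.
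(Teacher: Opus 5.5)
Your first step matches the paper: Schauder on a self-mapped set, i.e.\ Lemma \ref{lem:AbstractNonAutonomousOrbit}. Your second step uses the same $C^1$ machinery, including an unbounded set of tail directions. The two routes separate at the choice of norm and at how attraction is deduced, and yours is a valid but genuinely different and heavier route.

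\textbf{What the paper does.} It never leaves $C_0(0,\pi)$. Since $\|v\|_{C_0}\le 1$ forces $|v_i|\le 2$, every element of the $C_0$ unit ball is a combination of $\sin(ix)$, $i\le n$, with coefficients in $[-2,2]$, plus an element of $2V_{rest}$. Here $V_{rest}=\{v_i\in[-1,1]\}$ is unbounded in $C_0$. The paper integrates $V_{rest}$ using Lemma \ref{lemma:enclosureLinearBounded} together with the convolution estimate of Lemma \ref{lem:costimesSinDiverging} for $s_2=0$. It then bounds $C_0$ norms of the images by $\ell^1$ sums (Lemmas \ref{lem:C0operatorNormEstimation}, \ref{lem:C0NormEstimation}). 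This gives $\sup_{x\in X^0}\|V(1,0,x)\|_{\mathcal{L}(C_0(0,\pi))}\le 0.421383$ directly.

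Attraction then follows from the abstract Lemma \ref{lem:AbstractNonAutonomousAttracting}. It uses the derivative only at $u^*(0)$. It handles arbitrary real $t_0$ and produces $\delta, D$ from the uniform Taylor remainder of Lemma \ref{lem:VariationalProperties} and the finite-time Lipschitz bound of Lemma \ref{rem:LongTimeExistence}, all in the $C_0$ norm. So the paper needs no mean-value argument over $W$, no rigorous integration with interval initial times, and no smoothing-based conversion between norms. Making the $C_0$ operator norm computable is exactly what the unbounded-set integration is for.

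\textbf{What your route buys.} You get a contraction on all of $W$, hence uniqueness of the orbit in $W$ and an explicit basin. You also get explicit constants. The certified rate may be sharper, because the $\ell^1$ operator norm is a maximum of column norms. That avoids both the factor $2$ and the sum over leading columns in Lemma \ref{lem:C0operatorNormEstimation}.

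\textbf{Where the cost is.} Your third step is harder than sketched. $W$ has empty interior in any $\ell^1$-type norm: an $\ell^1$ ball around $u^*(0)$ always contains points violating $|a_k|\le C/k^s$ for large $k$. So one heat-semigroup estimate is not enough, since the Duhamel term with $C_0$-bounded forcing only gives $1/k^2$ decay. You must bootstrap through the cubic term to reach
\[
|u_k-u^*_k|\le c\,\|u^0-u^*(t_0)\|_{C_0}\,k^{-s}
\]
at some integer time, where $s$ is the tail exponent of $W$. The constant $c\delta$ must also stay below the margin in $P(W)\subset\operatorname{int}W$. Only then does your contraction on $W$ apply.
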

This, to our knowledge, is the first result on the existence of a  global nonzero solution with $b(t)$ that changes signs. Moreover, we prove that this solution is locally exponentially attracting in the $C_0(0,\pi)$ space. To prove these results, we use a new developed technique of $C^1$ computations based on rigorous integration of unbounded sets. 

\subsection{The non-autonomous forced Burgers equation with fractional Laplacian} The second problem under consideration is the non-autonomous forced Burgers equation with fractional Laplacian and odd periodic boundary conditions, given by the following equations:
\begin{equation}\label{eq:BurgersFractionalLaplacianEquation}
\begin{cases}
        u_t= -\fracLaplasianAlfa u+\nu(u^2)_x +g(t,x)\; \text{for}\; (x,t)\in \R\times(t_0,\infty),\\
        u(t,x+2\pi)  = u(t,x)\; \text{for}\; (x,t)\in \mathbb{R}\times (t_0,\infty),\\
        u(t,-x)  = -u(t,x)\; \text{for}\; (x,t)\in \mathbb{R}\times (t_0,\infty),
    \\u(t_0,x) = u^{0}(x)\, \textrm{ for}\ x\in\mathbb{R}.
\end{cases}
\end{equation}
In the above problem $\alpha\in(0,1],\;\nu\in \mathbb{R}$, and $g(t,x)$ is an odd-periodic function. The fractional Laplacian is defined in terms of the Fourier expansion of odd-periodic functions
\begin{equation*}
    \fracLaplasianAlfa u =\sum_{k=1}^\infty k^{2\alpha}u_k\sin(kx).
\end{equation*}
If we consider the restriction of $u$ to the interval $[0,\pi],$ then $u$ must satisfy the homogeneous Dirichlet boundary conditions in the endpoints of this interval. In the above equation, we have the interaction between the dissipative term $-\fracLaplasianAlfa$ and the term $(u^2)_x$ that can lead to a blow up for the space derivatives of solutions.
In the autonomous case $g(t,x) =0,$ the characterization of the regularity of the solutions is given in \cite{KiselevNazarovShterenberg2008} and depends on $\alpha.$ For  $\alpha\in[\frac{1}{2},1]$   there exists a global solution for sufficiently smooth initial data  (see  \cite[Theorem 1.1, Theorem 1.3]{KiselevNazarovShterenberg2008}). Especially interesting are the results for $\alpha=\frac{1}{2}$, as that is the critical case for this equation, and the authors of \cite{KiselevNazarovShterenberg2008} develop a new method based on the modulus of continuity to handle this case.
In contrast, for $\alpha\in(0,\frac{1}{2})$ there exists an initial datum that leads to a blow-up of space derivatives in finite time (\cite[Theorem 1.4]{KiselevNazarovShterenberg2008}). We are interested in this equation with nonzero $g(t,x).$ For the standard Laplacian, that is, $\alpha = 1,$ the paper \cite{KalitaZgliczynski2020} contains results on the existence of global solutions which are exponentially attracting all initial data (see \cite[Theorem 4.1]{KalitaZgliczynski2020}). In our article, we focus on the case where $\frac{1}{2}<\alpha\leq 1.$ We prove the existence of a locally attracting periodic orbit. We want to check how close  we can get with the parameter $\alpha$ to the critical case $\frac{1}{2}$. We have proven the following theorem 
\begin{theorem}\label{th:TheoremBurgersFractionalLaplacianEquation}
    The Burgers equation \eqref{eq:BurgersFractionalLaplacianEquation} for  paramteres 
    \begin{equation*}
        \nu=\frac{1}{2},\quad\alpha \in\left\{\frac{1}{2}+\frac{i}{64},\; 11\leq i \leq 32\right\} ,\quad g(t,x)= \left(\frac{1}{2}\sin(2\pi t)+1\right)\sin(x),
    \end{equation*}
 has a periodic orbit $u^*(t,.):\mathbb{R}\to H^2_\text{odd}(-\pi,\pi)$ with a period $1.$ Moreover there exist $\delta>0,$   $D>0$ and $\kappa>0$ such that for every 
initial data $t_0\in\mathbb{R}$ and $u^0\in H^2_\text{odd}(-\pi,\pi)$ satisfying $\norm{u^*(t_0)-u^0}_{H^2_{odd
}(-\pi,\pi)}\leq\delta$ there  exists a solution $u:[t_0,\infty)\to H^2_\text{odd}(-\pi,\pi)$ with $u(t_0)=u^0$ such that
\begin{equation*}
    \norm{u^*(t)-u(t)}_{H^2_\text{odd}(-\pi,\pi)} \leq De^{-\kappa(t-t_0)} \norm{u^*(t_0)-u(t_0)}_{H^2_\text{odd}(-\pi,\pi)}\quad\text{for}\;t\in[t_0,\infty).
\end{equation*}
\end{theorem}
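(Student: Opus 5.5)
The proof will be computer assisted and follow the standard Poincaré-map approach for time-periodic forcing. Since $g$ is $1$-periodic in $t$, the time-$1$ map $P_{t_0}(u^0)=\varphi(t_0+1,t_0,u^0)$ is well defined on a suitable set. Because the equation is periodic, $P_{t_0}$ and $P_{t_0+1}$ coincide. We work in Fourier coordinates $u=\sum_k u_k\sin(kx)$ and use the space of sequences with polynomial decay $|u_k|\le C/k^s$ with $s$ large. This space embeds in $H^2_{\mathrm{odd}}$ and is the phase space used by the $C^0$ algorithm of \cite{ZPKuramotoII,ZPKuramotoIII}. The plan has three steps.

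\emph{Step 1 (existence of the periodic orbit).} We first compute a numerical approximation $\bar u$ of a fixed point of $P_0$. We build a set $W=\bar u+[\text{box in the first } m \text{ modes}]\times T$, where $T$ is a polynomial-decay tail $|u_k|\le C/k^s$ for $k>m$. Rigorous integration by the $C^0$ algorithm should give $P_0(W)\subset W$. Since $P_0$ is compact in this topology (the equation is smoothing for $\alpha>\tfrac12$) and $W$ is convex and compact, the Schauder fixed point theorem gives a fixed point $u^*(0)\in W$. We then set $u^*(t)=\varphi(t,0,u^*(0))$ and extend it periodically.

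\emph{Step 2 (contraction through the $C^1$ algorithm).} We estimate $D P_0(u)$ for all $u\in W$ by integrating the variational equation
\begin{equation*}
h_t=-\fracLaplasianAlfa h+2\nu(uh)_x
\end{equation*}
together with the main equation. For the finitely many leading directions we integrate the unit vectors $e_1,\dots,e_m$. For the tail we integrate a single unbounded set $X^{0,h}$ that encodes all $h^0$ supported in modes $k>m$ with $\norm{h^0}\le1$. By linearity of the derivative, the results combine into a bound $\norm{DP_0(u)}\le q<1$ in a suitably weighted norm equivalent to the $H^2_{\mathrm{odd}}$ norm, uniformly on the convex set $W$. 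The mean value inequality then shows that $P_0$ is a contraction on $W$. Hence the fixed point is unique, and $\norm{P_0^n(u^0)-u^*(0)}\le q^n\norm{u^0-u^*(0)}$.

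\emph{Step 3 (uniformity in $t_0$ and the exponential estimate).} For an arbitrary initial time $t_0$, we reduce to $t_0\in[0,1)$ by periodicity. We then integrate rigorously the set $W$ and its variational data on $[0,2]$. This should show that every $u^0$ with $\norm{u^0-u^*(t_0)}\le\delta$ gets mapped into $W$ at time $1$, with derivative bounded by some $L$ over time intervals of length at most $1$. From time $1$ onward the contraction of Step 2 applies. Combining the contraction with the bound $L$ on intermediate times gives
\begin{equation*}
\norm{u(t)-u^*(t)}\le D e^{-\kappa(t-t_0)}\norm{u^0-u^*(t_0)}
\end{equation*}
with $\kappa=-\log q$ and $D$ depending on $L$ and $q$. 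Global existence of $u$ follows because the solution stays in $W$ at integer times, and the a priori bounds given by the rigorous enclosures hold in between.

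\emph{Main obstacle.} The hard part is making the tail estimate in Step 2 succeed for $\alpha$ close to $\tfrac12$. The dissipation $k^{2\alpha}$ barely dominates the loss of one derivative in the term $(uh)_x$, which grows like $k$. We therefore expect to need many modes $m$, carefully tuned decay exponents, and a weighted norm in which the tail contracts. Our first choice is the norm $\max\bigl(|h|_{\text{first } m\text{ modes}},\sup_{k>m}k^{s}|h_k|/\gamma\bigr)$ with $\gamma$ optimized, possibly iterating $P_0$ several times before the contraction shows up. We would also treat each $\alpha$ in the list $\tfrac12+\tfrac{i}{64}$ separately as its own computation.
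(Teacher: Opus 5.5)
\textbf{Where the proposal breaks: Step 3.} Step 1 and the variational enclosure in Step 2 essentially match the paper: Schauder on a compact convex polynomial-decay set via Lemma \ref{lem:AbstractNonAutonomousOrbit}, then the $C^1$ algorithm over that set. Splitting into $e_1,\dots,e_m$ plus a tail set, instead of one box, is a minor variation. Step 3, however, does not work as written, and it is exactly where the uniformity in $t_0$ has to come from.

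For Schauder, your $W$ must be compact in $H^2_{\mathrm{odd}}$. So its tail is $|u_k|\le C/k^s$ with $s>5/2$, and such a set has empty interior in $H^2_{\mathrm{odd}}$. Two consequences follow.
\begin{itemize}
\item The contraction of $P_0$ on $W$ from the mean value inequality says nothing about data in the ball $\norm{u^0-u^*(t_0)}\le\delta$.
\item The claim that this ball lands in $W$ at time $1$ is false for $t_0$ near $1$. Take $u^0=u^*(t_0)+\delta\sin(Kx)/(\sqrt{\pi}K^2)$, which is at $H^2$-distance $\delta$, with $K>m$ so large that $\delta/(\sqrt{\pi}K^2)>2C/K^s$. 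As $t_0\to1^-$, $\varphi(1,t_0,u^0)\to u^*(1)+\delta\sin(Kx)/(\sqrt{\pi}K^2)$ in $H^2_{\mathrm{odd}}$. The $K$-th coefficient of this limit violates the tail bound of $W$, because the smoothing time $1-t_0$ vanishes.
\end{itemize}
Integrating $W$ from time $0$ on $[0,2]$ does not help. It only encloses $\varphi(t,0,\cdot)$ and $V(t,0,\cdot)$. It gives neither the image of an $H^2$-ball started at $t_0$ nor a bound on $V(1,t_0,\cdot)$, since $V(t_0,0,\cdot)$ cannot be inverted. The same objection applies to your global-existence argument.

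\textbf{The missing idea.} None of this has to be computed; it is abstract. The paper's Lemma \ref{lem:AbstractNonAutonomousAttracting} works as follows.
\begin{itemize}
\item The incomplete periods $[t_0,\lceil t_0\rceil]$ and $[\lfloor t\rfloor,t]$ are handled by the Lipschitz constant $C_1(R,1)$ of Lemma \ref{rem:LongTimeExistence}. It depends only on $R=\sup_t\norm{u^*(t)}_{H^2_{\mathrm{odd}}}$, so it is uniform in $t_0$.
\item The quadratic remainder estimate of Lemma \ref{lem:VariationalProperties} turns $a=\norm{V(1,0,u^*(0))}_{\mathcal{L}(H^2_{\mathrm{odd}})}<1$ into a contraction by $a+\varepsilon$ per period. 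This holds on a small $H^2$-ball around $u^*(0)$, not on $W$.
\item Global existence comes from the same two lemmas.
\end{itemize}
So you only need the derivative bound at the single point $u^*(0)\in W$, which your uniform bound over $W$ already supplies. The contraction of $P_0$ on $W$ is then a bonus (uniqueness in $W$), not a step of the proof.

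\textbf{The norm.} Lemma \ref{lem:AbstractNonAutonomousAttracting} needs $a<1$ in the $H^2_{\mathrm{odd}}$ operator norm, or in a norm equivalent to it (with $D$ adjusted). Your first choice $\max\bigl(|h|_{\text{first } m\text{ modes}},\sup_{k>m}k^s|h_k|/\gamma\bigr)$ is equivalent for no $s$. Testing on $\sin(Kx)$ forces $s\le 2$. Testing on truncations of $\sum_{k>m}k^{-s}\sin(kx)$ forces $s>5/2$. A contraction in that norm therefore yields neither the $H^2_{\mathrm{odd}}$ estimate of the theorem nor anything on an $H^2$-ball.

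The paper avoids this as follows. It integrates the single box $X^{0,h}=\{|h_k|\le 1/k^2\}$, which contains a ball of $H^2_{\mathrm{odd}}$ though it is unbounded there. It then measures the image in the genuine $H^2_{\mathrm{odd}}$ norm. That norm is finite because the variational flow raises the decay exponent above $5/2$ (Lemmas \ref{le:BurgersNormEstimate1} and \ref{le:BurgersNormEstimate2}).

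If you want an adapted norm to push closer to $\alpha=\tfrac12$, it must be equivalent to the $H^2_{\mathrm{odd}}$ norm. An example is $\bigl(\sum_k w_k k^4|h_k|^2\bigr)^{1/2}$ with $w_k$ bounded above and away from zero.

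Incidentally, Table \ref{tab:results} reports the bound $1.01375$ at $\alpha=\frac{43}{64}$, which is $i=11$ in the statement. So the paper's own computation validates attraction only for $12\le i\le 32$.
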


The above result is obtained using computer assisted methods and, in particular, our newly developed $C^1$ algorithm.

\subsection{Application of \texorpdfstring{$C^1$}{C1} algorithm in benchmark problems}
 In the paper, we present the following applications of our $C^1$ algorithm:
 \begin{itemize}
     \item In the case of the Burgers equation with the fractional Laplacian, we integrate the set $X^{h,0} =\left\{u\in H^2_{\text{odd}}(-\pi,\pi): u_k\in \frac{[-1,1]}{k^2},\;k\in\mathbb{N}\right\}$ which contains a certain  ball in $H^2_{\text{odd}}.$ This allows us to prove the attraction in Theorem \ref{th:TheoremBurgersFractionalLaplacianEquation}.
     In these computations, we only need to apply the $C^0$ algorithm to the system of equations \eqref{eq:BurgersVariational}, with initial data $X^0\times X^{h,0}.$ 
     \item We can also integrate unbounded sets, which gives us the control of every mode. 
     We present such a method for the Chafee--Infante equation. Namely, we are able to integrate the set in form
\begin{align*}
    X^{h,0} &=\left\{u\in C_0(0,\pi): u_k\in [-1,1],\;k\in\mathbb{N}\right\}\text{ or even }
    \\X^{h,0}&=\left\{u\in C_0(0,\pi): u_k\in [-k^p,k^p],\;k\in\mathbb{N}\right\}\text{ with $p>0.$}
\end{align*}
This can give us the effective estimate of $\frac{\partial }{\partial x} \varphi (t,t_0,X^0)$ over every mode (see for example \eqref{eq:VarationalDecayingEstimate}).
For these estimates, we use Lemmas \ref{lem:costimesSinDiverging}, \ref{lem:costimesSinSlowConvering} in which we derive the estimates on the convolutions of two series, where one of them is constant or increasing. We use these estimates in the proof of Theorem \ref{th:Chaffe-InfanteMainTheorem}
\item In Section \ref{sec:chafee-Infate Addition}, we perform the $C^1$ computation around the zero solution, to see if the results match the theoretical formulas. We additionally conduct the $C^1$ computation for the numerically found unstable periodic orbit to show the wide applicability of the algorithm, and the possibility to utilise the algorithm to show the hyperbolicity of unstable orbits. 
\end{itemize}
  
\subsection{The further possible applications of \texorpdfstring{$C^1$}{C1} algorithm}
The few possible developments and further applications of this work are following:
\begin{itemize}
    \item Application of $C^1$ algorithm to develop computations for estimations of the derivative of Poincar\'{e} maps, and use them to validate the hyperbolicity of periodic solution in the autonomous system.
    \item Application of $C^1$ algorithm to prove the existence of possibly unstable periodic  orbits using the rigorous Newton method or the Krawczyk method presented, for example, in \cite{ZgliczuKrawczyk}. 
    \item Developing the algorithm of rigorous integration and its $C^1$ counterpart for the systems with non-diagonal linear term like, such as, for example, the Ginzburg--Landau equation. 
\end{itemize}
The developed work can be applied to obtain the following additional results for PDEs, which are possible topics of our further planned work:
\begin{itemize}
    \item Extension of study on the Brusselator system from work \cite{BrusselatorPrzyjetaPraca,ArioliBrusselator}, with the use of the $C^1$ algorithm. We would like to prove that the periodic solution from \cite[Theorem 1.1.]{BrusselatorPrzyjetaPraca} is locally attracting. Additionally, we would like to prove the existence of an unstable periodic orbit, which should exist after period doubling bifurcation (see \cite[Theorem 4.3.]{BrusselatorPrzyjetaPraca} and \cite[Theorem 3.]{ArioliBrusselator}), and the existence of heteroclinic connections between stable and unstable orbits.
    \item Further investigation of the structure of attractors for the non-autonomous Chafee--Infante equation, including the heteroclinic connections between the various solutions, which play the role of non-autonomous equilibria. The final goal of this endeavor would be to fully characterize the non-autonomous attractor for a given parameters. 
\end{itemize}

\subsection{Structure of the article} The structure of the article is as follows. In the next Section \ref{sec:Abstractnonlinearequation} we recall the theory of nonlinear abstract equations in the Banach spaces and the theory of variational equations for them.
 Section \ref{sec:AlgorithmIntegration} is devoted to the description of $C^0$ and $C^1$ integration algorithms for the abstract problem formulated in the preceding section.
 In Section \ref{sec:Chafee-Infante results}, we describe the computer assisted proofs for the Chafee--Infate and Burgers equation.
Finally, Section \ref{sec:Appendix} is the Appendix which contains some auxiliary inequalities and estimates. The main part of it is Section \ref{sec:algebra} which contains the results on the algebra of infinite series utilized in the algorithms.

{\color{black}
The codes of programs used in computer assisted proofs for the Chafee--Infante equation are available on the web site \cite{CodeChaffeInfante}, and for Burgers equation at \cite{CodeBurgers}. They are based on the CAPD library \cite{CAPD}, which contains modules for both nonrigorous and validated numerics for dynamical systems}.

\subsection{Notation} By $\mathbb{N}$ we denote the set of natural numbers starting with $1$. For a normed space $X$ by $\norm{\cdot}_{X}$ we denote the norm in this space and by $B_{X}(x^0,r):=\left\{x\in X: \norm{x^0-x}_X<r\right\}$ the open ball in this space.  By $L^2(a,b)$ we denote the standard $L^2$ Hilbert space in the interval $(a,b).$ By $H^k(a,b)$ we denote the standard Sobolev spaces. In addition, $C_0(a,b)=\{f:[a,b]\to\mathbb{R}: f\text{ is continuous and satisfies } f(a)=f(b)=0\}.$ Additionally, we denote $L^2_{odd}(-\pi,\pi)$, $H^1_{odd}(-\pi,\pi)$, $H^2_{odd}(-\pi,\pi)$ the subspaces of odd functions in interval $(-\pi,\pi),$ whose periodic extensions on $\mathbb{R}$ belong to  $L^2_{loc}(\mathbb{R}),$ $H^1_{loc}(\mathbb{R})$ and $H^2_{loc}(\mathbb{R})$, respectively. For a given Banach space $X,Y$ we denote by $\mathcal{L}(X;Y)$ the space of linear and bounded operators from $X$ to $Y.$ We write $\mathcal{L}(X) = \mathcal{L}(X;X).$

\section{Nonlinear differential equations in Banach spaces}\label{sec:Abstractnonlinearequation}

 In this section we discuss the theory of semilinear problems in Banach spaces, together with their variational equations, which describe the derivative of the solution with respect to the initial data. The link between  variational and original  problem, provided in Lemma \ref{lem:VariationalProperties}, is crucial for further steps of the proofs.

\subsection{Abstract problem}\label{sec21}
\article{
We recall some facts about abstract differential equations in the Banach spaces, which we use in the article.}
Let $Y,\,W$ be Banach spaces such that
$Y$ is continuously and densely embedded in $W.$
We consider the following problem
\begin{equation}\label{eq:AbstractProblem}
\begin{cases}
   \frac{d}{dt}x(t) = Lx(t) + f(t,x(t)) = F(t,x(t)),
   \\ x(t_0) = x^{0},
\end{cases}
\end{equation}
where $L:D(L)\to Y$ is a linear operator, which generates the $C_0$ semigroup $e^{tL}:Y\to Y$.
We assume that $x^0\in Y$, and $f:\mathbb{R} \times Y\to W$ is a continuous mapping. In the article we work with the following assumptions:
\begin{itemize}
    \item [(BL1)]\label{as:BL1} For some constants $M_1,\kappa_1>0$ we have $ \norm{e^{tL}}_Y \leq  M_1 e^{\kappa_1 t}$ for every $t\in[0,\infty).$
    \item [(BL2)]\label{as:BL2} The semigroup $e^{tL}$ can be extended to the $C_0$ semigroup on $W.$ There exist constants $M_2,\kappa_2>0$ and $\gamma\in[0,1)$ such that for every $z\in W$, we have $\norm{e^{tL}z }_Y\leq M_2e^{\kappa_2 t}\frac{1}{t^\gamma}\norm{z}_{W}$ for every $t\in(0,+\infty).$
    \item [(Bf)]\label{as:Bf} For every $R>0$ there exists $C(R)>0$ such that for every $x,y\in Y$ such that $\norm{x}_Y,\norm{y}_Y\leq R$  we have $ \sup_{t\in\mathbb{R}}\norm{f(t,x) - f(t,y)}_{W}\leq C(R) \norm{x-y}_{Y}.$
    \item [(BH)]\label{as:BH} The space $W$ is continuously and densely embedded in some Hilbert space $H$ with the scalar product $\skalarProduct{\cdot}{\cdot}$ and orthogonal basis $\{e_i\}_{i\in\mathbb{N}}$ such that $e_i\in D(L)$ for every $i\in\mathbb{N}.$
    \item [(BD)]\label{as:BD} The operator $L$ is diagonal, that is $Le_i = \lambda_i e_i$ for $\lambda_i\not =0$ and the semigroup satisfies $e^{Lt}e_i =e^{\lambda_i t} e_i.$
\end{itemize}
If the conditions \hyperref[as:BH]{(BH)},\hyperref[as:BD]{(BD)} are satisfied, then for a given
 $x\in Y$, we use following the notation:
 \begin{equation*}
     x_i = \frac{\skalarProduct{e_i}{x}}{\skalarProduct{e_i}{e_i}},
     \quad
     f_i(x) = \frac{\skalarProduct{e_i}{f(x)}}{\skalarProduct{e_i}{e_i}},
     \quad
     F_i(x) = \lambda_ix_i +f_i(x). 
 \end{equation*}
 
\begin{lemma}
    If $W=Y$ then the condition  \hyperref[as:BL1]{(BL1)} implies the condition
     \hyperref[as:BL2]{(BL2)}.
\end{lemma}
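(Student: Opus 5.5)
We will take $\gamma=0$ in \hyperref[as:BL2]{(BL2)} and read off the required estimate directly from \hyperref[as:BL1]{(BL1)}. When $W=Y$, the extension of the semigroup to $W$ asked for in \hyperref[as:BL2]{(BL2)} is just the semigroup $e^{tL}$ itself, which is already a $C_0$ semigroup on $Y=W$ by the standing assumptions of Section \ref{sec21}. So the extension part needs no work, and only the smoothing-type bound must be checked.

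For that bound, fix $z\in W=Y$ and $t>0$. By \hyperref[as:BL1]{(BL1)},
\begin{equation*}
\norm{e^{tL}z}_Y\leq \norm{e^{tL}}_Y\norm{z}_Y\leq M_1e^{\kappa_1 t}\norm{z}_W .
\end{equation*}
Setting $M_2:=M_1$, $\kappa_2:=\kappa_1$ and $\gamma:=0\in[0,1)$, this is exactly the inequality $\norm{e^{tL}z}_Y\leq M_2e^{\kappa_2 t}t^{-\gamma}\norm{z}_W$ for all $t\in(0,\infty)$. This is the form required in \hyperref[as:BL2]{(BL2)}, since $t^{-0}=1$.

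There is no real obstacle; the argument is a one-line reduction. The only point to note is that \hyperref[as:BL2]{(BL2)} allows $\gamma=0$, so no singular factor $t^{-\gamma}$ has to be produced. Should a definition require $W$ to carry a norm different from $Y$, the bound would still follow, with a constant factor coming from the equivalence of the norms.
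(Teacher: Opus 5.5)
Your proof is correct. The paper states this lemma without proof, and your argument is the immediate one it leaves implicit: the extension to $W=Y$ is the semigroup itself, and (BL1) gives the (BL2) bound with $\gamma=0$, $M_2=M_1$, $\kappa_2=\kappa_1$.
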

\begin{definition}\label{def:DuhamelSolution}
We call the function $x:[t_0,t_0+T]\to Y$ the mild solution, Duhamel solution, or simply solution to the problem \eqref{eq:AbstractProblem} if it satisfies the formula
\begin{equation*}
    x(t) = e^{L(t-t_0)}x^0 + \int_{t_0}^{t}e^{L(t-s)}f(s,x(s))ds,
\end{equation*}
where the equality in $Y$ is supposed to hold for every $t\in [t_0,t_0+T].$
\end{definition}

The following lemma provides the criteria for the local in time existence and uniqueness of mild solutions to problem \eqref{eq:AbstractProblem} (see, e.g., 
\cite[Lemma 2.2]{BrusselatorPrzyjetaPraca}). 
\begin{lemma}\label{lem:AbstractProblemLocalExistence}
Assume that conditions \hyperref[as:BL1]{(BL1)}, \hyperref[as:BL2]{(BL2)}, \hyperref[as:Bf]{(Bf)} are satisfied.
Then for every initial data $x^0\in Y$ and $t_0\in\mathbb{R}$  there exists a unique local in time solution to problem $x:[t_0,t_0+T]\to Y$ in the sense of Definition \ref{def:DuhamelSolution}, where $T$ may depend on $t_0$ and $x^{0}$.  
\end{lemma}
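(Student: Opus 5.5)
We prove Lemma \ref{lem:AbstractProblemLocalExistence} by a Banach fixed point argument applied to the Duhamel operator from Definition \ref{def:DuhamelSolution}. Fix $t_0\in\mathbb{R}$ and $x^0\in Y$. Put $R:=2M_1e^{\kappa_1}\norm{x^0}_Y+1$, and consider the closed ball
\begin{equation*}
\mathcal{B}_T=\left\{x\in C([t_0,t_0+T];Y):\ \sup_{t}\norm{x(t)}_Y\leq R\right\},\qquad T\leq 1,
\end{equation*}
with the sup metric. On it we define
\begin{equation*}
(\Phi x)(t)=e^{L(t-t_0)}x^0+\int_{t_0}^{t}e^{L(t-s)}f(s,x(s))\,ds .
\end{equation*}
The integral is understood in $W$, and its value is shown to lie in $Y$ using (BL2). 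Since $\sup_t\norm{f(t,0)}_W$ need not be finite a priori, we work with $f_0:=\sup_{s\in[t_0,t_0+1]}\norm{f(s,0)}_W$. This is finite by continuity of $f$ on a compact time interval.

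The estimates come first. By (BL2) and (Bf), for $x\in\mathcal{B}_T$ we have $\norm{f(s,x(s))}_W\leq C(R)R+f_0$. Therefore
\begin{equation*}
\norm{(\Phi x)(t)}_Y\leq M_1e^{\kappa_1}\norm{x^0}_Y+M_2e^{\kappa_2}\bigl(C(R)R+f_0\bigr)\frac{T^{1-\gamma}}{1-\gamma},
\end{equation*}
using $\int_{t_0}^t (t-s)^{-\gamma}ds=(t-t_0)^{1-\gamma}/(1-\gamma)$ and $\gamma<1$. Similarly,
\begin{equation*}
\sup_t\norm{\Phi x(t)-\Phi y(t)}_Y\leq M_2e^{\kappa_2}C(R)\frac{T^{1-\gamma}}{1-\gamma}\sup_t\norm{x(t)-y(t)}_Y .
\end{equation*}
Choosing $T$ small makes $\Phi$ map $\mathcal{B}_T$ into itself and a contraction with constant $\frac12$. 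Banach's theorem then gives a fixed point, which is a mild solution.

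The main obstacle is showing that $\Phi x$ is actually continuous into $Y$, so that $\Phi$ maps into $C([t_0,t_0+T];Y)$. Continuity of $t\mapsto e^{L(t-t_0)}x^0$ in $Y$ follows from the $C_0$ property. For the integral term $I(t)$, take $t'>t$ and split
\begin{equation*}
I(t')-I(t)=\int_t^{t'}e^{L(t'-s)}f\,ds+\int_{t_0}^{t}\bigl(e^{L(t'-t)}-\mathrm{Id}\bigr)e^{L(t-s)}f\,ds .
\end{equation*}
The first term is $O\bigl((t'-t)^{1-\gamma}\bigr)$ by (BL2). For the second term, note that for fixed $s<t$ the vector $e^{L(t-s)}f(s,x(s))$ lies in $Y$, so strong continuity of the semigroup on $Y$ gives pointwise convergence to $0$. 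The integrand is dominated by $(M_1e^{\kappa_1}+1)M_2e^{\kappa_2}(t-s)^{-\gamma}\cdot\mathrm{const}$, which is integrable, so dominated convergence finishes this term. Left limits are handled symmetrically, or by the same splitting with the roles of $t$ and $t'$ exchanged. Strong measurability of the integrand is inherited from the continuity of $f$ and of the semigroup on $W$.

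Uniqueness among all mild solutions, not only those in $\mathcal{B}_T$, follows from a singular Gronwall argument. Suppose $x,y$ are two solutions on a common interval. Both are continuous, hence bounded by some $R'$. Then $u(t)=\norm{x(t)-y(t)}_Y$ satisfies
\begin{equation*}
u(t)\leq M_2e^{\kappa_2}C(R')\int_{t_0}^t (t-s)^{-\gamma}u(s)\,ds .
\end{equation*}
Iterating this inequality, or applying the contraction estimate on short subintervals, gives $u\equiv0$. The existence time $T$ depends only on $\norm{x^0}_Y$ and on $f_0$, that is, on $t_0$ and $x^0$, as stated in the lemma.
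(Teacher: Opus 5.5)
Your proof is correct and follows essentially the same route as the paper: a Banach fixed-point argument for the Duhamel operator on a closed ball in $C([t_0,t_0+T];Y)$ with the sup norm. As in the paper, (BL1), (BL2) and (Bf) give the self-mapping and contraction estimates for small $T$, with $\sup_{s\in[t_0,t_0+1]}\norm{f(s,0)}_W$ controlling the inhomogeneous part. Your extra checks, that $\Phi x$ is continuous into $Y$ and that uniqueness holds among all mild solutions via the singular Gronwall inequality, fill in points the paper's argument leaves implicit.
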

\Doktorat{
\begin{proof}
For a given $t_0\in\mathbb{R},$ $x^{0} \in Y$ and $\delta>0$ consider the set 
\begin{equation*}
    S_\delta:= \left\{y\in C([t_0,t_0+\delta];Y):y(t_0) = x^{0}\text{ for every}\;t\in[t_0,t_0+\delta] \text{ we have }\norm{y(t)}_Y\leq 1+M_1\norm{x^{0} }_Y  \right\},
\end{equation*}
and define the mapping  $\mathcal{T}:C([t_0,t_0+\delta];Y) \to C([t_0,t_0+\delta];Y)$ by the formula 
\begin{equation*}
    \mathcal{T}(y)(t) = e^{L(t-t_0)}x^{0} + \int_{t_0}^te^{L(t-s)}f(s,y(s))ds. 
\end{equation*}
The space $C([t_0,t_0+\delta];Y)$ is equipped with the norm $\sup_{t\in[0,\delta]}\norm{y(t)}_{Y}$.
From the assumptions \hyperref[as:BL1]{(BL1)}, \hyperref[as:BL2]{(BL2)}, \hyperref[as:Bf]{(Bf)} we have
\begin{align*}
    \norm{\mathcal{T}(y)(t)}_Y
    &\leq
    M_1e^{\kappa_1(t-t_0)}\norm{x^{0} }_{Y}+
    \int_{t_0}^t\norm{e^{L(t-s)} f(s,y(s))}_Y ds
    \\&\leq
    M_1e^{\kappa_1(t-t_0)}\norm{x^{0} }_{Y}+
    M_2\int_{t_0}^t e^{(t-s)\kappa_2}\frac{1}{(t-s)^{\gamma}}\norm{ f(s,y(s))}_{W} ds
    \\&\leq
     M_1e^{\kappa_1(t-t_0)}\norm{x^{0} }_{Y}+
    M_2 e^{(t-t_0)\kappa_2}\int_{0}^t\frac{1}{(t-s)^{\gamma}} ds
     \sup_{s\in[t_0,t_0+\delta]}\norm{f(s,y(s))}_{W}
     \\&\leq
     M_1e^{\kappa_1(t-t_0)}\norm{x^{0} }_{Y}+
     \frac{ M_2(t-t_0)^{1-\gamma}}{1-\gamma}
     e^{(t-t_0)\kappa_2}
    \left(C(R)\sup_{s\in[t_0,t_0+\delta]}\norm{y(s)}_Y+\norm{f(s,0)}_{W} \right),
\end{align*}
where $R = 1+M_1\norm{x^{0} }_Y.$
If we pick $\delta\in(0,1)$ such that
\begin{equation*}
  e^{\kappa_1\delta}\leq 1+\frac{1}{2M_1\norm{x^{0}     }_Y}
  \quad\text{and}\quad
    \delta^{1-\gamma}e^{\delta  \kappa_2}\leq \frac{1-\gamma}{2M_2\left(C(R)R+\sup_{s\in[t_0,t_0+1]}\norm{f(s,0)}_{W} \right)},
\end{equation*}
then we have $\mathcal{T}(S_\delta) \subset S_\delta.$ Moreover, we have 
\begin{align*}
  \norm{\mathcal{T}(y_1)(t)-\mathcal{T}(y_2)(t)}_Y \leq  M_2e^{(t-t_0)\kappa_2}C(R)\frac{(t-t_0)^{1-\gamma}}{1-\gamma}
  \sup_{s\in[0,t]} \norm{y_1(s)-y_2(s)}_Y.
\end{align*}
If we take $\delta>0$ such that
\begin{equation*}
    \delta^{1-\gamma}e^{\delta \kappa_2}\leq \frac{1-\gamma}{2M_2C(R)},
\end{equation*}
the mapping $\mathcal{T}$ is a contraction on the set $S_\delta.$ From the Banach fixed point theorem we deduce that $\mathcal{T}$ has a unique fixed point which is a solution to \eqref{eq:AbstractProblem}.
\end{proof}
}
The next lemma provides a result on saturated solutions. 
\begin{lemma}\label{re:procesCon}
Assume that conditions \hyperref[as:BL1]{(BL1)}, \hyperref[as:BL2]{(BL2)}, \hyperref[as:Bf]{(Bf)} are satisfied.
For every $t_0\in\mathbb{R},$ $x^{0}\in Y$ there exists $t_{max}(t_0,x^{0})\in(t_0,\infty]$ such that,
for the
unique solution $x:[t_0,t_{max}(t_0,x^{0}))\rightarrow Y$ to \eqref{eq:AbstractProblem}, the interval $[t_0,t_{max}(t_0,x^{0}))$ is the maximal interval of existence of this solution. Additionally if we consider set set 
$\Omega :=\{(t,t_0,x^{0})\in \R\times\R \times Y: t\in[t_0,t_{max}(t_0,x^{0})) \} $ then the function $\varphi:\Omega\to Y$ given by  formula  
\begin{equation}
    \varphi(t,t_0,x^{0}) = e^{L(t-t_0)}x^{0} + \int_{t_0}^t e^{L(t-s)} f(s,x(s))\, ds.
\end{equation}
defines a local process.
\end{lemma}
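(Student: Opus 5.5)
The plan is the standard continuation argument built on Lemma \ref{lem:AbstractProblemLocalExistence}. Fix $t_0$ and $x^0$. Let $\mathcal{S}$ be the set of all $T>t_0$ for which a mild solution exists on $[t_0,T]$, and set $t_{max}(t_0,x^0):=\sup\mathcal{S}$. This lies in $(t_0,\infty]$ because Lemma \ref{lem:AbstractProblemLocalExistence} makes $\mathcal{S}$ nonempty.

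\textbf{Uniqueness and gluing.} First I show that any two mild solutions $x$ on $[t_0,T_1]$ and $y$ on $[t_0,T_2]$ agree on $[t_0,\min(T_1,T_2)]$. Let $\tau$ be the supremum of the times up to which they coincide; by continuity they agree on $[t_0,\tau]$. Suppose $\tau<\min(T_1,T_2)$. Both solutions are continuous, so they are bounded in $Y$ by some $R$ on $[\tau,\tau+\epsilon]$. Subtracting the Duhamel formulas and using the semigroup property to restart at time $\tau$ gives
\begin{equation*}
\norm{x(t)-y(t)}_Y\leq M_2e^{\kappa_2\epsilon}C(R)\frac{\epsilon^{1-\gamma}}{1-\gamma}\sup_{s\in[\tau,\tau+\epsilon]}\norm{x(s)-y(s)}_Y ,
\end{equation*}
by (BL2) and (Bf). For small $\epsilon$ the constant is below $1$, which forces $x=y$ on $[\tau,\tau+\epsilon]$ and contradicts the choice of $\tau$.

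The restarting step needs one small check. A mild solution on $[t_0,T]$, restricted to $[\tau,T]$, is a mild solution starting from $x(\tau)$ at time $\tau$. This follows by writing $e^{L(t-t_0)}=e^{L(t-\tau)}e^{L(\tau-t_0)}$ and splitting the integral at $\tau$. Conversely, concatenating a mild solution on $[t_0,\tau]$ with a mild solution from $x(\tau)$ on $[\tau,T']$ gives a mild solution on $[t_0,T']$; the same computation shows this.

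\textbf{Maximal solution.} Uniqueness lets me define a single function $x$ on $[t_0,t_{max})$ by taking, at each $t$, the value of any solution defined beyond $t$. This $x$ satisfies the Duhamel formula, since each instance of that formula involves only a compact interval $[t_0,t]$. Maximality holds because $x$ cannot be a solution on any $[t_0,T]$ with $T\geq t_{max}$, by the definition of $t_{max}$. Indeed, if a solution existed on the closed interval $[t_0,t_{max}]$, applying Lemma \ref{lem:AbstractProblemLocalExistence} at $(t_{max},x(t_{max}))$ and gluing would extend it past $t_{max}$, contradicting the supremum.

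\textbf{Process property.} Finally I verify that $\varphi$ defines a local process. Directly from the Duhamel formula, $\varphi(t_0,t_0,x^0)=x^0$. The cocycle identity is
\begin{equation*}
\varphi(t,s,\varphi(s,t_0,x^0))=\varphi(t,t_0,x^0), \qquad t_0\le s\le t<t_{max}(t_0,x^0),
\end{equation*}
together with $t_{max}(s,\varphi(s,t_0,x^0))=t_{max}(t_0,x^0)$. Both follow from the restriction and concatenation observations combined with uniqueness.

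I expect no serious obstacle here. The most delicate point is the uniqueness and restart step: the Lipschitz constant $C(R)$ must be tied to a uniform $Y$-bound on the relevant compact time interval, and the singular kernel $t^{-\gamma}$ with $\gamma<1$ must be handled so that it yields the small factor $\epsilon^{1-\gamma}$. If continuity of $\varphi$ in its arguments is also required by the definition of a local process, I would obtain it from a Gronwall-type argument with the singular kernel on compact subintervals.
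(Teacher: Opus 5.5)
The paper gives no proof of this lemma; it treats it as a standard consequence of the local existence result, Lemma \ref{lem:AbstractProblemLocalExistence}. Your continuation argument is the standard route the paper implicitly relies on, and it is correct:
\begin{itemize}
\item uniqueness comes from the small factor $\epsilon^{1-\gamma}$ supplied by (BL2) and (Bf);
\item restriction and gluing follow from the semigroup property;
\item $t_{max}$ is a supremum that cannot be attained;
\item the cocycle identity follows from restriction, concatenation and uniqueness.
\end{itemize}

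The only thing I would tighten is your closing remark on continuity. A singular Gronwall estimate on $[t_0,T]$ compares two solutions that are already known to exist there. To show that the solution from $x^0+h$ actually survives up to $T$ (which is what gives openness of $\Omega$ and continuity in $x^0$), you also need a blow-up alternative. That alternative in turn requires observing that the existence time in Lemma \ref{lem:AbstractProblemLocalExistence} depends only on a bound for $\norm{x^0}_Y$ and on a compact time window. Alternatively, you can run a contraction directly on the whole interval $[t_0,t_0+T]$, which is how the paper obtains continuous dependence in Lemma \ref{rem:LongTimeExistence}.
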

\Doktorat{
\begin{lemma}\label{re:semiDynamicalCon}
Assume that conditions \hyperref[as:BL1]{(BL1)}, \hyperref[as:BL2]{(BL2)}, \hyperref[as:Bf]{(Bf)} hold and that $f$ does not depend on $t$ i.e.  $f(t,x) = f(x)$.
Then for $x^{0}\in Y$ there exist $t_{max}(x^{0})\in(0,\infty]$ such that,
for the
unique solution $x:[0,t_{max}(x^{0}))\to Y$ to \eqref{eq:AbstractProblem}, the interval $[0,t_{max}(x^{0}))$ is the maximal interval of existence of this solution. Additionally if we consider set set 
$\Omega :=\{(t,x^{0})\in \R \times Y: t\in[0,t_{max}(x^{0})) \} $ then the function $\varphi:\Omega\to Y$ given by  formula  
\begin{equation}
    \varphi(t,x^{0}) = e^{Lt}x^{0} + \int_{0}^t e^{L(t-s)} f(s,x(s))\, ds.
\end{equation}
defines a local dynamical system.
\end{lemma}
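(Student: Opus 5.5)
The statement is the autonomous specialisation of Lemma \ref{re:procesCon}, so I would deduce it from that lemma together with Lemma \ref{lem:AbstractProblemLocalExistence}, adding only the time-translation invariance that $f(t,x)=f(x)$ provides. The plan has three steps: existence of a maximal solution, independence of $t_{max}$ from the initial time, and the semigroup (local dynamical system) property.

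\emph{Step 1: maximal solution.} Set $t_0=0$. By Lemma \ref{lem:AbstractProblemLocalExistence}, every $x^0\in Y$ has a unique mild solution on some $[0,T]$. Let $t_{max}(x^0)$ be the supremum of all $T$ for which a mild solution on $[0,T]$ exists.

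Uniqueness on common intervals lets the solutions glue into a single solution on $[0,t_{max}(x^0))$. To prove uniqueness, take two solutions on $[0,T]$. On that compact interval both stay in a ball of radius $R$, so (Bf) and (BL2) give
\begin{equation*}
\norm{x(t)-y(t)}_Y\le M_2e^{\kappa_2 T}C(R)\int_0^t (t-s)^{-\gamma}\norm{x(s)-y(s)}_Y\,ds .
\end{equation*}
The singular Gronwall inequality (or iterating the contraction estimate from the proof of Lemma \ref{lem:AbstractProblemLocalExistence} on short subintervals) then forces $x=y$. Maximality holds by construction.

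\emph{Step 2: translation invariance.} For $t_0\in\mathbb{R}$, put $\tilde x(t)=x(t-t_0)$. Substituting $s\mapsto s-t_0$ in the Duhamel formula shows that $\tilde x$ is the mild solution starting at time $t_0$. Hence $t_{max}(t_0,x^0)=t_0+t_{max}(x^0)$ in Lemma \ref{re:procesCon}, and the process satisfies $\varphi(t,t_0,x^0)=\varphi(t-t_0,0,x^0)$. This gives a well-defined map $\varphi(t,x^0)$ on $\Omega$.

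\emph{Step 3: dynamical system properties.} The identity $\varphi(0,x^0)=x^0$ is immediate. For the semigroup law, fix $s<t_{max}(x^0)$ and define $z(\tau)=\varphi(s+\tau,x^0)$. Splitting the Duhamel integral at $s$ and using $e^{L(\tau+s-r)}=e^{L\tau}e^{L(s-r)}$ shows that $z$ is a mild solution with initial datum $\varphi(s,x^0)$. Uniqueness from Step 1 then gives
\begin{equation*}
\varphi(\tau,\varphi(s,x^0))=\varphi(s+\tau,x^0), \qquad t_{max}(\varphi(s,x^0))=t_{max}(x^0)-s,
\end{equation*}
where the second identity follows from maximality in both directions.

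If continuity of $\varphi$ on $\Omega$ is part of the definition, I would obtain it as in Lemma \ref{re:procesCon}. The Gronwall-type estimate gives Lipschitz dependence on $x^0$ on compact time intervals, and combined with strong continuity of the semigroup this gives joint continuity. The same estimate shows that $\Omega$ is open, i.e.\ that $t_{max}$ is lower semicontinuous.

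I expect the main obstacle to be continuity and openness of $\Omega$, specifically the uniform control near $t_{max}$ that it requires. Everything else is a direct consequence of uniqueness and of the substitution in Step 2.
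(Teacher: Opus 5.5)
The paper gives no proof of this lemma. It is stated, in the thesis-only material, as the autonomous counterpart of Lemma \ref{re:procesCon}, which is itself stated without proof as a standard consequence of Lemma \ref{lem:AbstractProblemLocalExistence}. Your reduction is the standard argument and is correct. It consists of uniqueness of mild solutions by the singular Gronwall argument, gluing into a maximal solution, time-translation invariance giving $t_{max}(t_0,x^0)=t_0+t_{max}(x^0)$, and the semigroup law obtained by splitting the Duhamel integral at $s$.

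Two small points need tightening.

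\emph{Maximality.} ``Maximality holds by construction'' should include the remark that the supremum is not attained. If a solution existed on $[0,t_{max}(x^0)]$ with $t_{max}(x^0)<\infty$, apply Lemma \ref{lem:AbstractProblemLocalExistence} at $x(t_{max}(x^0))$ and concatenate, using the same Duhamel splitting as in your Step 3. This extends the solution and contradicts the definition of the supremum. The same concatenation is what gives the inequality $t_{max}(\varphi(s,x^0))\le t_{max}(x^0)-s$ in your ``maximality in both directions''.

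\emph{Continuity and openness.} These do not require any control near $t_{max}$, so your expected obstacle is misdiagnosed. On the other hand, a Gronwall estimate on the difference alone does not give them, because it presupposes that the perturbed solution exists up to the time in question. The direct route is as follows.
\begin{itemize}
\item For $(t,x^0)\in\Omega$, fix $T\in(t,t_{max}(x^0))$ and let $R$ bound $\norm{\varphi(\tau,x^0)}_Y$ on $[0,T]$.
\item Apply Lemma \ref{rem:LongTimeExistence} with $t_0=0$. It constructs, by a contraction, the solution starting from every $x$ with $\norm{x-x^0}_Y<\lambda_1(R,T)$ on all of $[0,T]$.
\item The same lemma gives $\norm{\varphi(\tau,x)-\varphi(\tau,x^0)}_Y\le C_1(R,T)\norm{x-x^0}_Y$ for $\tau\in[0,T]$.
\item This yields openness of $\Omega$ relative to $[0,\infty)\times Y$. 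Together with continuity of $\tau\mapsto\varphi(\tau,x^0)$, it also yields joint continuity of $\varphi$.
\end{itemize}
Alternatively, your Gronwall bootstrap works once you add the blow-up alternative. That alternative holds because, for autonomous $f$, the contraction argument behind Lemma \ref{lem:AbstractProblemLocalExistence} gives an existence time depending only on an upper bound for $\norm{x^0}_Y$.
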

}
The following result concerns the evolution of the solution's Fourier coefficients. 
\begin{lemma}\label{lem:AbstractFourierCoef}
Let $x:[t_0,t_0+T]\to Y$ satisfy \eqref{eq:AbstractProblem}. 
Assume that conditions \hyperref[as:BL1]{(BL1)}, \hyperref[as:BL2]{(BL2)}, \hyperref[as:Bf]{(Bf)}, \hyperref[as:BH]{(BH)}, \hyperref[as:BD]{(BD)} are satisfied.
Then for every $i\in\mathbb{N},$ the Fourier coefficients $x_i(t)$ satisfy the following non-autonomous ODE
\begin{equation*}
    \frac{dx_i}{dt}(t) = \lambda_ix_i(t)+f_i(t,x(t))
    \quad \text{for every $t\in(t_0,t_0+T)$}.
\end{equation*}
\end{lemma}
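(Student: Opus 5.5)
We take the inner product of the Duhamel formula from Definition \ref{def:DuhamelSolution} with $e_i$, solve for $x_i(t)$ explicitly, and then differentiate. Fix $i\in\mathbb{N}$. By \hyperref[as:BH]{(BH)} the maps $Y\hookrightarrow W\hookrightarrow H$ are continuous, so $z\mapsto \skalarProduct{e_i}{z}/\skalarProduct{e_i}{e_i}$ is a bounded linear functional on $Y$ and on $W$. Applying it to
\begin{equation*}
x(t) = e^{L(t-t_0)}x^0 + \int_{t_0}^{t}e^{L(t-s)}f(s,x(s))\,ds,
\end{equation*}
it commutes with the Bochner integral. The integral converges in $Y$: by \hyperref[as:BL2]{(BL2)} the integrand is bounded by $M_2e^{\kappa_2(t-s)}(t-s)^{-\gamma}\sup_s\norm{f(s,x(s))}_W$, which is integrable since $\gamma<1$, and the supremum is finite by continuity of $f$ and of $x$ on a compact interval.

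Next we need the $i$-th coefficient of $e^{\tau L}z$ for $z\in W$, and we claim it equals $e^{\lambda_i\tau}z_i$. For $z=e_j$ this follows from \hyperref[as:BD]{(BD)} and orthogonality of the basis. It then extends to finite linear combinations of the $e_j$, and by density and continuity to all of $W$. This density/continuity step is the main obstacle, and we must be careful with it. Orthogonality gives $z_i=0$ whenever $z$ is a finite combination of $e_j$ with $j\neq i$. To obtain $(e^{\tau L}z)_i=e^{\lambda_i\tau}z_i$ for every $z\in W$, we need the linear span of $\{e_j\}$ to be dense in $W$, or at least that the functional $z\mapsto (e^{\tau L}z)_i$ is continuous on $W$ and agrees with $e^{\lambda_i\tau}z_i$ on a dense set. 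Continuity holds by \hyperref[as:BL2]{(BL2)} and the embedding $Y\hookrightarrow H$.

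If density of the span in $W$ is not available, we would use a duality argument instead. Since $L e_i=\lambda_i e_i$, the function $\tau\mapsto \skalarProduct{e_i}{e^{\tau L}z}$ satisfies the scalar ODE $\frac{d}{d\tau}(\cdot)=\lambda_i(\cdot)$. To see this, first take $z\in D(L)$ and use the generator property. Then one would need $\skalarProduct{e_i}{Lw}=\lambda_i\skalarProduct{e_i}{w}$, i.e. a symmetry of $L$ relative to $e_i$, which holds in the intended applications where $L$ is self-adjoint on $H$. Finally pass to general $z$ by density of $D(L)$ in $W$ and continuity. We would state whichever of these is implicitly meant by \hyperref[as:BD]{(BD)}.

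With the claim in hand we get
\begin{equation*}
x_i(t)=e^{\lambda_i(t-t_0)}x_i^0+\int_{t_0}^t e^{\lambda_i(t-s)}f_i(s,x(s))\,ds .
\end{equation*}
The function $s\mapsto f_i(s,x(s))$ is continuous, because $f$ is continuous, $x$ is continuous into $Y$, and the functional is bounded on $W$. By the fundamental theorem of calculus the right-hand side is then $C^1$ on $(t_0,t_0+T)$, with derivative $\lambda_i x_i(t)+f_i(t,x(t))$, which is the claimed ODE.
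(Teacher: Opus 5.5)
Your proposal is correct and follows the paper's proof. You project the Duhamel formula onto $e_i$ to get $x_i(t)=e^{\lambda_i(t-t_0)}x_i^0+\int_{t_0}^t e^{\lambda_i(t-s)}f_i(s,x(s))\,ds$, use continuity of $f$ together with the embedding in (BH) to see that the integrand is continuous, and differentiate. The paper simply reads $(e^{\tau L}z)_i=e^{\lambda_i\tau}z_i$ into (BD); your justification of it, via density of $\mathrm{span}\{e_j\}$ in $W$ (true for $C_0(0,\pi)$ and $H^1_{odd}(-\pi,\pi)$) or via symmetry of $L$ on $H$, supplies a step the paper leaves implicit without changing the argument.
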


\Doktorat{
\begin{proof}
If $x(t)$ satisfies \eqref{eq:AbstractProblem} then for every  $i\in\mathbb{N}$ we have
\begin{equation}\label{eq:formula1}
    x_i(t) = e^{\lambda_i(t-t_0) }x^0_i+
    \int_{t_0}^te^{\lambda_i(t-s)} f_i(s,x(s))ds.
\end{equation}

Observe that as the condition \hyperref[as:BH]{(BH)} is satisfied and $f:\mathbb{R}\times Y\to W,$ is continuous, then also for every $i\in\mathbb{N}$ the function $f_i:\mathbb{R}\times Y\to\mathbb{R}$ is continuous. Therefore in formula \eqref{eq:formula1} the function under integral is also continuous.
Hence, we can differentiate this formula and get the ODE for $i$-th Fourier coefficient.
\end{proof}
}
We prove the result that will later be useful in the study of the variational equation. 
\begin{lemma}\label{rem:LongTimeExistence}
Assume that conditions \hyperref[as:BL1]{(BL1)}, \hyperref[as:BL2]{(BL2)}, \hyperref[as:Bf]{(Bf)} are satisfied.
For every $R>0,T>0$ there exist constants $C_1(R,T),\;\lambda_1(R,T)>0$ such that for every $t_0\in\mathbb{R},x^{0}\in Y$,
for which the local process $\varphi$ corresponding to equation \eqref{eq:AbstractProblem} is well defined for $(T+t_0,t_0,x^{0})$  and satisfies
\begin{equation*}
    \norm{\varphi(\tau+t_0,t_0,x^{{0}})}_Y\leq R \text{ for every }\tau\in[0,T],
\end{equation*}
 we have:
\begin{itemize}
    \item For every $h^0\in B_Y(0,\lambda_1(R,T))$ local process is well defined for  $(T+t_0,t_0,x^{0}+h^0).$ 
    \item For every $\tau\in[0,T]$ and $h^0\in B_Y(0,\lambda_1(R,T)),$ the following inequality holds
\begin{equation*}
    \norm{\varphi(\tau+t_0,t_0,x^0+h^0)-\varphi(\tau+t_0,t_0,x^0)}_Y\leq \min\{ C_1(R,T)\norm{h^0}_Y,1 \}.   
\end{equation*}
\end{itemize}
\end{lemma}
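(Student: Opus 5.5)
We compare the two mild solutions through the Duhamel formula of Definition \ref{def:DuhamelSolution}. The nonlinearity is only locally Lipschitz by \hyperref[as:Bf]{(Bf)}, so we confine the perturbed solution to a ball of radius $R+1$. On that ball the Lipschitz constant $C(R+1)$ is available, and a singular Gronwall inequality gives the bound; a continuation argument then shows the perturbed solution never leaves the ball.

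Fix $R,T$, set $\tilde R = R+1$, and write $x(t)=\varphi(t,t_0,x^0)$ and $y(t)=\varphi(t,t_0,x^0+h^0)$. Let $y$ live on its maximal interval $[t_0,t_{max}(t_0,x^0+h^0))$, which exists by Lemma \ref{re:procesCon}. Define
\[
\tau^*=\sup\{\tau\in[0,T]:\ t_0+\tau<t_{max}(t_0,x^0+h^0) \text{ and } \norm{y(t_0+s)-x(t_0+s)}_Y\le 1 \text{ for } s\in[0,\tau]\}.
\]
If $\norm{h^0}_Y<1$, continuity of $y$ gives $\tau^*>0$. For $s\in[0,\tau^*)$ both $x$ and $y$ lie in the ball of radius $\tilde R$. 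Subtracting the two Duhamel formulas and using \hyperref[as:BL1]{(BL1)}, \hyperref[as:BL2]{(BL2)} and \hyperref[as:Bf]{(Bf)}, the difference $u(\tau)=\norm{y(t_0+\tau)-x(t_0+\tau)}_Y$ satisfies
\[
u(\tau)\le M_1e^{\kappa_1 T}\norm{h^0}_Y + M_2e^{\kappa_2 T}C(\tilde R)\int_0^\tau (\tau-s)^{-\gamma}u(s)\,ds .
\]

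The main step is a Gronwall lemma with weakly singular kernel (Henry's inequality). It yields $u(\tau)\le C_1(R,T)\norm{h^0}_Y$. The constant $C_1$ depends only on $M_1,M_2,\kappa_1,\kappa_2,\gamma,C(\tilde R)$ and $T$; one may take $C_1 = M_1e^{\kappa_1T}E_{1-\gamma}\big((M_2e^{\kappa_2T}C(\tilde R)\Gamma(1-\gamma))^{1/(1-\gamma)}T\big)$, where $E_{1-\gamma}$ is the Mittag-Leffler function. If we prefer to avoid citing Henry's inequality, we can split $[0,T]$ into finitely many subintervals of length $\delta$ with $M_2e^{\kappa_2T}C(\tilde R)\delta^{1-\gamma}/(1-\gamma)\le 1/2$, as in the proof of Lemma \ref{lem:AbstractProblemLocalExistence}. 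The sup-norm of $u$ then at most doubles, times a fixed constant, across each subinterval, which gives an explicit exponential-type constant. The splitting needs a little care because the convolution integral looks back over the whole past, but the earlier pieces are already bounded and can be absorbed into the forcing term.

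Now choose $\lambda_1(R,T)=\min\{1,1/(2C_1)\}$, and assume also $C_1\ge 1$ by enlarging it if necessary. For $\norm{h^0}_Y<\lambda_1$ we get $u\le 1/2$ on $[0,\tau^*)$. We then argue by contradiction to show that $\tau^*=T$ and that $y$ exists on all of $[t_0,t_0+T]$:
\begin{itemize}
\item If $t_0+\tau^*$ were at least $t_{max}$, then $y$ would stay bounded by $\tilde R$ up to $t_{max}$. This contradicts the blow-up alternative for maximal solutions; if Lemma \ref{re:procesCon} is not quoted with that alternative, it follows because Lemma \ref{lem:AbstractProblemLocalExistence} gives an existence time depending only on a bound for the initial norm, as its proof shows.
\item Otherwise, continuity would give $u\le 1/2<1$ slightly beyond $\tau^*$, contradicting the definition of $\tau^*$.
\end{itemize}
Hence both items hold, with the bound $\min\{C_1\norm{h^0}_Y,1\}$; the $1$ is automatic since $C_1\norm{h^0}_Y\le 1/2$. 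The main obstacle I expect is this continuation step: making sure the existence time in Lemma \ref{lem:AbstractProblemLocalExistence} is uniform in $t_0$ and in the norm of the initial datum. That uniformity holds because the constants in \hyperref[as:Bf]{(Bf)} are uniform in $t$ through the supremum over $t$, together with a bound on $\sup_t\norm{f(t,0)}_W$. If that bound is not assumed, we use $\norm{f(t,y)}_W\le \norm{f(t,x(t))}_W+C(\tilde R)\norm{y-x}_Y$ along the known solution $x$, which is bounded on the compact interval.
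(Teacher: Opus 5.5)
Your proof is correct, but it takes a different route from the paper. You first prove an a priori bound on the maximal interval of the perturbed solution, using Henry's weakly singular Gronwall inequality. You then use a continuation/blow-up argument to show that the solution survives to $t_0+T$.

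The paper obtains existence on the whole interval and the estimate in one step. It applies the Banach fixed point theorem to the Duhamel operator with datum $x^0+h^0$ directly on $[t_0,t_0+T]$. The operator is restricted to the tube $S_{M_3,\alpha}$ of continuous $y$ with $y(t_0)=x^0+h^0$ and $\norm{y(t)-x(t)}_Y\le\min\{M_3e^{\alpha(t-t_0)}\norm{h^0}_Y,1\}$.

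In that argument the exponential weight, with $\alpha$ supplied by Proposition~\ref{eq:integralfracExtEstimation}, does the work of your Gronwall lemma. With $M_3\ge 2M_1$, the linear term and the integral term each contribute at most half of $M_3e^{\alpha(t-t_0)}\norm{h^0}_Y$, so the tube is invariant. The smallness condition $\norm{h^0}_Y\le 1/(M_3e^{\alpha T})$ keeps the tube inside the unit neighbourhood of $x$, where $C(R+1)$ is a valid Lipschitz constant. Contraction then follows in a second weighted sup-norm. Uniqueness of mild solutions identifies the fixed point with $\varphi(\cdot,t_0,x^0+h^0)$, giving $C_1=M_3e^{\alpha T}$ and $\lambda_1=1/C_1$.

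The paper's route is self-contained. It needs neither the blow-up alternative, which Lemma~\ref{re:procesCon} does not actually state, nor the Mittag-Leffler machinery.

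Your route is the more standard a priori estimate plus continuation. It separates the estimate from the existence question and gives an explicit constant. The price is that you must justify the blow-up alternative yourself. You do this correctly: the local existence time is uniform for initial times in a compact interval and data in a bounded set. The only $t_0$-dependence enters through $\sup_s\norm{f(s,0)}_W$, which is finite on compact time intervals by continuity of $f$.

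Your sketched subinterval-splitting alternative is too vague as written. It is unnecessary given Henry's inequality, and the paper's weighted norm is the cleaner way to avoid citing that inequality.
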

\begin{proof}
We use the notation $x(t) = \varphi(t,t_0,x^0).$ 
We will show that the Duhamel operator 
\begin{equation*}
    \mathcal{T}(y)(t) = e^{L(t-t_0) }(x^0+h^0) + \int_{t^{0}}^t e^{L(t-s)}f(s,y(s))ds,
\end{equation*}
is a contraction. Consider the set 
\begin{equation*}
    S_{M_3,\alpha}:= \left \{y\in C([t_0,t_0+T];Y), 
    y(t_0) = h^0+x^{0}, 
    \norm{y(t)-x(t)}_Y\leq\min \left\{ M_3 e^{\alpha(t-t_0)}\norm{h^0}_Y,1 \right\}
    \right\}.
\end{equation*}

Firstly we show that 
$\mathcal{T}(S_{M_3,\alpha})\subset S_{M_3,\alpha}$ for some $M_3,\alpha >0$.
We have that 
\begin{equation*}
     \mathcal{T}(y)(t) - x(t)= e^{L(t-t_0)}h^0 +\int_{t_0}^te^{L(t-s)}(f(s,x(s))-f(s,y(s)))ds.
\end{equation*}
With the use of conditions \hyperref[as:BL1]{(BL1)}, \hyperref[as:BL2]{(BL2)}, \hyperref[as:Bf]{(Bf)} we estimate
\begin{align*}
    \norm{\mathcal{T}(y)(t) - x(t)}_Y &\leq 
    \norm{h^0}_Y\left(M_1e^{\kappa_1(t-t_0)} + C(R+1)M_3M_2e^{\kappa_2T}\int_{t_0}^{t}\frac{1}{(t-s)^\gamma} e^{\alpha (s-t_0)}ds\right). 
\end{align*}
We set  $M_3\geq 2 M_1.$  From \eqref{eq:integralfracExtEstimation} we can find $\alpha\geq\kappa_1$ such that
\begin{equation*}
    \int_{t_0}^{t}\frac{1}{(t-s)^\gamma} e^{\alpha (s-t_0)}ds 
    \leq \frac{e^{\alpha (t-t_0)}}{2C(R+1)M_2e^{\kappa T}}.
\end{equation*}
Then 
$\norm{\mathcal{T}(y)(t)-x(t)}_Y\leq M_3 e^{\alpha(t-t_0)}\norm{h^0}_Y.$
Observe that, if we take $h^0$ such that 
\begin{equation*}
\norm{h^0}_Y\leq \frac{1}{M_3 e^{\alpha T}},
\end{equation*}
then $\norm{\mathcal{T}(y)(t)-x(t)}_Y\leq 1$. Note that above bound on $h^0$ depends only on norm of the solution $x(t),$ the constants from assumptions \hyperref[as:BL1]{(BL1)}, \hyperref[as:BL2]{(BL2)}, \hyperref[as:Bf]{(Bf)} and $T$.
  To show that $\mathcal{T}$ is a contraction  we equip the space  $C([t_0,t_0+T];Y)$ with the  norm $\|y\|_{\beta} = \sup_{t\in[t_0,t_0+T]} \norm{y(t)}_Y e^{-\beta (t-t_0)},$ where $\beta>0$ is an appropriately chosen positive constant. For $y_1,y_2\in S$ we compute
\begin{align*}
     \left\|\int_{t_0}^t e^{L(t-s) }( f(s,y_2(s)) -f(s,y_1(s)))ds\right\|_Y
     &\leq M_2 e^{\kappa_2 T } \int_{t_0}^t \frac{1}{(t-s)^{\gamma}}
   \norm { f(s,y_1(s)) -f(s,y_2(s)))}_{W}  ds\\
      &\leq
      M_2e^{\kappa_2 T }C(R+1)
  \norm { y_1-y_2}_{\beta}
      \int_{t_0}^t\frac{1}{(t-s)^\gamma}e^{\beta(s -t_0)} ds
      .
\end{align*}
From \eqref{eq:integralfracExtEstimation} we can find $\beta$ such that
\begin{equation*}
    \int_{t_0}^t\frac{1}{(t-s)^\gamma}e^{\beta (s-t_0)}
    \leq \frac{e^{\beta (t-t_0)}}{2 M_2 e^{T\kappa_2}C(R+1)}.
\end{equation*}
Then for every $t\in[t_0,t_0+\tau]$ we have
\begin{equation*}
    \left\|\int_{t_0}^t e^{L(t-s) }( f(s,y_2(s)) -f(s,y_1(s)))ds\right\|_Y \leq \frac{1}{2}e^{(t-t_0)\beta}\norm{y_1-y_2}_{\beta}.
\end{equation*}
Hence, $\mathcal{T}$ is a contraction on the set $S$ equipped with the norm $\norm{\cdot}_\beta$. The Banach fixed point theorem gives the unique fixed point of the map $\mathcal{T}$, and the proof is complete.
\end{proof}

\subsection{The non-autonomous Chafee–Infante equation}\label{sec:nonautonomousChafee–InfanteEquation}
We discuss how to represent the non-autonomous Chafee–Infante problem in the abstract framework presented in this section. We consider the Hilbert space $H = L^2(0,\pi)$ and the Banach space $W=Y = C_0(0,\pi).$ 
In the space $H$ the system of functions  $e_k = \sin(kx)$
for $k\in \mathbb{N}$ is an orthogonal basis.  For $u\in L^2(0,\pi)$ we denote 
by $u_i$ the $i$-th coefficients in the Fourier expansion in the sine series of $u.$
We define the operator 
$$
L:D(L)\rightarrow Y \ \ \textrm{as}\ \ Lu = u_{xx} +\lambda u,$$ 
where $D(L) = \{u\in C_0(0,\pi)  \, :\  Lu \in Y\}$. The operator $L$ defines a $C_0$ semigroup on $Y.$ We denote it by $e^{tL}$\Doktorat{ 
(see Example \ref{ex:linearEquation2})}. We define $f:\mathbb{R} \times Y\to Y$ by the formula 
\begin{equation*}
    f(t,u) = -b(t)u^3 .
\end{equation*}
with $b(t)$   smooth enough and globally bounded.
We write the non‑autonomous Chafee--Infante equation \eqref{eq:Chafee-InfateNonautonomous} as the following abstract problem 
\begin{equation}\label{eq:AbstractChafeInfante}
    \begin{cases}
     \frac{d}{dt}u(t) = Lu(t) + f(t,u(t)),\\
      u(t_0) = u^{0}.
    \end{cases}
\end{equation} 

\Doktorat{
The proof the following Lemma is similar to the proof of Lemma \ref{lem:BrusselatorAbstractConditions}, so we skip it.}
The following lemma follows from the maximum principle for the heat equation and the form of nonlinear term.
\begin{lemma}\label{lem:ChafeInfanteAbstractConditions}
    The system \eqref{eq:AbstractChafeInfante} satisfies conditions \hyperref[as:BL1]{(BL1)}, \hyperref[as:BL2]{(BL2)}, \hyperref[as:Bf]{(Bf)}, \hyperref[as:BH]{(BH)}, \hyperref[as:BD]{(BD)}.
\end{lemma}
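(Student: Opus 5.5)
We verify the five conditions one at a time for $H=L^2(0,\pi)$, $W=Y=C_0(0,\pi)$, $e_k=\sin(kx)$ and $Lu=u_{xx}+\lambda u$.

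\textbf{(BH) and (BD).} These are immediate. $C_0(0,\pi)$ embeds continuously into $L^2(0,\pi)$, since $\norm{u}_{L^2}\leq\sqrt{\pi}\norm{u}_{C_0}$. The embedding is dense, because $C_c^\infty(0,\pi)\subset C_0(0,\pi)$ is dense in $L^2$. The family $\{\sin(kx)\}_{k\in\mathbb{N}}$ is an orthogonal basis of $L^2(0,\pi)$. Each $e_k$ is smooth and vanishes at $0,\pi$, so $e_k\in D(L)$. Moreover $Le_k=(\lambda-k^2)e_k$, so $\lambda_k=\lambda-k^2$, and we need $\lambda\neq k^2$. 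For the parameter values of Theorem \ref{th:Chaffe-InfanteMainTheorem} this holds ($\lambda\in\{2,5\}$); in general it is an assumption on $\lambda$. Finally, $e^{tL}e_k=e^{\lambda_k t}e_k$ holds because $v(t,x)=e^{\lambda_k t}\sin(kx)$ is the classical solution of the linear problem with this initial datum. By uniqueness of classical solutions, via the maximum principle, it coincides with the semigroup orbit.

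\textbf{(BL1) and (BL2).} Write $e^{tL}=e^{\lambda t}S(t)$, where $S(t)$ is the Dirichlet heat semigroup on $(0,\pi)$. We use the standard fact that $S(t)$ is a $C_0$ semigroup on $C_0(0,\pi)$. One way to see this is to extend $u$ oddly and $2\pi$-periodically and convolve with the Gaussian heat kernel. The extension lies in the space of uniformly continuous functions, convolution preserves oddness and periodicity, and strong continuity follows from uniform continuity. The maximum principle, or equivalently positivity of the kernel with total mass $1$, gives $\norm{S(t)u}_{C_0}\leq\norm{u}_{C_0}$. Hence (BL1) holds with $M_1=1$ and $\kappa_1=\lambda$. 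Since $W=Y$, (BL2) follows from the preceding lemma stating that (BL1) implies (BL2) when $W=Y$, with $\gamma=0$.

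\textbf{(Bf).} For $\norm{u}_{C_0},\norm{v}_{C_0}\leq R$ we have pointwise
\begin{equation*}
|u^3-v^3|=|u-v|\,|u^2+uv+v^2|\leq 3R^2|u-v|.
\end{equation*}
Hence $\sup_t\norm{f(t,u)-f(t,v)}_{C_0}\leq 3R^2\sup_t|b(t)|\,\norm{u-v}_{C_0}$, using that $b$ is globally bounded. We also check that $f$ maps into $C_0$: if $u$ is continuous with $u(0)=u(\pi)=0$, then $b(t)u^3$ is continuous and vanishes at the endpoints. Joint continuity of $f$ in $(t,u)$ follows from continuity of $b$ together with the Lipschitz bound.

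\textbf{Main obstacle.} The only step that is not routine is the $C_0$-semigroup property of the Dirichlet heat flow on $C_0(0,\pi)$, specifically strong continuity at $t=0$ and the identification of $D(L)$. We intend to cite standard semigroup theory, or to use the odd-periodic extension argument sketched above, rather than prove it from scratch.
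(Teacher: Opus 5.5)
Your proof is correct and follows the route the paper indicates; the paper only remarks that the lemma follows from the maximum principle for the heat equation and the form of the nonlinearity, and you flesh this out: the maximum principle gives (BL1) with $M_1=1$, $\kappa_1=\lambda$, hence (BL2) with $\gamma=0$ since $W=Y$, and $|u^3-v^3|\le 3R^2|u-v|$ together with boundedness of $b$ gives (Bf). Your observation that (BD) requires $\lambda\neq k^2$ for every $k$ is a genuine caveat the paper leaves implicit, and it holds for the values $\lambda\in\{2,5\}$ actually used.
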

\article{}
Next theorem follows from Lemmas \ref{lem:AbstractProblemLocalExistence} and \ref{lem:ChafeInfanteAbstractConditions}.
\begin{theorem}
For every $u^0\in Y$ there exists a function $u:[t_0,t_{max}(t_0,v^0))\to Y$ which is the unique solution to the
\eqref{eq:AbstractChafeInfante}, satisfying the Duhamel formula
\begin{equation*}
    u(t) = e^{L(t-t_0)}u^0 + \int_{t_0}^te^{L(t-s)}f(s,u(s))ds.
\end{equation*}
\end{theorem}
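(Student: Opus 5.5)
The statement is an immediate consequence of the abstract local theory, so I will verify the hypotheses and then combine Lemma \ref{lem:AbstractProblemLocalExistence} with Lemma \ref{re:procesCon}. First, by Lemma \ref{lem:ChafeInfanteAbstractConditions}, the system \eqref{eq:AbstractChafeInfante} satisfies \hyperref[as:BL1]{(BL1)}, \hyperref[as:BL2]{(BL2)} and \hyperref[as:Bf]{(Bf)}. Here $Y=W=C_0(0,\pi)$, $L=\partial_{xx}+\lambda$, and $f(t,u)=-b(t)u^3$.

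For completeness I will recall the checks behind that lemma. The heat semigroup with Dirichlet conditions is a contraction on $C_0(0,\pi)$ by the maximum principle, so $\norm{e^{tL}}_Y\le e^{\lambda t}$, which gives (BL1). Since $W=Y$, (BL2) then follows from the lemma stating that (BL1) implies (BL2) with $\gamma=0$. For (Bf) I will use the pointwise identity $u^3-v^3=(u-v)(u^2+uv+v^2)$. This gives $\sup_t\norm{f(t,u)-f(t,v)}_Y\le 3\sup_t|b(t)|\,R^2\norm{u-v}_Y$ for $\norm{u}_Y,\norm{v}_Y\le R$, using the global boundedness of $b$. The map $f$ is continuous because $b$ is continuous and the cube is continuous on $C_0$. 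Note also that $u^3$ vanishes at $0$ and $\pi$, so $f$ does map into $Y$.

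Next, Lemma \ref{lem:AbstractProblemLocalExistence} gives, for every $t_0$ and $u^0\in Y$, a unique mild solution on a short interval. Lemma \ref{re:procesCon} then extends it to the maximal interval $[t_0,t_{max}(t_0,u^0))$ and provides the Duhamel representation there. Uniqueness on the whole maximal interval follows from the standard argument: if two mild solutions differed, take the first time at which they separate, restart from the common value at that time, and apply the local uniqueness again to get a contradiction.

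The only step requiring any care is the (Bf) estimate. It needs $b$ to be bounded uniformly in $t$, which holds for the sinusoidal $b$ considered in Theorem \ref{th:Chaffe-InfanteMainTheorem}. No sign condition on $b$ is needed, since the theorem only asserts local-in-time existence. Everything else is a direct citation of the earlier lemmas.
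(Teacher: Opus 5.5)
Your proposal is correct and matches the paper's argument: the paper derives the theorem directly from Lemma \ref{lem:ChafeInfanteAbstractConditions} together with the abstract local existence result, Lemma \ref{lem:AbstractProblemLocalExistence}. Your appeal to Lemma \ref{re:procesCon} for the maximal interval and your explicit checks fill in details the paper leaves implicit. Those checks are the maximum principle for (BL1), $W=Y$ giving (BL2) with $\gamma=0$, and the factorization $u^3-v^3=(u-v)(u^2+uv+v^2)$ with bounded $b$ for (Bf).
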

\begin{proposition}\label{prop:OddSubspaceChaffeInfante}
Space
$V := \{u\in Y: u_{2i}  = 0, \;\text{for}\;i\in\mathbb{N}\}$ is invariant for system \eqref{eq:Chafee-InfateNonautonomous}. Moreover, if $u_0\in V$ then $u(t,\frac{\pi}{2}+x) = u(t,\frac{\pi}{2}-x)$ 
for every $t\in [t_0,t_{max}(t_0,u_0))$ and $x\in[0,\frac{\pi}{2}].$
\end{proposition}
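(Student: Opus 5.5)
We will prove the symmetry characterization first and derive invariance of $V$ from it, using the uniqueness of mild solutions in Lemma \ref{lem:AbstractProblemLocalExistence}.

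\emph{Step 1: $V$ is the space of functions symmetric about $\pi/2$.} Let $R:C_0(0,\pi)\to C_0(0,\pi)$ be the reflection $(Ru)(x)=u(\pi-x)$. It is a linear isometry of $Y$. Since $\sin(k(\pi-x))=(-1)^{k+1}\sin(kx)$, the sine coefficients satisfy $(Ru)_k=(-1)^{k+1}u_k$. Hence $Ru=u$ holds exactly when $u_{2i}=0$ for all $i$. Here we use that a function in $C_0(0,\pi)\subset L^2(0,\pi)$ is determined by its sine coefficients. So $V=\{u\in Y: Ru=u\}$, and $Ru=u$ is the same as $u(\tfrac{\pi}{2}+x)=u(\tfrac{\pi}{2}-x)$ for $x\in[0,\tfrac{\pi}{2}]$. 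This already gives the second part of the statement once invariance is proved.

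\emph{Step 2: $R$ commutes with the semigroup and with the nonlinearity.} By (BD), $e^{tL}\sin(k\cdot)=e^{(\lambda-k^2)t}\sin(k\cdot)$. Multiplication by $(-1)^{k+1}$ on each mode therefore commutes with $e^{tL}$ on finite sine sums. These sums are dense in $C_0(0,\pi)$; for instance, Fejér-type means of the odd $2\pi$-periodic extension converge uniformly. Both operators are bounded on $Y$ by (BL1), so $Re^{tL}=e^{tL}R$ on $Y$. Alternatively, one can note that the Dirichlet heat kernel on $(0,\pi)$ is invariant under $x\mapsto\pi-x$. For the nonlinearity we compute directly $f(t,Ru)(x)=-b(t)u(\pi-x)^3=(Rf(t,u))(x)$. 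Since $R$ is bounded and linear, it also commutes with the Bochner/Riemann integral in the Duhamel formula.

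\emph{Step 3: uniqueness.} Let $u^0\in V$ and let $u$ be the maximal mild solution. Set $v(t)=Ru(t)$. Applying $R$ to the Duhamel formula and using Step 2 gives
\begin{equation*}
v(t)=e^{L(t-t_0)}Ru^0+\int_{t_0}^t e^{L(t-s)}f(s,v(s))\,ds=e^{L(t-t_0)}u^0+\int_{t_0}^t e^{L(t-s)}f(s,v(s))\,ds .
\end{equation*}
So $v$ is a mild solution with the same initial data on $[t_0,t_{max})$. By the uniqueness in Lemma \ref{lem:AbstractProblemLocalExistence}, extended to the maximal interval via Lemma \ref{re:procesCon}, we get $v=u$. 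Hence $u(t)\in V$ for all $t\in[t_0,t_{max}(t_0,u^0))$, which is the invariance. The symmetry identity then follows from Step 1.

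The only step with real content is the commutation $Re^{tL}=e^{tL}R$ on all of $Y$, rather than just on individual modes. I would handle it either through the explicit kernel symmetry or through the density argument sketched above. Everything else is formal.
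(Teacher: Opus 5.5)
Your argument is correct and complete. The coefficient identity $(Ru)_k=(-1)^{k+1}u_k$, the density argument giving $Re^{tL}=e^{tL}R$ on all of $C_0(0,\pi)$, the pointwise identity $f(t,Ru)=Rf(t,u)$, and the appeal to uniqueness of mild solutions are all sound.

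The paper itself gives no proof of this proposition. The only hint is the later remark in Section~\ref{sec:descritionSet} that the odd-mode subspace ``is invariant for the equation'', which points to a Fourier-parity argument rather than your reflection argument. That route would run as follows:
\begin{itemize}
\item $V$ is a closed subspace of $C_0(0,\pi)$.
\item $e^{tL}V\subset V$, because $e^{tL}$ acts diagonally on sine modes.
\item $f(t,V)\subset V$. By the product formulas of Lemmas~\ref{lem:formulasSinTimesSin} and~\ref{lem:formulasCosTimesSin}, the square of an odd-mode sine series has only even cosine modes, and multiplying it by an odd-mode sine series gives only odd sine modes.
\item Hence the Duhamel contraction of Lemma~\ref{lem:AbstractProblemLocalExistence} maps $C([t_0,t_0+\delta];V)$ into itself, so its fixed point stays in $V$; one then continues to $t_{max}$.
\item The symmetry about $\pi/2$ is read off from the symmetry of each $\sin((2i+1)x)$.
\end{itemize}

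That version matches how the paper's integrator restricts to odd coefficients. However, it needs convolution bookkeeping and is tied to the polynomial form of $f$. Your equivariance argument replaces all of this with a one-line pointwise identity, so it works for any nonlinearity of the form $g(t,u(x))$.

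Your argument also proves more. Applied to arbitrary $u^0$, it gives $\varphi(t,t_0,Ru^0)=R\varphi(t,t_0,u^0)$. Differentiating at $u^0\in V$ then shows that $D_{x^0}\varphi(t,t_0,u^0)$ commutes with $R$. So along a symmetric orbit the variational equation decouples into odd and even modes. For example, the even-mode entries of the paper's enclosure of $D_{x^0}\varphi(1,0,X^0)\sin(x)$ in Theorem~\ref{th:Chaffe-InfanteAtractingOrbit1} are in fact exactly zero.
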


\subsection{The periodically forced Burgers equation with fractional Laplacian }
We consider equation \eqref{eq:BurgersFractionalLaplacianEquation} with $g(t,\cdot)\in  H^2_{odd}(-\pi,\pi).$
As $u(t,x)$ is odd and $2\pi$-periodic it follows  that $u(t,0)=u(t,\pi)=0,$ so $u$ satisfies Dirichlet boundary conditions on $(0,\pi)$.  
We introduce the spaces:
\begin{equation*}
    Y = H^2_{\text{odd}} (-\pi,\pi),\qquad W=H^1_{\text{odd}}(-\pi,\pi),\quad \qquad H = L^2_{odd}(-\pi,\pi),
\end{equation*}
with the following norms:
\begin{equation*}
    \norm{u}_Y = \norm{u_{xx}}_{L^2(-\pi,\pi)}\qquad\norm{u}_{W} = \norm{u_x}_{L^2(-\pi,\pi)}\qquad\norm{u}_{H} = \norm{u}_{L^2(-\pi,\pi)}.
\end{equation*}
In the space $H$ the system of functions  $e_k = \sin(kx)$
for $k\in \mathbb{N}$ is an orthogonal basis.  As in the case of the Chafee--Infante problem, for $u\in L^2_{odd}(-\pi,\pi)$ we denote by $u_k$ the $k$-th coefficient in the Fourier expansion in the sine series of $u.$
 The fractional Laplacian is defined in terms of the Fourier expansion, that is, for $u\in H^2_{odd}(-\pi,\pi)$ we have
\begin{equation*}
    (-\Delta)^\alpha u =\sum_{k=1}^\infty k^{2\alpha}u_k\sin(kx).
\end{equation*}
We define the linear operator $L:D(L) \to Y,$ with $D(L)=\{u\in Y: Lu\in Y \}$ in the following way
\begin{equation*}
    L :=-(-\Delta)^\alpha.
\end{equation*} 
This operator defines $C_0$ semigrup $e^{tL}$ from $Y$ to $Y$ by the formula
\begin{equation*}
    e^{tL}u = \sum_{k=1}^\infty e^{-tk^{2\alpha}}u_k^0\sin(kx).
\end{equation*}
The nonlinearity is defined as
\begin{equation*}
    f(t,u) = \nu(u^2)_{x} +g(t).
\end{equation*}
With previously defined spaces and functions, we formulate the Burgers equation with fractional Laplacian \eqref{eq:BurgersFractionalLaplacianEquation} as the following abstract problem 
\begin{equation}\label{eq:BurgersFrationalAbstract}
    \begin{cases}
     \frac{d}{dt}u(t) = Lu(t) + f(t,u(t)),\\
      u(t_0) = u^{0}.
    \end{cases}
\end{equation} 

\begin{lemma}\label{lem:BurgersFrationalConditions}
    Let $\alpha\in (\frac{1}{2},1]$. The system \eqref{eq:BurgersFrationalAbstract} satisfies conditions \hyperref[as:BL1]{(BL1)}, \hyperref[as:BL2]{(BL2)}, \hyperref[as:Bf]{(Bf)}, \hyperref[as:BH]{(BH)}, \hyperref[as:BD]{(BD)}.
\end{lemma}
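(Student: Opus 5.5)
We verify each condition directly in Fourier coordinates. The norms are diagonal: for odd $2\pi$-periodic $u=\sum_k u_k\sin(kx)$ we have $\norm{u}_Y^2=\pi\sum_k k^4u_k^2$, $\norm{u}_W^2=\pi\sum_k k^2u_k^2$ and $\norm{u}_H^2=\pi\sum_k u_k^2$. Since $k\geq1$, the embeddings $Y\subset W\subset H$ are continuous, and they are dense because finite sine sums lie in all three spaces.

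\textbf{(BH), (BD), (BL1).} The functions $e_k=\sin(kx)$ form an orthogonal basis of $H$, they lie in $D(L)$, and $Le_k=-k^{2\alpha}e_k$ with $\lambda_k=-k^{2\alpha}\neq 0$. The semigroup is given by the series $e^{tL}u=\sum_k e^{-tk^{2\alpha}}u_k\sin(kx)$, so $e^{tL}e_k=e^{\lambda_k t}e_k$; this gives (BD) and (BH). Every multiplier has modulus at most $1$, so $\norm{e^{tL}}_{\mathcal{L}(Y)}\leq 1$. Strong continuity follows from dominated convergence, since $\norm{e^{tL}u-u}_Y^2=\pi\sum_k k^4(1-e^{-tk^{2\alpha}})^2u_k^2\to 0$. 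Hence (BL1) holds with $M_1=1$ and any $\kappa_1>0$.

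\textbf{(BL2).} The same series defines a contraction $C_0$ semigroup on $W$. For $z\in W$,
\begin{equation*}
\norm{e^{tL}z}_Y^2=\pi\sum_k k^2e^{-2tk^{2\alpha}}\,k^2z_k^2\leq \sup_{k}\brakets{k^2e^{-2tk^{2\alpha}}}\norm{z}_W^2 .
\end{equation*}
Write $s=k^{2\alpha}$. Then $k^2e^{-2tk^{2\alpha}}=s^{1/\alpha}e^{-2ts}\leq C_\alpha t^{-1/\alpha}$, and taking square roots gives $\norm{e^{tL}z}_Y\leq C t^{-\gamma}\norm{z}_W$ with $\gamma=\frac{1}{2\alpha}$. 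This is where the hypothesis enters: $\gamma<1$ exactly when $\alpha>\frac12$. The constant comes with the factor $e^{\kappa_2t}$ allowed by (BL2), with any $\kappa_2>0$.

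\textbf{(Bf).} We need a local Lipschitz estimate for $f(t,u)=\nu(u^2)_x+g(t)$ from $Y$ to $W$. The forcing $g$ cancels in differences, and $f$ maps into $W$ because $g(t,\cdot)\in H^2_{odd}\subset W$. If $u\in H^2_{odd}$, then $u^2$ is even and $(u^2)_x=2uu_x$ is odd and periodic, so it has the correct parity. It remains to show
\begin{equation*}
\norm{(u^2-v^2)_x}_W=\norm{(u^2-v^2)_{xx}}_{L^2}\leq C(R)\norm{u-v}_Y .
\end{equation*}
We write $u^2-v^2=(u-v)(u+v)$ and use the fact that $H^2$ of the circle is a Banach algebra, which follows from the Sobolev embedding $H^1\subset L^\infty$ in one dimension. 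Concretely, $(wz)_{xx}=w_{xx}z+2w_xz_x+wz_{xx}$. The terms $w_{xx}z$ and $wz_{xx}$ are bounded by $\norm{\cdot}_{L^\infty}\norm{\cdot}_{L^2}$, and the middle term by $\norm{w_x}_{L^\infty}\norm{z_x}_{L^2}$.

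The one genuine point is the equivalence of $\norm{u_{xx}}_{L^2}$ with the full $H^2$ norm. This holds by a Poincaré-type inequality: there is no $k=0$ mode, so $\sum_k u_k^2\leq\sum_k k^4u_k^2$, and the sup norms can be bounded directly via $\sum_k|u_k|\leq\brakets{\sum_k k^{-4}}^{1/2}\brakets{\sum_k k^4u_k^2}^{1/2}$. The analogous bound applies to $u_x$ with weight $k^{-2}$. We then obtain $C(R)=4|\nu|\,C\,R$, uniformly in $t$, which gives (Bf). Along the way we must confirm that $t\mapsto g(t)$ is bounded and continuous into $W$, so that $f$ is continuous. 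The main obstacle is only the bookkeeping in the algebra estimate; the structural constraint $\alpha>\frac12$ is used solely in (BL2).
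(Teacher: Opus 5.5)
Your proof is correct and follows the same route as the paper. You use diagonal Fourier bounds for (BL1). For (BL2) you maximise $k^2e^{-2tk^{2\alpha}}$, which gives $(2e\alpha t)^{-1/\alpha}$ and hence $\gamma=\frac{1}{2\alpha}<1$. For (Bf) you split $(u^2-v^2)_{xx}$ by the Leibniz rule and apply one-dimensional $L^\infty$ bounds. The only cosmetic difference is that you factor $u^2-v^2=(u-v)(u+v)$ before differentiating, whereas the paper expands $(u^2)_{xx}=2u_x^2+2uu_{xx}$ and then takes differences; both give the same type of estimate with a comparable constant.
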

\begin{proof}
It is easy to observe that the semigroup is generated by the fractional Laplacian satisfies the bound 
\begin{equation*}
    \norm{e^{tL}u_0}_{Y}\leq\norm{u_0}_{Y},
\end{equation*}
which implies condition \hyperref[as:BL1]{(BL1)}.
For \hyperref[as:BL2]{(BL2)} we observe that $e^{tL}$ can be extended to the $C_0$ semigroup on the space $W$, which satisfies 
\begin{equation*}
    \norm{e^{tL}u_0}_{Y}\leq\frac{C}{t^\frac{1}{2\alpha}}\norm{u_0}_{W}\quad \text{for some $C>0$ }.
\end{equation*}
The above bound follows from the computations that use the fact that the global maximum of $e^{-2tk^{2\alpha}}k^2$ with respect to the variable $k$ is equal to $
(2e\alpha t)^{-\frac{1}{\alpha
}}$.
\begin{align*}
    \norm{e^{tL}u^0}_{Y}^2 = \pi\sum_{k=1}^\infty (e^{-2tk^{2\alpha}}k^2)(k|u^0_k|)^2 \leq  \pi\sum_{k=1}^\infty \frac{1}{(2e\alpha t)^{\frac{1}{\alpha}}
    }(k|u^0_k|)^2 = \frac{1}{(2e\alpha t)^{\frac{1}{\alpha}}}\norm{u^0}^2_{W}.
\end{align*}
 For $f$ we have
\begin{align*}
     \norm{f(t,u_1)-f(t,u_2)}_{W} &= \nu\norm{(u_1^2)_{xx}-(u_2^2)_{xx}}_{L^2(-\pi,\pi)}
     \\&\leq2 \nu\norm{\left((u_1)_{x}\right)^2-\left((u_2)_{x}\right)^2}_{L^2(-\pi,\pi)}+2\nu\norm{u_1((u_1-u_2)_{xx})}_{L^2(-\pi,\pi)} \\
     &+2\nu\norm{(u_2)_{xx}(u_1-u_2)}_{L^2(-\pi,\pi)}.
\end{align*}
Observe that there exists some $\overline{C}>0,$ that for all $u\in Y $  we have 
\begin{equation*}
    \norm{u}_{L^\infty(-\pi,\pi)}\leq \overline{C}\norm{u_{x}}_{L^2(-\pi,\pi)} 
    \quad \text{and}
    \quad
    \norm{u_{x}}_{L^\infty(-\pi,\pi)}\leq \overline{C}\norm{u_{xx}}_{L^2(-\pi,\pi)}. 
\end{equation*}
We estimate each term 
\begin{align*}
    \norm{((u_1)_{x})^2-((u_2)_{x})^2}_{L^2(-\pi,\pi)}&\leq \norm{(u_1-u_2)_x}_{L^2(-\pi,\pi)}\norm{(u_1+u_2)_x}_{L^\infty(-\pi,\pi)}\\
    &\leq \overline{C}\norm{u_1-u_2}_Y\norm{u_1+u_2}_{Y},
    \\
    \norm{u_1((u_1-u_2)_{xx})}_{L^2(-\pi,\pi)}&\leq \norm{u_1}_{L^\infty(-\pi,\pi)}\norm{(u_1-u_2)_{xx}}_{L^2(-\pi,\pi)}\leq \overline{C} \norm{u_1}_Y\norm{u_1-u_2}_Y,
    \\
    \norm{(u_2)_{xx}(u_1-u_2)}_{L^2(-\pi,\pi)}&\leq \norm{(u_2)_{xx}}_{L^2(-\pi,\pi)}\norm{u_1-u_2}_{L^\infty(-\pi,\pi)}\leq \overline{C} \norm{u_2}_Y\norm{u_1-u_2}_{Y}.
\end{align*}
So, with $C(R) = 8\nu \overline{C}R$ we obtain the following inequality
\begin{equation*}
   \norm{f(t,u_1)-f(t,u_2)}_{W} \leq  C(R)\norm{u_1-u_2}_{Y},
\end{equation*}
which proves the \hyperref[as:Bf]{(Bf)} condition. Conditions  \hyperref[as:BH]{(BH)}, \hyperref[as:BD]{(BD)} follow from the definition of spaces $Y,\;H$ and the operator $L.$
\end{proof}
\subsection{Variational equations}
To study the variational equation, we additionally assume that:
\begin{itemize}
    \item [(BV1)]\label{as:BV1} The function $f(t,\cdot):Y\to W$ is Fr\'{e}chet differentiable at every point $x\in Y$ with a linear and bounded derivative $Df(t,x)\in \mathcal{L}(Y;W)$.
    \item [(BV2)]\label{as:BV2} For every $R>0$ there exists $C_2(R)>0,\;\lambda_2(R)>0$ such that for every $\norm{x}_{Y}\leq R$ and $\norm{h}_{Y}\leq  \lambda_2(R) $ we have
    \begin{equation}\label{bounds}
        \sup_{t\in\mathbb{R}}
        {\norm{f(t,x+ h )-f(t,x) - Df(t,x)h}_{W}}\leq \|h\|_{Y}^2 C_2(R )\quad\text{and} \quad
        \sup_{t\in\mathbb{R}}
        \norm{Df(t,x)}_{\mathcal{L}(Y;W)}\leq C_2(R).
    \end{equation}
     \item [(BV3)]\label{as:BV3} The mapping $Df:\mathbb{R}\times Y\to \mathcal{L}(Y;W)$ is continuous.
\end{itemize}

With these assumptions, we formulate the variational system
\begin{equation}\label{eq:AbstractVariationalProblem}
\begin{cases}
   \frac{d}{dt}x(t) = Lx(t) + f(t,x(t)),
   \\ \frac{d}{dt}h(t) = Lh(t) + Df(t,x(t))h(t) ,
   \\ x(t_0) = x^0,\quad h(t_0)=h^0.
\end{cases}
\end{equation}
We can write this system of equations as the abstract problem \eqref{eq:AbstractProblem} is the following way
\begin{equation}\label{eq:AbstractVariationalProblemAsOneEquation}
\begin{cases}
   \frac{d}{dt}(x(t),h(t)) = \overline{L}(x(t),h(t)) + \overline{f}(t,x(t),h(t)),
   \\ (x(t),h(t)) = (x^0,h^0).
\end{cases}
\end{equation}
with the product spaces $\overline{Y} = Y\times Y$ and $\overline{W}= W\times W$ equipped with the norms $\norm{(x,h)}_{\overline{Y}} = \norm{x}_{Y}+\norm{h}_{Y}$ and
$\norm{(x,h)}_{\overline{W}} = \norm{x}_{W}+\norm{h}_{W}.$ The linear operator 
$\overline{L}:D(L)\times D(L)\to \overline{Y}$ and the nonlinear term  $\overline{f}: \mathbb{R}\times \overline{Y}\to \overline{W}$ are given by the formulas
\begin{equation*}
    \overline{L}(x,h) = (Lx,Lh), \quad\overline{f}(t,x,h)=(f(t,x),Df(t,x)h).
\end{equation*}
\begin{lemma}\label{rem:VartionalAsNormalAbstract}
Assume the conditions \hyperref[as:BL1]{(BL1)}, \hyperref[as:BL2]{(BL2)}, \hyperref[as:Bf]{(Bf)},\hyperref[as:BV1]{(BV1)}-\hyperref[as:BV1]{(BV3)}. Then conditions \hyperref[as:BL1]{(BL1)}, \hyperref[as:BL2]{(BL2)}, \hyperref[as:Bf]{(Bf)} are also satisfied for the 
system \eqref{eq:AbstractVariationalProblemAsOneEquation}.
\end{lemma}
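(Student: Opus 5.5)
The plan is to check the three conditions for the product system \eqref{eq:AbstractVariationalProblemAsOneEquation} one at a time. \hyperref[as:BL1]{(BL1)} and \hyperref[as:BL2]{(BL2)} are immediate: $e^{t\overline{L}}(x,h)=(e^{tL}x,e^{tL}h)$ is a $C_0$ semigroup on $\overline{Y}$ that extends to $\overline{W}$. With the sum norms we get
\[
\norm{e^{t\overline{L}}(x,h)}_{\overline{Y}}\leq M_1e^{\kappa_1t}\norm{(x,h)}_{\overline{Y}}
\quad\text{and}\quad
\norm{e^{t\overline{L}}(z,w)}_{\overline{Y}}\leq M_2e^{\kappa_2t}t^{-\gamma}\norm{(z,w)}_{\overline{W}},
\]
so the constants are unchanged. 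Continuity of $\overline{f}:\mathbb{R}\times\overline{Y}\to\overline{W}$ follows from continuity of $f$ and from \hyperref[as:BV3]{(BV3)}.

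For \hyperref[as:Bf]{(Bf)}, take $\norm{(x,h)}_{\overline{Y}},\norm{(y,k)}_{\overline{Y}}\leq R$. The first component is handled by \hyperref[as:Bf]{(Bf)} for $f$. For the second component we split
\[
Df(t,x)h-Df(t,y)k=Df(t,x)(h-k)+\bigl(Df(t,x)-Df(t,y)\bigr)k .
\]
The first term is bounded by $C_2(R)\norm{h-k}_Y$ using \hyperref[as:BV2]{(BV2)}. For the second term it suffices to show that $x\mapsto Df(t,x)$ is Lipschitz on the ball $\norm{x}_Y\leq R$, in the $\mathcal{L}(Y;W)$ norm and uniformly in $t$. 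Indeed, once this holds, the second term is bounded by $\mathrm{Lip}\cdot R\,\norm{x-y}_Y$, and we may take $C(R)$ for $\overline{f}$ to be $C(R)+C_2(R)+R\,\mathrm{Lip}$.

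The main step is to derive this Lipschitz property from the uniform quadratic remainder in \hyperref[as:BV2]{(BV2)}. Fix $x$ and set $d=y-x$. Define
\[
g(z)=f(t,z)-f(t,x)-Df(t,x)(z-x).
\]
By \hyperref[as:BV2]{(BV2)} (applied at $x$, with a slightly enlarged radius such as $2R+1$ so that all points considered stay in the ball) we have $\norm{g(z)}_W\leq C\norm{z-x}_Y^2$ for $\norm{z-x}_Y\leq\lambda_2$. Moreover, for any $h$,
\[
g(y+h)-g(y)-\bigl(Df(t,y)-Df(t,x)\bigr)h=f(t,y+h)-f(t,y)-Df(t,y)h ,
\]
and the right-hand side is $O(\norm{h}_Y^2)$, again by \hyperref[as:BV2]{(BV2)} applied at $y$. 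Consequently
\[
\norm{(Df(t,y)-Df(t,x))h}_W\leq C\bigl(\norm{d+h}_Y^2+\norm{d}_Y^2+\norm{h}_Y^2\bigr).
\]
Choosing $\norm{h}_Y=\norm{d}_Y$ and using homogeneity gives $\norm{Df(t,y)-Df(t,x)}_{\mathcal{L}(Y;W)}\leq 6C\norm{d}_Y$, valid whenever $\norm{d}_Y\leq\lambda_2/2$. When $\norm{d}_Y>\lambda_2/2$, the crude bound $2C_2\leq(4C_2/\lambda_2)\norm{d}_Y$ gives the same type of estimate. Together these yield a uniform Lipschitz constant on the ball, and all constants are independent of $t$ because \hyperref[as:BV2]{(BV2)} holds with $\sup_t$. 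This step is the only non-routine point; the rest is bookkeeping of constants.
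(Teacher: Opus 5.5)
Your proposal is correct. The paper states this lemma without proof, so there is no argument of the paper's to compare against; yours fills that gap.

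You have correctly isolated the one non-routine point. (Bf) for $\overline{f}$ requires $x\mapsto Df(t,x)$ to be Lipschitz on bounded subsets of $Y$, in the $\mathcal{L}(Y;W)$ norm and uniformly in $t$. The mere continuity in (BV3) does not give this. It does follow from the uniform quadratic remainder in (BV2), via your identity for $g$ and the choice $\norm{h}_Y=\norm{d}_Y$.

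One small simplification: enlarging the radius to $2R+1$ is unnecessary. (BV2) only constrains the base point ($x$ or $y$), not the perturbed point $x+h$. Radius $R$ therefore already gives the Lipschitz constant $\max\{6C_2(R),\,4C_2(R)/\lambda_2(R)\}$.
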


\begin{lemma}\label{lem:VariationalProperties}
Assume that conditions \hyperref[as:BL1]{(BL1)}, \hyperref[as:BL2]{(BL2)}, \hyperref[as:Bf]{(Bf)}, \hyperref[as:BV1]{(BV1)}-\hyperref[as:BV1]{(BV3)} are satisfied.
For every $R,\;T>0$ there exist constants $C_3(R,T),\;\lambda_3(R,T)>0$ such that for every $t_0\in\mathbb{R},x^{0}\in Y$
for which the local process $\varphi$ corresponding to the equation \eqref{eq:AbstractProblem} is well defined for $(T+t_0,t_0,x^{0})$  and satisfies
\begin{equation*}
    \norm{\varphi(\tau+t_0,t_0,x^{{0}})}_{Y}\leq R\text{ for every }  \tau\in[0,T],
\end{equation*} 
the following assertions hold:
\begin{itemize}
    \item For every $h^0\in Y,$ there exists a unique function $h:[t_0,t_0+T]\to Y$ such that the pair $(\varphi(t_0,t,x^0),h(t))$ is a solution to  \eqref{eq:AbstractVariationalProblem}.
    \item The function $h(t)$ satisfies
    \begin{equation*}
        \frac{\partial \varphi}{\partial h^0}(t,t_0,x^0):=\lim_{\lambda\to 0} \frac{\varphi(t,t_0,x^0+\lambda h^0) - \varphi(t,t_0,x^0)}{\lambda} = h(t)\quad \textrm{for}\quad t\in [t_0,t_0+T].
    \end{equation*}
    
    \item For every $t\in[t_0,t_0+T]$ the function $\varphi(t,t_0,\cdot):Y\to Y$ is Fr\'{e}chet differentiable at $x^0$ with the derivative $V(t,t_0,x^0)h^0 := h(t)$
    \item For every $t\in[t_0,t_0+T]$ and $h^0\in B_Y(0,\lambda_3(R,T))$ the following estimates hold
    \begin{align}
\label{eq:ineq1}\norm{\varphi(t,t_0,x^0+h^0)-\varphi(t,t_0,x^0) - V(t,t_0,x^0)h^0}_{Y}&\leq C_3(R,T)\norm{h^0}_Y^2,\\
   \norm{V(t,t_0,x^0)}_{\mathcal{L}(Y)}& \leq C_3(R,T).\nonumber
    \end{align}

\end{itemize}

\end{lemma}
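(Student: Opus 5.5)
The plan is to treat the $h$-equation as a linear non-autonomous Duhamel equation along the fixed trajectory $x(t)=\varphi(t,t_0,x^0)$. Then I compare $\varphi(t,t_0,x^0+h^0)-x(t)$ with $h(t)$ via a singular Gronwall argument. The basic tool is the weighted-norm contraction already used in the proof of Lemma \ref{rem:LongTimeExistence}: the sup-norm $\norm{y}_\beta=\sup_t \norm{y(t)}_Y e^{-\beta(t-t_0)}$, with $\beta$ chosen via \eqref{eq:integralfracExtEstimation}.

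\emph{Existence and uniqueness of $h$.} For fixed $x(\cdot)$ with $\norm{x(t)}_Y\le R$, I consider
\[
\mathcal{S}(h)(t)=e^{L(t-t_0)}h^0+\int_{t_0}^t e^{L(t-s)}Df(s,x(s))h(s)\,ds
\]
on $C([t_0,t_0+T];Y)$. By (BV3) the integrand is continuous, so $\mathcal S$ maps continuous functions to continuous functions. By (BL2) and (BV2),
\[
\norm{\mathcal S(h_1)(t)-\mathcal S(h_2)(t)}_Y\le M_2e^{\kappa_2T}C_2(R)\int_{t_0}^t (t-s)^{-\gamma}\norm{h_1(s)-h_2(s)}_Y\,ds .
\]
Choosing $\beta$ large makes $\mathcal S$ a $\tfrac12$-contraction in $\norm{\cdot}_\beta$ on the whole space, since the equation is linear and no invariant ball is needed. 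The fixed point is global on $[t_0,t_0+T]$ and depends linearly on $h^0$; together with $x$ it solves \eqref{eq:AbstractVariationalProblem}. The same estimate applied to $\mathcal S(h)$ itself gives
\[
\norm{h}_\beta\le M_1e^{|\kappa_1|T}\norm{h^0}_Y+\tfrac12\norm{h}_\beta ,
\]
hence $\norm{V(t,t_0,x^0)}_{\mathcal L(Y)}\le 2M_1e^{(|\kappa_1|+\beta)T}$. The constant $\beta$ depends only on $R,T$ and the structural constants, so this is the second bound with a suitable $C_3$.

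\emph{Quadratic remainder.} I take $\lambda_3\le\lambda_1(R,T)$ from Lemma \ref{rem:LongTimeExistence}. Then $y(t)=\varphi(t,t_0,x^0+h^0)$ exists on $[t_0,t_0+T]$ and satisfies $\norm{y(t)-x(t)}_Y\le\min\{C_1\norm{h^0}_Y,1\}$. I set $r(t)=y(t)-x(t)-h(t)$. Subtracting the Duhamel formulas gives
\[
r(t)=\int_{t_0}^t e^{L(t-s)}\Big[\big(f(s,y)-f(s,x)-Df(s,x)(y-x)\big)+Df(s,x)r(s)\Big]ds .
\]
I further shrink $\lambda_3$ so that $C_1\lambda_3\le\lambda_2(R)$. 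Then (BV2) bounds the first bracket in $W$ by $C_2(R)C_1^2\norm{h^0}_Y^2$, and the second by $C_2(R)\norm{r(s)}_Y$. The singular integral inequality
\[
\norm{r(t)}_Y\le a\norm{h^0}_Y^2+b\int_{t_0}^t (t-s)^{-\gamma}\norm{r(s)}_Y\,ds
\]
is then closed by the same $\beta$-weighted trick: taking $\norm{\cdot}_\beta$ and absorbing half gives $\norm{r}_\beta\le 2a\norm{h^0}_Y^2$. This yields \eqref{eq:ineq1}.

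\emph{Directional limit and Fréchet differentiability.} These follow immediately from \eqref{eq:ineq1} and linearity of $V$. Replacing $h^0$ by $\lambda h^0$ gives an error $O(\lambda^2)$, so the directional limit is $h(t)$. The bound $C_3\norm{h^0}^2=o(\norm{h^0})$, together with boundedness of $V$, gives Fréchet differentiability.

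The main obstacle is the quadratic estimate. Its difficulty is bookkeeping: ensuring every constant ($\beta$, $C_1$, $\lambda_1$, $\lambda_2$) depends only on $R,T$ and the structural constants, and that $\norm{x+\theta(y-x)}_Y\le R+1$ stays in the range where (BV2) applies. This last point is guaranteed by the cap $\min\{\cdot,1\}$ in Lemma \ref{rem:LongTimeExistence}. If necessary, (BV2) would be applied with radius $R+1$.
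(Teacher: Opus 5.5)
Your proof is correct, but it is organized differently from the paper's.

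\textbf{The paper's route.} The paper never solves the linear equation first. It forms the difference quotients $g(t,\lambda)=\lambda^{-1}\bigl(\varphi(t,t_0,x^0+\lambda h^0)-\varphi(t,t_0,x^0)\bigr)$ and writes a Duhamel identity for them involving $Df$ and the remainder $\Lambda$. Using the same weighted-norm absorption you use, it shows that $\{g(\cdot,\lambda_n)\}$ is Cauchy in $C([t_0,t_0+T];Y)$. It then defines $h$ as the limit, with $\norm{g(t,\lambda)-h(t)}_Y\le \overline{C}(R,T)|\lambda|\norm{h^0}_Y^2$. Only afterwards does it pass to the limit by dominated convergence to see that $h$ satisfies the variational Duhamel formula, and it invokes Lemma~\ref{rem:VartionalAsNormalAbstract} for uniqueness. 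The operator bound is read off as $\overline{C}+C_1$, and \eqref{eq:ineq1} is that limit estimate taken at $\lambda=1$.

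\textbf{Your route.} You reverse the order. A single contraction on the whole space gives existence, uniqueness, linearity and the explicit bound $\norm{V}\le 2M_1e^{(\kappa_1+\beta)T}$ at once. The quadratic bound then comes from a singular Gronwall estimate on $r=y-x-h$. The directional derivative and Fr\'echet differentiability follow as immediate corollaries.

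\textbf{What each buys.} Your argument is more self-contained: it needs no Cauchy-sequence argument, no dominated convergence, and no appeal to the product-system lemma for uniqueness. The paper's argument has the conceptual advantage of producing $h$ directly as the limit of difference quotients of the nonlinear flow. In that approach the variational equation is derived rather than solved independently.

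\textbf{Minor remark.} Your concern about $\norm{x+\theta(y-x)}_Y\le R+1$ is unnecessary. (BV2) is formulated with a base point of norm at most $R$ and an increment of norm at most $\lambda_2(R)$. That is exactly how you apply it, with base point $x(s)$ and increment $y(s)-x(s)$ of norm at most $C_1\lambda_3\le\lambda_2(R)$.
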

\begin{proof}
From Lemma \ref{rem:LongTimeExistence} we know that the process $\varphi(t,t_0,x+h)$ is well defined 
 and
$$\norm{\varphi(t,t_0,x^0+h)}_Y\leq R + 1 \ \ \textrm{for every}\ \ 
\norm{h}_Y\leq \lambda_1(R,T),t\in[t_0,t_0+T].$$
As conditions \hyperref[as:BV1]{(BV1)}-\hyperref[as:BV3]{(BV3)} are satisfied, for $|\lambda |\leq \frac{1}{\norm{h^0}_Y}\min\{\lambda_1(R,T),\lambda_2(R+1) \}$
 we can consider the function
\begin{align*}
    g(t,\lambda) &= \frac{1}{\lambda}\left(\varphi(t,t_0,x^0+\lambda h^0) - \varphi(t,t_0,x^0) \right) 
    \\&= e^{(t-t_0)L}h^0 + \int_{t_0}^t e^{L(t-s) } \left(Df(s,\varphi(s,t_0,x^0)) g(s,\lambda)   
    +  
    \frac{1}{\lambda} \Lambda (s,\varphi(s,t_0,x^0),\lambda g(s,\lambda) )\right)  ds,
\end{align*}
where
\begin{equation*}
    \Lambda (t,x,h) := f(t,x+ h )-f(t,x) - Df(t,x)h.
\end{equation*}

Let $\lambda_n $ be a sequence that tends to $0.$ To observe that $\{g(t,\lambda_n)\}_{n\in\mathbb{N}}$ is a Cauchy sequence in the space $C([t_0,t_0+T];Y)$ with the norm 
$\sup_{t\in[t_0,t_0+T]} \norm{h(t)}_Y,$ we compute 
\begin{align*}
  g(t,\lambda_n) -g(t,\lambda_m) =&
  \int_{t_0}^t e^{L(t-s)}Df(s,\varphi(s,t_0,x^0))(g(s,\lambda_n) -g(s,\lambda_m) )ds
   \\& + \frac{1}{\lambda_n}\int_{t_0}^t  e^{L(t-s)}\Lambda(s,\varphi(s,t_0,x^0),\lambda_n g(s,\lambda_n))ds
    \\&-\frac{1}{\lambda_m}\int_{t_0}^t  e^{L(t-s)}\Lambda(s,\varphi(s,t_0,x^0),\lambda_m g(s,\lambda_m))ds.
\end{align*}
From conditions \hyperref[as:BV2]{(BV2)},\hyperref[as:BL2]{(BL2)} we have for some $\alpha>0$
\begin{align*}
    &\norm{\int_{t_0}^t e^{L(t-s)}Df(s,\varphi(s,t_0,x^0))(g(s,\lambda_n) -g(s,\lambda_m) )ds}_Y
     \\
    &
    \leq M_2e^{T \kappa_2}C_2(R+1)
    \int_{{t_0}}^t
    \frac{1}{(t-s)^\delta} e^{(s-t_0)\alpha}ds
    \left (\sup_{s\in[t_0,t_0+T]}e^{-(s-t_0)\alpha} 
    \norm{g(s,\lambda_n)- g(s,\lambda_m)}_Y \right).
\end{align*}
Also, from Lemma \ref{rem:LongTimeExistence} it follows that
\begin{align*}
    \norm{\int_{t_0}^t  e^{L(t-s)}\Lambda(s,\varphi(s,t_0,x^0),\lambda g(s,\lambda))ds}_Y&\leq M_2{e^{T\kappa_2}} C_2(R+1)\frac{T^{1-\gamma}}{1-\gamma}|\lambda|^2 \sup_{s\in[t_0,t_0+T]}\|g(s,\lambda)\|_Y^2
    \\
    &\leq 
    M_2{e^{T\kappa_2}} C_2(R+1)C_1(R,T)^2 \frac{T^{1-\gamma}}{1-\gamma}|\lambda|^2\norm{h^0}_Y
^2.
\end{align*}
From \eqref{eq:integralfracExtEstimation} we can pick $\alpha>0$ such that
\begin{equation*}
    \int_{{t_0}}^t
    \frac{1}{(t-s)^\gamma} e^{\alpha(s-t_0)}ds \leq \frac{1 }{2M_2e^{T \kappa_2}C_2(R+1)}e^{\alpha(t-t_0)}.
\end{equation*}
We obtain
\begin{align*}
    \norm{ g(t,\lambda_n) -g(t,\lambda_m)}_Y &\leq
    e^{\alpha (t-t_0)}
    \frac{1}{2}\sup_{s\in[t_0,t_0+T]}e^{-\alpha (s-t_0)} 
    \norm{g(s,\lambda_n)- g(s,\lambda_m)}_Y  \\&+
    2 M_2{e^{T\kappa_2}} C_2(R+1)C_1(R,T)^2 \frac{T^{1-\gamma}}{1-\gamma}\max\{|\lambda_m|,|\lambda_n|\}\|h^0\|_Y^2. 
\end{align*} 
It follows that
\begin{equation*}
 \norm{ g(t,\lambda_n) -g(t,\lambda_m)}_Y \leq \overline{C}(R,T) \max\{|\lambda_m|,|\lambda_n|\}\norm{h^0}_Y^2,
\end{equation*}
with 
\begin{equation*}
    \overline{C}(R,T)  := 4e^{T(\alpha+\kappa_2)}M_2C_2(R+1)C_1(R,T)^2 \frac{T^{1-\gamma}}{1-\gamma}.
\end{equation*}
As the space $C([t_0,t_0+T];Y)$ is complete, the sequence  $ \{g(t,\lambda_n)\}$ converges in that space to some function $h(t).$
By passing to the limit, we obtain
\begin{equation}\label{eq:h(t)_estimations}
     \norm{ g(t,\lambda) -h(t)}_Y \leq \overline{C}(R,T)|\lambda| \norm{h^0}_Y^2 \text{ for } |\lambda|\leq\frac{1}{\norm{h^0}}_Y \min\{\lambda_1(R,T),\lambda_2(R+1)\}.
\end{equation}
We observe that $(\varphi(t,t_0,x^0),h(t))$ is the unique solution to problem \eqref{eq:AbstractVariationalProblem}. 

Indeed, from the dominated convergence theorem we see that the function $h(t)$ satisfies the following Duhamel formula
\begin{equation*}
    h(t) = e^{(t-t_0)L}h^0+\int_{t_0}^te^{(t-s)L}Df(s,\varphi(s,t_0,x))h(s)ds.
\end{equation*}
The uniqueness of $h(t)$ follows from Lemma \ref{rem:VartionalAsNormalAbstract}.
Additionally, from the definition of $g(t,\lambda)$ we see that 
\begin{equation*}
   \frac{\partial \varphi}{\partial h^0}(t,t_0,x_0) = h(t). 
\end{equation*}
Observe that  the mapping $V(t,t_0,x^0) h^0 = h(t) $ is linear and bounded, leading from $Y$ to $Y.$ Linearity follows from the linearity of the variational equation. If $\norm{h^0}_Y\leq 1,$ we can find a sufficiently small $|\lambda|<1$ such that from  \eqref{eq:h(t)_estimations} and Lemma \ref{rem:LongTimeExistence} we have
\begin{align*}
    \norm{h(t)}_{Y} \leq 
    \norm{h(t) - g(t,\lambda)}_{Y} +
    \norm{g(t,\lambda)}_{Y}
    \leq \overline{C}(R,T)+ C_1(R,T).
\end{align*}
which shows the boundness of $V.$
Now let $\norm{h^0}_Y\leq\min\{\lambda_1(R,T),\lambda_2(R+1)\}.$ Then from inequality \eqref{eq:h(t)_estimations} we obtain
\begin{align*}
  \norm{\varphi(t,t_0,x^0+h^0)-\varphi(t,t_0,x^0)
  -V(t,t_0,x^0)h^0}_Y\leq \overline{C}(R,T) \norm{h^0}_Y^2,
\end{align*}
So for every $t\in[t_0,t_0+T]$ and $h^0\in Y$ such that 
$\norm{h^0}_Y\leq\min\{\lambda_1(R,T),\lambda_2(R+1)\}$ we have \eqref{eq:ineq1}
which concludes the proof.
\end{proof}
We present another way to describe the variational equation.
Namely, if $x:[t_0,t_0+T] \to Y$ is a solution to \eqref{eq:AbstractProblem} then if we set $A(t) = Df(t,x(t)),$ then we can rewrite the variational problem \eqref{eq:AbstractVariationalProblem} as the following non-autonomous linear equation
\begin{equation}\label{eq:linearNonautonmous}
\begin{cases}
   \frac{d}{dt}z(t) = Lz(t) + A(t)z(t),
   \\ z(t_0)=z^0.
\end{cases}
\end{equation}
For this equation, we consider the following assumption which for variational equation follows from the continuity of $x$ and \hyperref[as:BV1]{(BV1)},\hyperref[as:BV3]{(BV3)} when $A(t)=Df(t,x(t))$
:
\begin{itemize}
    \item[(BLn)]\label{as:BLn} For some $t_0\in\mathbb{R}$ and $T>0$ the map $A:[t_0,T+t_0]\to\mathcal{L}(Y;W)$ is a continuous function.
\end{itemize}

\begin{lemma}
    If the system \eqref{eq:linearNonautonmous} satisfies conditions \hyperref[as:BL1]{(BL2)},\hyperref[as:BL2]{(BL2)},\hyperref[as:BLn]{(BLn)} then for every $x^0\in Y$ it has the unique solution $z(t):[t_0,t_0+T]\rightarrow Y$. Moreover, the mapping $J(t,t_0)x^0 = z(t)$ is linear and bounded, leading from $Y$ to $Y$ and satisfies the following formula
    \begin{equation*}
        J(t,t_0) = e^{tL} + \int_{t^0}^te^{(t-s)L}A(s)J(s,t_0)ds.
    \end{equation*}
\end{lemma}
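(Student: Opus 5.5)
We argue by a contraction on the Duhamel operator in a weighted sup norm, as in the proofs of Lemma \ref{lem:AbstractProblemLocalExistence} and Lemma \ref{rem:LongTimeExistence}; linearity makes the argument global on $[t_0,t_0+T]$, so no smallness of the data or the time interval is needed. We read the intended hypotheses as (BL1), (BL2), (BLn), and the formula as $J(t,t_0)=e^{(t-t_0)L}+\int_{t_0}^t e^{(t-s)L}A(s)J(s,t_0)\,ds$.

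By (BLn) and compactness of $[t_0,t_0+T]$, we have $K:=\sup_{s\in[t_0,t_0+T]}\norm{A(s)}_{\mathcal{L}(Y;W)}<\infty$. For $z^0\in Y$ define on $C([t_0,t_0+T];Y)$
\begin{equation*}
\mathcal{T}(z)(t)=e^{(t-t_0)L}z^0+\int_{t_0}^t e^{(t-s)L}A(s)z(s)\,ds .
\end{equation*}
First we check that $\mathcal{T}$ maps continuous functions to continuous functions. The first term is continuous by the $C_0$ property. For the integral, $s\mapsto A(s)z(s)$ is continuous into $W$. Continuity of the convolution into $Y$ then follows from (BL2) with the integrable singularity $(t-s)^{-\gamma}$. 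Concretely, split the difference at $t$ and $t+\epsilon$ into a short piece near the upper limit, which is small by (BL2), and a piece where the strong continuity of the semigroup on $Y$ applies.

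Next we equip $C([t_0,t_0+T];Y)$ with the norm $\norm{z}_\beta=\sup_t e^{-\beta(t-t_0)}\norm{z(t)}_Y$. By (BL2),
\begin{equation*}
\norm{\mathcal{T}(z_1)(t)-\mathcal{T}(z_2)(t)}_Y\le M_2e^{\kappa_2T}K\norm{z_1-z_2}_\beta\int_{t_0}^t (t-s)^{-\gamma}e^{\beta(s-t_0)}ds .
\end{equation*}
Using \eqref{eq:integralfracExtEstimation}, we choose $\beta$ large enough that the right-hand side is at most $\tfrac12 e^{\beta(t-t_0)}\norm{z_1-z_2}_\beta$. Then $\mathcal{T}$ is a contraction on the whole complete space, and the Banach fixed point theorem gives a unique solution $z$ of the mild equation.

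Linearity of $z^0\mapsto z(t)$ follows from uniqueness. Indeed, $az_1+bz_2$ is a fixed point of the operator with data $az^0_1+bz^0_2$. For boundedness, take the fixed point $z$. Then
\begin{equation*}
\norm{z}_\beta\le M_1e^{\kappa_1T}\norm{z^0}_Y+\tfrac12\norm{z}_\beta,
\end{equation*}
so $\norm{z(t)}_Y\le 2M_1e^{(\kappa_1+\beta)T}\norm{z^0}_Y$. This shows $J(t,t_0)\in\mathcal{L}(Y)$.

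Finally, the operator identity is obtained by applying the mild equation to each $z^0$ and reading it as $J(t,t_0)z^0=e^{(t-t_0)L}z^0+\int_{t_0}^t e^{(t-s)L}A(s)J(s,t_0)z^0ds$, which is the claimed formula understood in the strong sense. The main obstacle is the continuity of the convolution term into $Y$ despite the $t^{-\gamma}$ singularity in (BL2). We handle it with the splitting argument above, exactly as in Lemma \ref{lem:AbstractProblemLocalExistence}, which we may assume.
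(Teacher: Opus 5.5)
Your proof is correct. The paper states this lemma without proof, but your argument is exactly the one the paper uses for the linear equation in the proof of Lemma \ref{lemma:enclosureLinearBounded} (and in Lemma \ref{rem:LongTimeExistence}): a contraction of the Duhamel operator in the weighted norm $\sup_t e^{-\beta(t-t_0)}\norm{z(t)}_Y$, with $\beta$ chosen via Proposition \ref{eq:integralfracExtEstimation}. You also read the typos in the statement correctly: the hypotheses should be (BL1), (BL2), (BLn), and the formula should have $e^{(t-t_0)L}$ and lower limit $t_0$.
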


\subsection{Variational equation for non-autonomous Chafee–-Infante PDE}
With the same spaces and the same operators $L$ and $f$ as in Section \ref{sec:nonautonomousChafee–InfanteEquation}, we formulate the variational equation for the 
Chafee–Infante PDE. First, let us observe that the mapping 
$Df(t,u):\mathbb{R}\times Y\to \mathcal{L}(Y)$ given by the formula 
\begin{equation*}
    Df(t,u)h =- 3b(t)u^2h, 
\end{equation*}
is the Fr\'{e}chet derivative of $f(t,\cdot):Y\to Y$ at point $u$. Moreover $Df(t,u)$ satisfies conditions \hyperref[as:BV1]{(BV1)}-\hyperref[as:BV1]{(BV3)} which can be easily observed from the formula
\begin{equation*}
    f(t,u+h)-f(t,u) = -3b(t) u^2h -b(t)(3uh^2 +h^3).
\end{equation*}
Therefore, we can consider the system
 \begin{equation}\label{eq:Chafee–InfanteVariational}
    \begin{cases}
        u_t= u_{xx}+\lambda u -b(t)u^3\  \text{ for }\; (x,t)\in (0,\pi)\times(t_0,\infty),\\
        h_t = h_{xx}+\lambda h -3b(t)u^2h \ \text{ for }\; (x,t)\in (0,\pi)\times(t_0,\infty),\\
        u(t,x) = h(t,x) = 0\; \text{for}\; (x,t)\in \{0,\pi\} \times (t_0,\infty),
    \\u(t_0,x) = u^{0}(x),\quad\ h(t_0,x) = h^{0}(x) \textrm{ for}\ x\in(0,\pi).
\end{cases}
\end{equation}
From Lemma \ref{rem:VartionalAsNormalAbstract} we know that the above problem has a unique local solution for all $u^0\in C_0(0,\pi)$ and $h^0\in C_0(0,\pi).$ 

\subsection{Variational equation for Burgers equation with fractional Laplacian}
We formulate the variational equation for the 
Fractional Burgers equation. First, let us observe that the mapping 
$Df(t,u):\mathbb{R}\times Y \to \mathcal{L}(Y;W)$ given by the formula
\begin{equation*}
    Df(t,u)h = 2\nu(uh)_x, 
\end{equation*}
is the Fr\'{e}chet derivative of $f(t,\cdot):Y\to W$ at point $u$. We have
\begin{align*}
        \norm{f(t,u+h)-f(t,u) -2\nu(uh)_x}_{W} = \nu \norm{(h^2)_{x}}_{W}&\leq 2\nu\left(\norm{h_{x}^2}_{L^2} +\norm{hh_{xx}}_{L^2}\right) 
        \\&\leq 4\nu C \norm{h_{xx}}_{L^2}^2 =  4\nu C\norm{h}_{Y}^2,
\end{align*}
for some constant $C>0.$
Therefore,  $Df(t,u)$ satisfies conditions \hyperref[as:BV1]{(BV1)}-\hyperref[as:BV1]{(BV3)}.
So, we can consider the system
 \begin{equation}\label{eq:BurgersVariational}
    \begin{cases}
        u_t= -(-\Delta)^\alpha u+\nu(u^2)_x +g(t,x)\  \text{ for }\; (x,t)\in (0,\pi)\times(t_0,\infty),\\
        h_t = -(-\Delta)^\alpha h+2\nu(uh)_x\ \text{ for }\; (x,t)\in (0,\pi)\times(t_0,\infty),\\
        u(t,x) = h(t,x) = 0\ \text{for}\; (x,t)\in \{0,\pi\} \times (t_0,\infty),
    \\u(t_0,x) = u^{0}(x),\quad\ h(t_0,x) = h^{0}(x)\  \textrm{ for}\ x\in(0,\pi).
\end{cases}
\end{equation}
Lemma \ref{rem:VartionalAsNormalAbstract} ensures that the above problem has a unique local solution for all $u^0\in Y$ and $h^0\in Y.$ 
\section{Algorithm of integration}\label{sec:AlgorithmIntegration}

\subsection{Overview of \texorpdfstring{$C^0$}{Lg} algorithm}\label{sect22}

 For a given local processes  $\varphi:\Omega\to Y,$ a set $X(t_0)$ and a time step $\tau>0$, the goal of $C^0$ algorithm is to effectively construct another set $X(t_0+\tau)$, such that $\varphi(\tau+t_0,t_0 ,X(t_0)) \subset X(t_0+\tau)$. Moreover, it has to ensure that $\varphi(\tau+t_0,t_0,x)$ is well-defined for every $x \in X(t_0)$.

As the space $Y$ is infinite dimensional, we need to have a  suitable representation of sets from this space. 
In the abstract problem \eqref{eq:AbstractProblem} together with conditions \hyperref[as:BL1]{(BL1)}, \hyperref[as:BL2]{(BL2)}, \hyperref[as:Bf]{(Bf)}, \hyperref[as:BH]{(BH)}, \hyperref[as:BD]{(BD)}, we assume that  our phase space $Y$ is embedded in a Hilbert space $H$ with the basis $\{e_k\}_{k=1}^\infty .$ 
We represent $H = H_P\oplus H_Q.$
where $H_P = \text{span}\{e_1,\dots,e_n\} \cong \mathbb{R}^n$ and $H_Q$ is an orthogonal complement of $H_P$ in $H.$ By $P,Q$ we will denote orthogonal projections on the spaces $H_P$ and $H_Q.$ 
Namely we represent the sets $X(t_0), X(t_0+\tau) \subset Y$ as
\begin{equation*}
    X(t_0) =  X_{P}(t_0)  + X_{Q}(t_0)\qquad X(t_0+\tau) =  X_{P}(t_0+\tau) + X_{Q}(t_0+\tau),
\end{equation*}
where $X_{P}(t_0), X_{P}(t_0+\tau) \subset H_P$ are sets in finite dimensional space and $X_{Q}(t_0),X_{Q}(t_0+\tau) \subset H_Q$ are infinite dimensional sets, which need some finite representation.
We realize it by considering some inequalities which are uniform with respect to coefficients in the Fourier series. 
This decomposition of the set will be called a $P, Q$ representation. We also decompose nonlinearity into functions $f^1:\mathbb{R}\times (Y\cap H_{P})\to Y\cap H_{P}$ and $f^2:\mathbb{R}\times (Y\cap H_{P})\times (Y\cap H_{Q})\to Y$ which satisfy 
\begin{equation}\label{eq:fdecomposition}
    f(t,x) = f^1(t,Px)+f^2(t,Px,Qx), 
\end{equation}
where $f^1$ is the Galerkin projection of the nonlinearity $f$ on the $H_P$ space (that is, $f^1(t,Px)= Pf(t,Px)$).
For a more detailed overview of the $C^0$ algorithm, see \cite{BrusselatorPrzyjetaPraca}.

\subsection{Overview of \texorpdfstring{$C^1$}{Lg} algorithm} \label{sec:Overview_C1_algorithm}
In the computer assisted results on the local attractivity of  periodic solutions for considered PDEs presented in Section \ref{sec:Chafee-Infante results}, we use Lemma \ref{lem:AbstractNonAutonomousAttracting}. To use this result, we need to perform a computation that allows us to efficiently estimate the norm of solution to the variational equation associated with the original problem. To this end, we use the $C^1$ integration algorithm presented below.

From the $C^0$ algorithm, we get estimate (in terms of the evolving sets) on the evolution of the $\varphi(t,t_0,x)$ for $x \in X(t_0).$ But the $C^0$ algorithm does not give the information about the behavior of the variational equation, that is we do not control the Fr\'{e}chet derivative $D_{x^0}\varphi(t,t_0,x^0) = V(t,t_0,x^0).$ 

To control this mapping we can  apply $C^0$ algorithm to the system \eqref{eq:AbstractVariationalProblem}, to get estimation on the evolution of \eqref{eq:AbstractVariationalProblem} for some given $h^0\in Y,$ or, in other words, the estimation on  $V(t,t_0,X(t_0))h^0.$ If the phase space were finite-dimensional, then this approach would allow us to obtain information about the entire linear mapping. Namely, if $\{e_1,\ldots,e_n\}$ were the basis of this phase space, then an estimate on $V(t,t_0,X(t_0))e_i$ for every $i\in\{1,\ldots,n\}$ would give an estimation on the mapping $V(t,t_0,X(t_0)).$ But, as we work with infinite dimensional phase spaces, we need to adapt this approach. To this end, we consider sets $X^h(t_0)\subset Y\cap H_Q$ such that for some $n\in\mathbb{N}^0$ and for every $h\in Y$ there exist $a_1,\ldots,a_n,a_{n+1}\in\mathbb{R}$ such that
\begin{equation*}
    h\in a_1e_1+\ldots + a_ne_n + a_{n+1}X^h(t_0).
\end{equation*}
Then from the linearity of $V(t,t_0,x)$ we obtain that
\begin{equation*}
    V(t,t_0,x^0)h\in a_1V(t,t_0,x^0)e_1 +\ldots +a_nV(t,t_0,x^0)e_n + a_{n+1}V(t,t_0,x^0)X^h(t_0). 
\end{equation*}
The set $X^h(t_0)$ can potentially be unbounded in the phase space $Y$, which is the main difficulty. \textcolor{black}{In particular, in our proof of Theorem \ref{th:Chaffe-InfanteAtractingOrbit1} on attractivity of the periodic solution for the Chafee--Infante system  we use as the initial data the set which has form
\begin{equation*}
   X^{h}(t_0)= \left\{x\in C_0(0,\pi): x_k\in [-1,1] \text{ for } k\geq 3  \text{ and } x_k = 0 \text{ for } k = 1,2,3\right\},
\end{equation*}
which is unbounded in $C_0(0,\pi)$.
}
To overcome this difficulty, we develop the estimates of the convolution of the cosine and sine series, in which the coefficients of the sine series can be unbounded or slowly decaying (see Lemmas \ref{lem:costimesSinDiverging} and \ref{lem:costimesSinSlowConvering}). We also prove Lemma \ref{lemma:enclosureLinearBounded}, which allows us to find the enclosure for  unbounded sets of  initial data for the linear non-autonomous equations in  Banach spaces. Combining these two tools, we  rigorously integrate forward in time the unbounded sets.
So in general, we firstly estimate the mapping $V(t,t_0,x)h$ for $h\in H_P$ by applying the $C^0(0,\pi)$ algorithm for the basis of $H_P.$ Then, for $h\in H_Q\cap Y$ we use the estimate coming from integration of the set $X^h(t_0).$  In a similar way, for the fractional Burgers equation we consider the set 
\begin{equation*}
    X^{h}(t_0)=\left\{x\in H_\text{odd}^2(-\pi,\pi): x_k\in \frac{[-1,1]}{k^2} \text{ for } k\in\mathbb{N}\right\},
\end{equation*}
which is unbounded in the space $Y=H^2_\text{odd}(-\pi,\pi).$
\subsection{Representation of sets}\label{sec:descritionSet}
The important role in the algorithms is played by the sequences $\{V_i\}_{i=1}^\infty$ of intervals which we call infinite interval vectors. For a given infinite interval vector $V$ we denote the $i$-th interval by $V_i$ and its left and right ends by
$V_i^-,V_i^+$, respectively.
\textcolor{black}{
 \begin{definition}
     For a given  infinite interval vectors $\{V_i\}_{i=1}^\infty$ we call the set
      \begin{equation*}
          \{v\in Y : v_i\in V_i \}
      \end{equation*} a  representation of infinite interval vector.
 \end{definition}
}
It is possible that representation of an interval vector is not a bounded set in $Y$. 
Whenever it will not lead to confusion we  use the same notation for infinite interval vectors and their representations.
We define several useful operations on the infinite interval vectors. First, for infinite interval vector  $V$ we
define the quantities
\begin{equation*}
    V^- = \{[V^-_i,V^-_i]\}_{i=1}^\infty,
    \quad
    V^+ = \{[V^+_i,V^+_i]\}_{i=1}^\infty.
\end{equation*}
For a given interval $I$ the we define multiplication an infinite interval vector $V$ by the interval $I$ as
\begin{equation*}
I\, V= V\, I =  \{I V_i\}_{i=1}^\infty.
\end{equation*}
For two infinite intervals vectors
$V$ and
$W$ we define their sum and element-wise product as
\begin{equation*}
    V+W = \{V_i + W_i\}_{i=1}^\infty,\quad V*W = \{V_iW_i\}_{i=1}^\infty.
\end{equation*}
We say that vector $V$ is a subset of $W$ and denote by
$V\subset W$ if and only if
$V_i\subset W_i$ for every $i\in\mathbb{N}.$
Additionally we define
$V\subset_\text{int} W$ if and only if
$V_i\subset \text{int} W_i$ for every $i\in\mathbb{N}.$ We define the convex hull of two infinite intervals vectors as
\begin{equation*}
  \text{conv}\{V,W\} =\{ \text{conv}\{V_i\cup W_i\} \}_{i=1}^\infty.
\end{equation*}
The intersection of two infinite vectors is defined in the following way
\begin{equation*}
  V \cap W =\{ V_i\cap W_i \}_{i=1}^\infty.
\end{equation*}

In the algorithm we consider such sets $X = X_P + X_Q$, for which there exist infinite interval vectors whose representation contains the set $X.$
\Doktorat{
\input{Wersja1/Algorytm/SetsForBrusselator}
}

In the case of the Chafee-Infante equation we consider the infinite interval vector $U$ with the polynomial decay i.e. for which there exists and $s>1$ and $n\in\mathbb{N}$ such that for every sequence $u_i\in U_i$ we have
\begin{equation}\label{eq:representationInfiniteVectorsChafeInfante}
    u_i\in \frac{[C_U^-,C_U^+]}{i^s}\quad
    \text{for $i\geq n$},
\end{equation}
where $C_U^-\leq C_U^+$ are constants. \Doktorat{The sets $X=X_P+X_Q$ are in the similar form as in Brusselator system. Namely $X_P$ are the subset of $H_P = \text{span}\{\sin(x),\ldots,\sin(kx)\},$ and $X_Q$ are given by the formula \eqref{eq:representationInfiniteVectorsChafeInfante} with $s>1.$ 
Similarly, as in the Brusselator system we can restrict to the space where $u$ has only odd nonzero coefficients.}\article{
For the Chafee--Infante equation the sets $X_P,$ from decomposition of $X,$ are the subset of $H_P = \text{span}\{\sin(x),\ldots,\sin(kx)\},$ and $X_Q$ are given by the formula \eqref{eq:representationInfiniteVectorsChafeInfante} with $s>1.$ Additionally we can restrict space $C^0$ to the subspace of functions which have only odd nonzero coefficients, as it is invariant for the equation.
}

For the variational equation for the Chafee--Infante system we \Doktorat{again }use pair of infinite interval vectors $(U,U^h)$ with polynomial bounds, which means that for some $n\in\mathbb{N}$ and $s_1,s_2\in\mathbb{R}$ and every sequence $u_i\in U_i$ and $h_i\in U^h_i$ we have
\begin{equation}\label{eq:representationInfiniteVectorsChafeInfanteVariational}
    u_i\in \frac{[C_U^-,C_U^+]}{i^{s_1}},\quad
    h_i\in \frac{[C_{U^h}^-,C_{U^h}^+]}{i^{s_2}},\quad
    \text{for $i\geq n$},
\end{equation}
where $C_U^-\leq C_U^+$ and $C_{U^h}^-\leq C_{U^h}^+$ are constants. Note that now $U$ and $U^h$ can have different rate defining the polynomial bound. This is crucial for integrating the unbounded sets and $C^1$ computations for this variational equation. \article{The pair $(U,U^h)$ can be easily re-indexed into one interval vector.}
The element $(u,h)\in H_Q\cap Y$ belongs to $X_Q$ if it satisfies \eqref{eq:representationInfiniteVectorsChafeInfanteVariational} with $s_1>1,$ and $s_1+s_2 > 1$ in case when $s_2\leq 0,$ and $s_1-s_2 > 1$ in the case when $s_2\in(0,1],$ and no further restrictions if $s_2\geq 1.$ Note that this allows that the representation of $ U^h_i$ is a possibly unbounded set.

For the Burgers equation we can use the same representation \eqref{eq:representationInfiniteVectorsChafeInfante} as for the Chafee--Infante equation.
For the variational equation associated with the Burgers equation we can also use the representation \eqref{eq:representationInfiniteVectorsChafeInfanteVariational}.
For the rates $s_1$ and $s_2$, we assume $s_1 > 1$ and $s_2 > 2$.  
\subsection{Computation of nonlinear terms}\label{sect24} In the course of the algorithm for a given set $X=X_P+ X_Q$ and the time interval $[t_0,t_0+\tau]$, we need to compute the set $X^1$ such that $ f([t_0,t_0+\tau],X)\subset X^1.$
This set, represented as $X^1 = X_P^1+ X_Q^1$  constitutes the estimates for $f([t_0,t_0+\tau],X)$ and hence it should be as small as  possible.
The set $X^1$ is used in further steps of algorithm.

\Doktorat{
For the Brusselator problem we have 
\begin{equation*}
    f(u,v) = (u^2v + A\sin(x),Bu-u^2v).
\end{equation*}
For functions $(u,v)\in Y$ the components of $f$ can be represented in the following sine Fourier series with the coefficients $a_i, b_i$ dependent on $u, v$
\begin{equation*}
    u^2v + A\sin(x) = \sum_{i=1}^\infty a_i \sin(i x),\quad Bu-u^2v = \sum_{i=1}^\infty b_i \sin(i x).
\end{equation*}
Set $X^1_P$ is represented as the cuboid (or parallelepiped) in $H_P$ and $X^1_Q$  is  described by the polynomial decay of  Fourier coefficients.
The first step of finding $X^1$ is estimating the square of $u$ which is represented in the cosine Fourier series. For this purpose we use Lemma  \ref{lem:sinTimesSin}.
Having computed the coefficients of $u^2$ we need to find the coefficients of $(u^2)v.$ To this end we use Lemma \ref{lem:cosTimesSin}. Finally, we use Lemma
\ref{lem:add} to compute the representation of sums of particular terms which appear in the definition of $f$.
We also need to compute the image $L(X)$ but as $L$ is a diagonal operator we only need to multiply every given coefficient by the corresponding eigenvalue of $L.$ The result of such multiplication is given in Lemma \ref{lem:mult}.
Additionally in the algorithm we  need the decomposition
 \begin{equation}\label{eq:fdecomposition}
     f(t,p+q) = f(t,p) +f_2(t,p,q),
 \end{equation}
 where $p\in H_P$, $q\in H_Q$ and $f_2(t,p,q) = f(t,p+q)-f(t,p)$.  This decomposition is required for the formulation of the differential inclusion.
For the Brusselator system we can write
$f(u_P+u_Q,v_P+v_Q) = f(u_P,v_P) + f_2(u_P,v_P,u_Q,v_Q)$ where
\begin{equation*}
    f_2(u_P,v_P,u_Q,v_Q) = (
    (2u_Pu_Q+u_Q^2)(v_P+v_Q)+ u_P^2v_Q,
    Bu_Q-(2u_Pu_Q+u_Q^2)(v_P+v_Q)- u_P^2v_Q
    ).
\end{equation*}}
The nonlinearity in the Chafee--Infante problem, for which we perform such procedure, is given by the following function
\begin{equation*}
    f(t,u) = -b(t)u^3.
\end{equation*}
\Doktorat{
To compute $u^3$ we use Lemmas \ref{lem:sinTimesSin} and
\ref{lem:cosTimesSin} in the similar way as in the Brusselator system.} The first step of finding $X^1$ is estimating the square of $u$ which is represented in the cosine Fourier series. For this purpose we use Lemma  \ref{lem:sinTimesSin}.
Having computed the coefficients of $u^2$ we need to find the coefficients of $(u^2)u,$ using Lemma \ref{lem:cosTimesSin}. Then we  multiply obtained estimate on $u^3$ by the estimate of the term $-b(t)$ on a given time interval, using the Lemma \ref{lem:mult}.
\begin{comment}
   The function $f_2$ from the decomposition \eqref{eq:fdecomposition} has the following form
\begin{equation*}
    f_2(t,u_P,u_Q)= -(A\sin(2\pi t) + B)(u_Q(u_Q+2u_P)(u_P+u_Q) + u_P^2u_Q).
\end{equation*} 
\end{comment}

In the case of variational equation for the Chafee--Infante problem, we have that
\begin{equation*}
    f(t,u,h) = -b(t)(u^3,3u^2h).
\end{equation*}
The main difficulty here lies in computing the term $u^2h.$ In case when $s_1$ and $s_2$ are both larger than $1$ we can use Lemmas \ref{lem:sinTimesSin} and
\ref{lem:cosTimesSin} with $s= \min\{s_1,s_2\} .$ In the case when $s_2\leq 1$ we  first compute $u^2$ and then use Lemma \ref{lem:costimesSinDiverging} when $s_2\leq 0$ and $s_1+s_2>1$ or Lemma \ref{lem:costimesSinSlowConvering} when $s_2\in (0,1]$ and $s_1-s_2>1.$ So in this case we have 
\begin{equation*}
  (u^2h)_i\in \frac{[D^-,D^+]}{i^{s_2}},  
\end{equation*}
 where constants $D^-\leq D^+$ follow from the application of Lemmas \ref{lem:cosTimesSin}, \ref{lem:sinTimesSin}, \ref{lem:costimesSinSlowConvering}, and $s_2$ is the same as for $h.$ 
 
 For the Burgers equation, we need to compute nonlinear term $(u^2)_x.$ We firstly compute $u^2$ using Lemma \ref{lem:cosTimesSin}, furthermore we estimate the space derivative which is straightforward. For the variational equation, we need to estimate $(uh)_x.$ This is simpler than in the case of the Chafee--Infante equation, because we start with the set with decay $s_2=2.$ We use Lemma \ref{lem:sinTimesSin} to obtain the estimate on $(uh)$ as we work with decays $s_1,s_2\geq 2.$ Then we obtain the estimate for the derivative.

\subsection{Computation of  the enclosure}\label{sec:enclosure} We start this section with the definition of an enclosure.
\begin{definition}
The set $X([t_0,t_0+\tau])$ is a enclosure of the set $X^0 \subset Y$ for times $t_0\in\mathbb{R}$ and $\tau > 0$ if $\varphi(t,t_0,X^0) \subset X([t_0,t_0+\tau])$ for every $t\in [t_0,t_0+\tau]$.
\end{definition}
The following Lemma can be used to validate if for the given set of initial data $X^0$ the set $X^0+Z$ is an enclosure for the abstract problem \eqref{eq:AbstractProblem}.

\begin{lemma}\label{lemma:enclosure}
 Assume that conditions \hyperref[as:BL1]{(BL1)}, \hyperref[as:BL2]{(BL2)}, \hyperref[as:Bf]{(Bf)}, \hyperref[as:BH]{(BH)}, \hyperref[as:BD]{(BD)} hold and let $\{X^0_i\}_{i=1}^\infty$ be a countable family of intervals $X^0_i = [x_i^-,x_i^+]$ such that the set $X^0:= \set{x\in Y: x_i\in X_i^0}$ is bounded in $Y$. Moreover, define another set
$Z:= \set{z\in Y: z_i\in Z_i},$ also bounded in $Y$, where $Z_i = [z^-_i,z^+_i]$ are intervals that contain zero. We assume that for some $t_0\in \mathbb{R},$ $\tau>0$ and every $i\in \mathbb{N}$ there holds
\begin{equation}\label{eq:EnclosureCondition}
    [g_i^-,g_i^+]\cap
    [h_i^-,h_i^+]\subset \textrm{\rm int } Z_i,
\end{equation}
where
\begin{equation}\label{eq:gminusgplus}
        g^-_i =
        \min_{s\in[0,\tau]}
        \left[(e^{\lambda_i s} - 1)
        \left(\frac{f^-_i}{\lambda_i} +x^-_i\right)\right],
        \quad
        g^+_i =
        \max_{s\in[0,\tau]}
        \left[(e^{\lambda_i s} - 1)
        \left(\frac{f^+_i}{\lambda_i} +x^+_i\right)\right],
    \end{equation}
\begin{equation}\label{eq:hminushplus}
    h^-_i = \min_{s\in[0,\tau]}
        s(\lambda_i(x_i^-+z_i^-) +f_i^-) ,
        \quad
        h^+_i =
        \max_{s\in[0,\tau]}
        s(\lambda_i(x_i^+ + z_i^+) +f_i^+) ,
\end{equation}
     and $f^-_i,f^+_i\in\mathbb{R}$  satisfy
    \begin{equation}
        f^-_i\leq f_i([t_0,t_0+\tau],X^0+Z)\leq f^+_i.
    \end{equation}
Then for every $x^0\in X^0$ there exists a continuous function $x:[t_0,t_0+\tau]\to Y$
which is the unique solution to $\eqref{eq:AbstractProblem}.$ Moreover, for every $t\in[t_0,t_0+\tau]$ we have
\begin{enumerate}
    \item  $x_i(t)\in x^0_i+[0,t-t_0]F_i([t_0,t_0+\tau],X^0+Z),$ for every $i\in \mathbb{N},$
    \item $x^-_i+g_i^-\leq x_i(t)\leq x^+_i+g_i^+$ for every $i\in \mathbb{N},$
    \item $x_i(t)\in e^{(t-t_0) \lambda_i} x^0_i + \frac{e^{\lambda_i (t-t_0)} -1}{\lambda_i}[f^-_i,f^+_i]$ for every $i\in \mathbb{N}.$
\end{enumerate}
\end{lemma}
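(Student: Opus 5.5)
The plan is a continuation (bootstrap) argument in the Fourier coordinates. The enclosure condition \eqref{eq:EnclosureCondition} makes the set $X^0+Z$ self-consistent: whenever the trajectory stays in $X^0+Z$, each coefficient is pushed strictly inside that set. Fix $x^0\in X^0$. By Lemma \ref{lem:AbstractProblemLocalExistence} there is a unique local mild solution $x$ on a maximal interval $[t_0,t_{max})$ (Lemma \ref{re:procesCon}). Let
\[
t^*=\sup\{t\in[t_0,\min(t_0+\tau,t_{max})): x(s)\in X^0+Z \text{ for all } s\in[t_0,t]\}.
\]
This set is nonempty, since $x(t_0)=x^0\in X^0\subset X^0+Z$ because $0\in Z_i$. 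On $[t_0,t^*]$ we have $f_i(s,x(s))\in[f_i^-,f_i^+]$, and by Lemma \ref{lem:AbstractFourierCoef} each $x_i$ satisfies the scalar ODE $x_i'=\lambda_ix_i+f_i(s,x(s))$.

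Next, derive the coefficient bounds (1)--(3) on $[t_0,t^*]$.
\begin{itemize}
\item Writing $x_i(t)-x_i^0=\int_{t_0}^t F_i(s,x(s))\,ds$ and using the mean value theorem for integrals gives (1).
\item The variation of constants formula $x_i(t)=e^{\lambda_i(t-t_0)}x_i^0+\int_{t_0}^te^{\lambda_i(t-s)}f_i\,ds$ with $f_i\in[f_i^-,f_i^+]$ gives (3). This uses $\int_{t_0}^t e^{\lambda_i(t-s)}ds=(e^{\lambda_i(t-t_0)}-1)/\lambda_i>0$ for any sign of $\lambda_i\neq0$.
\item Rewriting (3) as $x_i(t)-x_i^0=(e^{\lambda_i s}-1)(x_i^0+\tilde f_i/\lambda_i)$, with $s=t-t_0$ and $\tilde f_i\in[f_i^-,f_i^+]$, and taking extremes over $x_i^0\in[x_i^-,x_i^+]$ gives $x_i(t)-x_i^0\in[g_i^-,g_i^+]$, hence (2). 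Some care is needed with the sign of $e^{\lambda_i s}-1$ relative to $1/\lambda_i$, and with which endpoint realises the extreme. I expect a careful case split on the sign of $\lambda_i$ to show that $(e^{\lambda_is}-1)/\lambda_i\ge0$ makes the $f$-term monotone, and that the $x^0$-term should be handled so that the stated min/max formulas dominate.
\item Similarly (1) gives $x_i(t)-x_i^0\in[h_i^-,h_i^+]$, using $x(s)\in X^0+Z$ to bound $\lambda_i x_i(s)$ by $\lambda_i(x_i^\pm+z_i^\pm)$. Here the sign of $\lambda_i$ again matters; the formulas as stated appear tailored so that the extremes are attained at the endpoints.
\end{itemize}
Intersecting the two results, $x_i(t)-x_i^0\in \operatorname{int}Z_i$ for every $i$ and every $t\in[t_0,t^*]$.

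Finally, close the continuation argument. It remains to show $t^*=t_0+\tau$ and that $t_{max}>t_0+\tau$. Since $X^0+Z$ is bounded in $Y$, the solution stays bounded on $[t_0,t^*)$, so the blow-up alternative (from the local existence lemma with uniform existence time on bounded sets) gives $t_{max}>t^*$. If $t^*<t_0+\tau$, continuity of $x$ in $Y$ and continuity of the coordinate maps imply $x_i(t^*)\in x_i^0+\operatorname{int}Z_i$. One then needs $x(t)\in X^0+Z$ for $t$ slightly beyond $t^*$, which contradicts maximality.

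\textbf{Main obstacle.} Infinitely many coordinates cannot be handled by pointwise openness alone, because $\bigcap_i$ of open conditions need not be open in $Y$. I would first try to avoid this by running the a priori argument on the set $X^0+Z$ directly. Fix any $t\le t_0+\tau$ with the trajectory in $X^0+Z$ on $[t_0,t]$, and use the coefficient bounds with $\tau$ replaced by a slightly larger time window within which the solution remains in a bounded neighbourhood. Alternatively, use a strict uniform margin: apply the estimates with $Z$ replaced by $Z^\varepsilon=\{z_i\in[z_i^- - \varepsilon_i,z_i^+ + \varepsilon_i]\}$ and pass to the limit. If neither works cleanly, I would fall back on a fixed point/Schauder-type argument for the Duhamel operator on the closed convex set $\{y\in C([t_0,t_0+\tau];Y): y(t)-x^0\in Z\}$. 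This uses the fact that the coordinate computations above show the operator maps that set into itself, while uniqueness comes from Lemma \ref{lem:AbstractProblemLocalExistence}.
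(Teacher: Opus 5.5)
Your proposal has a genuine gap, and you flag it yourself: the continuation step at $t^*$ is never closed. The widths of the $Z_i$ typically decay (e.g. like $i^{-s}$), so there is no uniform margin. The set $\{x:\ x_i\in\operatorname{int}(X^0_i+Z_i)\ \forall i\}$ is not open in $Y$, so continuity of $x(\cdot)$ in $Y$ does not keep $x(t)$ in $X^0+Z$ for $t>t^*$. Enlarging to $Z^\varepsilon$ or to a longer time window does not help either. The hypotheses bound $f_i$ only on $X^0+Z$, and any perturbation bound obtained from (Bf) is uniform in $i$ while the margins are not.

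The paper avoids continuation altogether. It refers to \cite[Lemma 2.7]{BrusselatorPrzyjetaPraca}; the same scheme is written out in the proof of Lemma \ref{lemma:enclosureLinearBounded}. It applies Banach's fixed point theorem to the Duhamel operator $\mathcal T$ on $S_\tau=\{y\in C([t_0,t_0+\tau];Y):\ y(t_0)=x^0,\ y(t)\in X^0+Z \text{ for all } t\}$.
\begin{itemize}
\item For $y\in S_\tau$, the bounds $f_i(s,y(s))\in[f_i^-,f_i^+]$ hold for all $s$ and all $i$ simultaneously. So each $\mathcal T(y)_i$ solves a decoupled scalar linear ODE with bounded forcing.
\item Your coordinate estimates then give $\mathcal T(S_\tau)\subset S_\tau$. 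For the $h$-part you only need a scalar first-exit argument, where openness is harmless.
\item $S_\tau$ is nonempty (it contains the constant curve $x^0$). It is closed because the coordinate functionals are continuous on $Y\hookrightarrow H$.
\item The contraction uses (Bf) with $R=\sup_{X^0+Z}\|\cdot\|_Y$, which is finite precisely because $X^0$ and $Z$ are bounded. Combine this with (BL2) in the norm $\sup_t e^{-\alpha(t-t_0)}\|y(t)\|_Y$, choosing $\alpha$ by Proposition \ref{eq:integralfracExtEstimation}.
\item The fixed point is a mild solution on all of $[t_0,t_0+\tau]$, unique by Lemma \ref{lem:AbstractProblemLocalExistence}. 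Conclusions (1)--(3) follow because it never leaves $X^0+Z$.
\end{itemize}
Schauder is neither needed nor available here: no compactness is assumed, and it would not give uniqueness.

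Your fallback also uses the wrong invariant set, because your coordinate estimate claims too much. The claim $x_i(t)-x_i^0\in[g_i^-,g_i^+]$, and hence $x(t)\in x^0+Z$, is false when $\lambda_i<0$. In $x_i(t)-x_i^0=(e^{\lambda_i\sigma}-1)(x_i^0+\tilde f_i/\lambda_i)$ the coefficient of $x_i^0$ is then negative. Take $\lambda_i=-1$, $f_i\equiv 0$, $X_i^0=[-1,1]$. Then $g_i^\pm=0$, so the hypothesis holds for every $\tau$ and every $Z_i$ with $0$ in its interior. Yet from $x_i^0=1$ one gets $x_i(t)-x_i^0=e^{-(t-t_0)}-1$, which leaves any small $Z_i$.

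What is true comes from monotonicity. The expression $x_i(t)=e^{\lambda_i\sigma}x_i^0+\frac{e^{\lambda_i\sigma}-1}{\lambda_i}\tilde f_i$ is nondecreasing in both $x_i^0$ and $\tilde f_i$. Hence $x_i(t)\ge x_i^-+(e^{\lambda_i\sigma}-1)(x_i^-+f_i^-/\lambda_i)\ge x_i^-+g_i^-$, which is exactly (2). Combined with $x_i(t)\in x_i^0+[h_i^-,h_i^+]$, this gives $x_i(t)\in[x_i^-+\max(g_i^-,h_i^-),\,x_i^++\min(g_i^+,h_i^+)]\subset\operatorname{int}(X_i^0+Z_i)$. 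So the invariant object is $X^0+Z$, and $\{y:\ y(t)-x^0\in Z\}$ is not mapped into itself.

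Your hesitation about the sign in $h_i^\pm$ is also justified. For $\lambda_i<0$, the term $\lambda_i(x_i+z_i)$ ranges over $[\lambda_i(x_i^++z_i^+),\lambda_i(x_i^-+z_i^-)]$. So $h_i^\pm$ must be read in interval arithmetic, as the algorithm does through $V^L$; taken literally, the displayed formulas are not valid bounds.
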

The proof of above Lemma is simple adaptation of \cite[Lemma 2.7]{BrusselatorPrzyjetaPraca} to the case of non autonomous nonlinearity.

The next lemma is a modification of the previous lemma to the case of non-autonomous linear equation \eqref{eq:linearNonautonmous}. The main difference is that now the sets $X^0$ and $Z$ do not have to be bounded in $Y.$ This allows to get uniform bounds for evolution from any initial condition in  $Y.$ The possibility to handle unbounded sets follows from the fact that for linear equation we can globally estimate the Lipschitz constant.
\begin{lemma}\label{lemma:enclosureLinearBounded}
    Assume that conditions \hyperref[as:BL1]{(BL1)}, \hyperref[as:BL2]{(BL2)}, \hyperref[as:BLn]{(BLn)}, \hyperref[as:BH]{(BH)}, \hyperref[as:BD]{(BD)} hold and let $\{X^0_i\}_{i=1}^\infty$ be a countable family of intervals $X^0_i = [x_i^-,x_i^+]$ such that the set $X^0:= \set{x\in Y: x_i\in X_i^0}$. Moreover, define another set
$Z:= \set{z\in Y: z_i\in Z_i},$ in $Y$, where $Z_i = [z^-_i,z^+_i]$ are intervals that contain zero. We assume that for some $t_0\in \mathbb{R},$ $\tau>0$ and every $i\in \mathbb{N}$ there holds
\begin{equation}\label{eq:EnclosureCondition1}
    [g_i^-,g_i^+]\cap
    [h_i^-,h_i^+]\subset \text{\rm int } Z_i,
\end{equation}
where
\begin{equation}\label{eq:gminusgplus1}
        g^-_i =
        \min_{s\in[0,\tau]}
        \left[(e^{\lambda_i s} - 1)
        \left(\frac{f^-_i}{\lambda_i} +x^-_i\right)\right],
        \quad
        g^+_i =
        \max_{s\in[0,\tau]}
        \left[(e^{\lambda_i s} - 1)
        \left(\frac{f^+_i}{\lambda_i} +x^+_i\right)\right],
    \end{equation}
\begin{equation}\label{eq:hminushplus1}
    h^-_i = \min_{s\in[0,\tau]}
        s(\lambda_i(x_i^-+z_i^-) +f_i^-) ,
        \quad
        h^+_i =
        \max_{s\in[0,\tau]}
        s(\lambda_i(x_i^+ + z_i^+) +f_i^+) ,
\end{equation}
     and $f^-_i,f^+_i\in\mathbb{R}$  satisfy
    \begin{equation}
        f^-_i
        \leq 
        (A([t_0,t_0+\tau])(X^0+Z))_i
        \leq 
        f^+_i.
    \end{equation}
Then for every $x^0\in X^0$ there exists a continuous function $x:[t_0,t_0+\tau]\to Y$
which is a unique solution to \eqref{eq:linearNonautonmous}. Moreover for every $t\in[t_0,t_0+\tau]$ we have
\begin{enumerate}
    \item  $x_i(t)\in x^0_i+[0,t-t_0]F_i([t_0,t_0+\tau],X^0+Z),$ for every $i\in \mathbb{N},$
    \item $x^-_i+g_i^-\leq x_i(t)\leq x^+_i+g_i^+$ for every $i\in \mathbb{N},$
    \item $x_i(t)\in e^{(t-t_0) \lambda_i} x^0_i + \frac{e^{\lambda_i (t-t_0)} -1}{\lambda_i}[f^-_i,f^+_i]$ for every $i\in \mathbb{N}.$
\end{enumerate}
\end{lemma}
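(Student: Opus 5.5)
The plan is to run the argument of Lemma \ref{lemma:enclosure} (that is, \cite[Lemma 2.7]{BrusselatorPrzyjetaPraca}), now for the linear non-autonomous problem \eqref{eq:linearNonautonmous}. Boundedness of $X^0$ and $Z$ was used there only to get a uniform Lipschitz constant for $f$ on a bounded set, and hence a lower bound on the existence time. Here the nonlinearity is $A(t)z$ with $A$ continuous into $\mathcal{L}(Y;W)$ by \hyperref[as:BLn]{(BLn)}. It is therefore globally Lipschitz with constant $\sup_{t\in[t_0,t_0+\tau]}\norm{A(t)}_{\mathcal{L}(Y;W)}$, uniformly in the initial condition.

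\textbf{Step 1: existence on the whole interval.} For each $x^0\in X^0$, which is an element of $Y$ even when $X^0$ is unbounded, I will show that the Duhamel operator
\begin{equation*}
\mathcal{T}(z)(t)=e^{L(t-t_0)}x^0+\int_{t_0}^t e^{L(t-s)}A(s)z(s)\,ds
\end{equation*}
is a contraction on all of $C([t_0,t_0+\tau];Y)$. I will use the weighted norm $\sup_t e^{-\beta(t-t_0)}\norm{z(t)}_Y$, bound the integral term by (BL2), and choose $\beta$ via \eqref{eq:integralfracExtEstimation}, exactly as in Lemma \ref{rem:LongTimeExistence}. This gives a unique continuous solution on $[t_0,t_0+\tau]$ with no smallness condition. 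By Lemma \ref{lem:AbstractFourierCoef}, applied with $f(t,z)=A(t)z$, each coefficient then satisfies $\dot x_i=\lambda_i x_i+(A(t)x(t))_i$.

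\textbf{Step 2: the solution stays in $X^0+Z$.} This is the main step. I will argue by contradiction using a first exit time. Let $t^*=\sup\{t\in[t_0,t_0+\tau]: x(s)\in X^0+Z \text{ for } s\in[t_0,t]\}$, where membership is understood coefficientwise. On $[t_0,t^*]$ we have $f_i^-\le (A(s)x(s))_i\le f_i^+$. Integrating the scalar ODE in two ways gives two bounds. The variation of constants formula yields $x_i(t)-x_i^0\in[g_i^-,g_i^+]$ (more precisely, the bound lies between $x_i^-+g_i^-$ and $x_i^++g_i^+$). The mean value estimate $x_i(t)-x_i^0\in (t-t_0)\bigl(\lambda_i(X^0_i+Z_i)+[f_i^-,f_i^+]\bigr)$ yields $[h_i^-,h_i^+]$. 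By \eqref{eq:EnclosureCondition1}, the deviation lies in $\text{int}\, Z_i$ for every $i$.

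The obstacle is that there are infinitely many coordinates. Strict inclusion in each $Z_i$ does not by itself give a uniform margin, so one cannot directly conclude that $x(t)$ stays in $X^0+Z$ slightly beyond $t^*$. Continuity of $x$ in $Y$ controls each coefficient only individually. To handle this I would first try to avoid the exit-time argument altogether. The idea is to run a Picard iteration of $\mathcal{T}$ started at the constant function $x^0$, and show that the coefficientwise bounds are preserved by each iterate. This works because the iterates satisfy integral forms of the same componentwise estimates, and the conditions defining $g_i^\pm$ and $h_i^\pm$ are closed under that integral representation. Each coordinate of the iterate then lies in $x_i^0+Z_i$, and the coefficientwise closed conditions pass to the limit, since convergence in $Y$ implies convergence of each coefficient. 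If this self-map argument fails, the fallback is the original coordinatewise exit argument: since \eqref{eq:EnclosureCondition1} is a strict inclusion, it yields a contradiction for the particular coordinate that exits first at $t^*$. Such a first-exiting coordinate would be obtained by a continuity argument along a sequence of times.

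\textbf{Step 3: read off the three conclusions.} With $(A(s)x(s))_i\in[f_i^-,f_i^+]$ established on the whole interval, conclusion (3) follows from the variation of constants formula. Conclusion (2) follows from (3) by taking minima and maxima over $s$. Conclusion (1) follows from the mean value theorem applied to $x_i$, using $F_i=\lambda_i x_i+(Ax)_i$.
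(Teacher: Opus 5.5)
Your route is the paper's: a Banach fixed point argument for the Duhamel operator $\mathcal{T}$ in the weighted sup-norm, combined with invariance of $S_\tau=\{g:\ g(t_0)=x^0,\ g(t)\in X^0+Z\}$. Making $\mathcal{T}$ a contraction on all of $C([t_0,t_0+\tau];Y)$ and then using that $S_\tau$ is closed is an equivalent packaging, and it even gives uniqueness among all mild solutions.

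The one substantive step is $\mathcal{T}(S_\tau)\subset S_\tau$, which the paper only cites from the nonlinear case, and your Step 2 does not prove it. The $h$-conditions are not ``closed under the integral representation''. Fix $g\in S_\tau$. Then $y=\mathcal{T}(g)_i$ solves $\dot y=\lambda_i y+a(t)$ with $a(t)=(A(t)g(t))_i\in[f_i^-,f_i^+]$ for \emph{all} $t$. Variation of constants gives the $g$-bound at once. The $h$-bound, however, involves $\lambda_i y$ for the output $y$ itself, so it needs a first-exit argument for this single scalar function. That is where the exit argument belongs: the forcing is fixed in advance, the coordinates decouple, and the missing uniform margin causes no trouble. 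Your fallback, run on the fixed point, does not close, because a first-exiting coordinate need not exist.

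There is a more serious problem inside that scalar argument. The mean value theorem gives $\dot y(\xi)\in\lambda_i(X^0_i+Z_i)+[f_i^-,f_i^+]$. For $\lambda_i<0$ the upper end of this interval is $\lambda_i(x_i^-+z_i^-)+f_i^+$, not $\lambda_i(x_i^++z_i^+)+f_i^+$. So your claim that it ``yields $[h_i^-,h_i^+]$'' holds only for $\lambda_i>0$. For negative eigenvalues, which is the generic case here, the hypothesis as printed is too weak.

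Here is a counterexample. Take $\lambda_1=-10^{-3}$, $\lambda_2=-1$, $\lambda_3=-10^{3}$, and $\lambda_i=-i^2$ for $i\ge 4$. Let $X^0=\{e_1\}$, $Z_1=Z_3=[-0.01,0.01]$, $Z_i=[-1,1]$ otherwise, $Ax=2x_1e_2+x_2e_3$, and $\tau=0.9$.
\begin{itemize}
\item All hypotheses hold; for instance $h_2^+=0.918$ and $h_3^\pm=0$.
\item Nevertheless $x_2$ leaves $[-1,1]$ near $t-t_0=\ln 2$.
\item At the end of the step, $x_3(t_0+0.9)\approx 1.19\cdot 10^{-3}$, which violates conclusions (2) and (3); these force $|x_3|\le 10^{-3}$.
\end{itemize}
The argument, yours and the paper's, goes through once $[h_i^-,h_i^+]$ is replaced by the hull of $[0,\tau]\bigl(\lambda_i(X^0_i+Z_i)+[f_i^-,f_i^+]\bigr)$. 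That is the vector $V^{\mathrm{ND}}_i$ from step (2) of the enclosure algorithm, which is what the computations actually check.
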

\begin{proof}
We define the operator $\mathcal{T}:C([t_0,t_0+\tau];Y)\to C([t_0,t_0+\tau];Y) $ in the following way
\begin{equation}
    \mathcal{T}(g)(t) = e^{L(t-t_0)}x^0 + \int_{t_0}^t e^{L(t-s)}A(s)g(s)ds.
\end{equation}
We consider the set
\begin{equation*}
    S_\tau = \{g\in C([t_0,t_0+\tau];Y):g(t_0)=x^0\; \text{and for every $t\in[t_0,t_0+\tau]$ we have } g(t)\in X^0+ Z\}.
\end{equation*}
We will show that the map $\mathcal{T}$ satisfies assumptions of the Banach fixed point theorem.
The proof of the fact that $\mathcal{T}(S_\tau)\subset S_\tau$ is the same as in the corresponding argument  in \cite[Lemma 2.7]{BrusselatorPrzyjetaPraca}.
 To show that $\mathcal{T}$ is a contraction  we equip the space  $C([t_0,t_0+\tau];Y)$ with the  norm $\|y\|_{\alpha} = \sup_{t\in[t_0,t_0+\tau]} \norm{y(t)}_Y e^{-\alpha(t-t_0)},$ where $\alpha>0$ is an appropriately chosen positive constant. For $g_1,g_2\in S,$ with use of the conditions \hyperref[as:BL2]{(BL2)} and \hyperref[as:Bf]{(Bf)}, we compute
\begin{align*}
     \left\|\int_{t_0}^t e^{L(t-s) } A(s)(g_2(s) -g_1(s))ds\right\|_Y
     &\leq M_2 e^{T \kappa_2} \int_{t_0}^t \frac{1}{(t-s)^{\gamma}}
     e^{s\alpha } e^{-s\alpha} \norm {A(s)(g_2(s) -g_1(s))}_{Y^1}  ds\\
      &\leq
      M_2e^{\tau \kappa_2}
      \sup_{s\in[t_0,t_0+\tau]} \norm{A(s)}_{\mathcal{L}(Y;Y_1)}
  \norm { g_1-g_2}_{\alpha}
      \int_{t_0}^t\frac{1}{(t-s)^\gamma}e^{\alpha s} ds
      .
\end{align*}

From Proposition \ref{eq:integralfracExtEstimation}  we can find $\alpha>0$ such that every $\delta >0$ we have
\begin{equation*}
    \int_{t_0}^t\frac{1}{(t-s)^\gamma}e^{\alpha(t-s)} ds
    \leq 
    \frac{e^{\alpha(t_0-t)}}{2M_2e^{\tau\kappa_2}\sup_{s\in[t_0,t_0+\tau]} \norm{A(s)}_{\mathcal{L}(Y;Y_1)}},
\end{equation*}
then for every $t\in[t_0,t_0+ \tau]$ we have
\begin{equation*}
    \left\|\int_{t_0}^t e^{L(t-s) }A(s)
    \left(g_2(s) -g_1(s)\right)ds\right\|_Y \leq \frac{1}{2}e^{\alpha(t-t_0)}\norm{g_1-g_2}_{\alpha}.
\end{equation*}
Hence, $\mathcal{T}$ is a contraction on the set $S_\tau$ equipped with the norm $\norm{\cdot}_\alpha$. The Banach fixed point theorem gives the unique fixed point of the map $\mathcal{T}$, and the proof is complete.
\end{proof}
    Now we recall the algorithm of finding the enclosure, as we are using it in $C^0$ and $C^1$ computations.
   In Lemma \ref{lemma:enclosure} the sets $X^0,Z$ are
 infinite interval vectors, whose representations are bounded.
 If $X^0$ is not an infinite interval vector, we can still find an enclosure, provided we can choose the interval vector $X^1$ such that $X^0 \subset X^1$ and $X^1$ is bounded.
 The following algorithm takes as an input the infinite interval vector $X^0,$ the infinite interval vector $Z$ such that  $0\in Z_i$ for every $i\in\mathbb{N},$ initial time $t_0\in\mathbb{R}$ and the step $\tau>0.$ The algorithm  validates if $X^0 + Z$ is an enclosure for the process given by the equation \eqref{eq:AbstractProblem} for $\tau>0$ by checking assumption \eqref{eq:EnclosureCondition} of Lemma  \ref{lemma:enclosure}.

 \begin{enumerate}
     \item Find the infinite vectors $V^f$ and $V^L$
     such that $f([t_0,t_0+\tau],X^0+Z) \subset V^f $ and
     $L(X^0+Z)\subset V^L.$
     \item Construct the infinite vector $V^{\text{ND}}$ such that
     $[0,\tau](V^L+V^f)\subset V^{\text{ND}} $
    \item Compute the infinite vectors $V^{E_1}$ and $V^{L^{-1}}$ which satisfy
    \begin{equation*}
        e^{[0,\tau]\lambda_i} - 1 \subset (V^{E_1})_i,
        \qquad
        \frac{1}{\lambda_i} \in (V^{L^{-1}})_i \ \ \textrm{for every}\ \ i\in \mathbb{N}.
    \end{equation*}
     \item Compute the infinite vectors $V^{\text{D1}}$ and $V^{\text{D2}}$ which satisfy
     \begin{equation*}
         V^{E_1}*((V^f)^+*V^{L^{-1}} + X^+  ) \subset V^{\text{D1}},
         \quad
         V^{E_1}*((V^f)^-*V^{L^{-1}} + X^- ) \subset V^{\text{D2}}.
     \end{equation*}
     \item Find infinite interval vectors  $V^{\text{D}}$ such that
     \begin{equation*}
        \text{conv}\{V^{\text{D1}}, V^{\text{D2}} \} \subset V^{\text{D}}.
     \end{equation*}
     \item Compute $Z^1$ such that  $V^\text{ND}\cap V^\text{D}\subset Z^1.$
     \item Check if $Z^1 \subset_{\text{int}} Z$ holds.

 \end{enumerate}
 For every $i\in\mathbb{N}$ the interval $[h_i^-,h_i^+]$ from  \eqref{eq:hminushplus} is contained in the interval $V^{ND}_i,$ where $V^{ND}$ is the infinite interval vector obtained in step (2).
 Similarly, for every $i\in\mathbb{N}$ the interval $[g_i^-,g_i^+]$ from \eqref{eq:gminusgplus}  is contained in the interval $V^{D}_i,$ where $V^{D}$ is the
 infinite interval vector from step (5). If the condition in step (7) is satisfied, we have $V^{ND}_i\cap V^{D}_i \subset \text{int} Z_i$ for every $i\in\mathbb{N},$ which implies that the assumptions of Lemma \ref{lemma:enclosure} are satisfied and the set $X^0 + Z$ is an enclosure for our initial data $X^0$ and the time step $\tau>0$ .

 The same algorithm also works for  the non-autonomous linear equation \eqref{eq:AbstractVariationalProblem}, \eqref{eq:linearNonautonmous} even when the sets $X^0$ and $Z$ are unbounded.
 In the case when representations of $X^0$ and $Z$ are bounded in $Y$ we can apply the Lemma \ref{lemma:enclosure}, to justify the correctness of the algorithm. For the non-autonomous linear equation, \eqref{eq:linearNonautonmous} considered in Lemma \ref{lemma:enclosureLinearBounded}, the representations of $X^0$ and $Z$ do not have to be bounded, so the algorithm can work also in the unbounded case. For the variational equation \eqref{eq:AbstractVariationalProblem} we can integrate the initial set in the form $X^0 = X^{0,x}\times X^{0,h}$ and $Z = Z^x\times Z^{h},$ the sets $X^{0,x},Z^x$ have to be bounded but sets $X^{0,h}$ and $Z^{h}$ can be unbounded in $Y.$ To justify the correctness of the algorithm in this case we firstly apply Lemma \ref{lemma:enclosure} to show that $X^{0,x}+ Z^x$ is the enclosure for the $x$ variable of the equation \eqref{eq:AbstractVariationalProblem}. Then by treating the variational part of the equation as the non-autonomous linear equation which satisfies the   conditions of Lemma \ref{lemma:enclosureLinearBounded} we observe that $X^{0,h}+ Z^h$ is the enclosure of  the $h$ variable of the equation \eqref{eq:AbstractVariationalProblem}.

 If condition $(7)$ does not hold, we can modify the set $Z$ and repeat the validation procedure. The reasonable guess is to take $Z = [0,c]Z^1,$ where $c>1.$ Another possibility is to decrease the time step. The following Lemma \ref{lem:whyItWorks} shows that for a certain class of sets, it is always possible to find the enclosure using the above algorithm with a sufficiently small time step $\tau>0$. The proof can be found in Lemma 2.7 in \cite{BrusselatorPrzyjetaPraca}.

 \begin{lemma}
 \label{lem:whyItWorks}
Let $\{a_i\}_{i=1}^\infty$ be a sequence of positive numbers. Assume that
\begin{itemize}
    \item Conditions \hyperref[as:BH]{(BH)}, \hyperref[as:BD]{(BD)} hold.
    \item For some $i_0\in\mathbb{N}$ we have $\lambda_i < 0$ for every $i\geq i_0.$
    \item For every set $A\subset Y$ such that
    \begin{equation*}
        \sup_{x\in A} |x_i| = O(a_i),
    \end{equation*}
    the following holds
    \begin{equation*}
        \sup_{t\in\mathbb{R}}\sup_{x\in A} \left|\frac{f_i(t,x)}{\lambda_i}\right| = o(a_i).
    \end{equation*}
\end{itemize}
We consider the sequences of the intervals
$\{X^0_i\}_{i=1}^\infty$ $\{Z_i\}_{i=1}^\infty$ given by $X^0_i = [x_i^-,x_i^+],$ $Z_i = [z_i^-,z_i^+],$ with $-\infty<x_i^-\leq x_i^+<\infty,$ and $-\infty<z_i^-<0< z_i^+<\infty.$  We define the sets $X^0:= \set{x\in Y: x_i\in X_i^0}$ and $Z:= \set{z\in Y: z_i\in Z_i^0}$. Additionally, we assume that
\begin{itemize}
    \item The intervals $X_i$ contain zero for
    every $i\geq i_0,$ where $i_0\in\mathbb{N}$
    and
    \begin{equation*}
         \quad \sup_{x_i\in X_i^0}|x_i| = O(a_i).
    \end{equation*}
    \item The sequences $z_i^+$ and $z_i^-$ satisfy
    \begin{equation*}
        z_i^+ = \Theta(a_i),
        \quad \text{and}\quad
        |z_i^-| = \Theta(a_i).
    \end{equation*}
\end{itemize}
Under these assumptions for every $t_0\in\mathbb{R}$ there exists $\tau>0$ such that condition \eqref{eq:EnclosureCondition} holds, that is,
\begin{equation}
    [g_i^-,g_i^+]\cap
    [h_i^-,h_i^+]\subset \text{\rm int } Z_i,
\end{equation}
 where $g^-_i,g^+_i,h^-_i,h^+_i$ are defined as in Lemma \ref{lemma:enclosure}.
\end{lemma}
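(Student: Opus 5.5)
The plan is to fix the data once and for all and prove the inclusion componentwise. For each index $i$ I will show $g_i^+<z_i^+$ and $g_i^->z_i^-$ for large $i$, and $h_i^\pm$ strictly inside $Z_i$ for small $i$. This suffices: since $g_i^-\le 0\le g_i^+$ (take $s=0$) and likewise $0\in[h_i^-,h_i^+]$, the intersection $[g_i^-,g_i^+]\cap[h_i^-,h_i^+]$ lies in the interior of $Z_i$ as soon as either of the two intervals does.

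First I bound the nonlinearity. The set $X^0+Z$ satisfies $\sup|x_i|\le |x_i^\pm|+|z_i^\pm| = O(a_i)$ by the two bullet assumptions. The third assumption then gives $\sup_{t}\sup_{x\in X^0+Z}|f_i(t,x)/\lambda_i| = o(a_i)$. Hence we may choose $f_i^\pm$ with $|f_i^\pm/\lambda_i|\le \varepsilon_i a_i$, where $\varepsilon_i\to 0$. Here I use that $a_i>0$ and the $\Theta$ bounds give constants $c,C>0$ with $c a_i\le |z_i^\pm|\le C a_i$, and $|x_i^\pm|\le K a_i$.

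\textbf{Tail indices $i\ge i_1$.} For $i\ge i_0$ we have $\lambda_i<0$, so $e^{\lambda_i s}-1\in[-1,0]$. For the upper bound, $g_i^+=\max_s (e^{\lambda_i s}-1)(f_i^+/\lambda_i+x_i^+)$. Since $0\in X_i$, $x_i^+\ge0$, so the factor $(e^{\lambda_i s}-1)$ is nonpositive and multiplies $x_i^+\ge 0$. The only possibly positive contribution therefore comes from $-f_i^+/\lambda_i$, bounded by $\varepsilon_i a_i$. Thus $g_i^+\le \varepsilon_i a_i$, and symmetrically $g_i^-\ge -\varepsilon_i a_i$ using $x_i^-\le 0$. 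Choosing $i_1\ge i_0$ with $\varepsilon_i<c$ for $i\ge i_1$ gives $[g_i^-,g_i^+]\subset(-c a_i,c a_i)\subset \operatorname{int}Z_i$. This holds for every $\tau>0$, which is exactly the point: the dissipative formula $g$ controls the infinitely many modes uniformly in $\tau$.

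\textbf{Finitely many indices $i<i_1$.} Here $h_i^\pm$ is used. We have $|h_i^\pm|\le \tau\,(|\lambda_i|(|x_i^\pm|+|z_i^\pm|)+|f_i^\pm|)$, which is a finite quantity for each fixed $i$; it is finite because the sup bound on $f_i/\lambda_i$ gives a finite bound on $f_i$. Since $z_i^-<0<z_i^+$ strictly, choosing $\tau$ small enough makes $[h_i^-,h_i^+]\subset\operatorname{int}Z_i$ for all $i<i_1$ simultaneously, as there are finitely many constraints. One caveat is that $f_i^\pm$ do not depend on $\tau$ because the bound in the third assumption is uniform in $t$. This keeps the argument non-circular.

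The main obstacle is the sign bookkeeping in the tail. It relies on $0\in X_i$ for $i\ge i_0$, so that the $x_i^\pm$ terms help rather than hurt. It also relies on the $o(a_i)$ bound being uniform over the enlarged set $X^0+Z$ and not just over $X^0$. Both are supplied by the hypotheses, so I expect the rest to be routine.
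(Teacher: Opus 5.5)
Your proof is correct and takes essentially the same route as the argument the paper defers to (Lemma 2.7 of \cite{BrusselatorPrzyjetaPraca}). The dissipative bounds $g_i^\pm$ handle all modes $i\ge i_1$ uniformly in $\tau$, using $\lambda_i<0$, $0\in X_i^0$ and $f_i/\lambda_i=o(a_i)$ against $z_i^\pm=\Theta(a_i)$, while the finitely many remaining modes are handled by $h_i^\pm=O(\tau)$ with $\tau$ small, where taking $f_i^\pm$ from the $t$-uniform supremum (hence independent of $\tau$) keeps that choice non-circular.
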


\Doktorat{
Using the above lemma we deduce that for the Brusselator system we can always find an enclousure for any set described in the Section \ref{sec:descritionSet}, by choosing sufficiently short time-step.

  \begin{remark}
 For every $C>0,\;\varepsilon>0,\;s>1$ there exists $\tau>0$ such that for every initial data $(u^0,v^0)\in Y$ satisfying
 \begin{equation*}
     |u^0_k|\leq\frac{C}{|k|^s}
     \quad\text{and}\quad
     |v^0_k|\leq\frac{C}{|k|^s},
 \end{equation*}
 the solution $(u(t),v(t))$ to \eqref{eq:BrusselatorPDE} satisfies
 \begin{equation*}
     |u_k(t)|\leq\frac{C+\varepsilon}{|k|^s}
     \quad\text{and} \quad
     |v_k(t)|\leq\frac{C+\varepsilon}{|k|^s}
     \quad\text{for}\;t\in[0,\tau].
 \end{equation*}

 \end{remark}
 One can easily construct an example of a problem without this property. To this end, let us consider the logistic model of population growth with diffusion and homogeneous Dirichlet boundary conditions. The corresponding equation together with the initial and boundary conditions are the following
\begin{equation}\label{eq:logisticEquation}
\begin{cases}
 u_t = d  u_{xx} + u- u^2\   \text{for}\; (x,t)\in [0,\pi]\times(0,\infty),\\
u(t,x) = 0\; \text{for}\; (x,t)\in \{0,\pi\}\times (0,\infty),
\\u(0,x) = u^0(x),
\end{cases}
\end{equation}
 for $d>0.$ As we impose the Dirichlet boundary conditions, from the equation we observe that $u_{xx}$ is always $0$ at the boundary. But, in general, the fourth derivative $u_{xxxx}$ can have nonzero values at the boundary.  This implies that, the coefficients in the expansion of the solution in the sine trigonometric series  cannot have the arbitrarily fast polynomial decay.

 We write the equation  in above example in the form \eqref{eq:AbstractProblem} with $L = d u_{xx}+u$ and $f(u)=u^2.$ For the initial condition $u_0 = \sqrt{\frac{\pi}{8}}\sin(x)$ we have that
 \begin{equation*}
      f(u_0) = \frac{\pi}{8}(1-\cos(2x)) = \sum_{i=1}^\infty \frac{(-1+(-1)^{i} )}{-4n+n^3}\sin(ix).
 \end{equation*}
So using Lemma \ref{lemma:enclosure} we have a chance to find the time step $\tau>0$ and $C>0$ such that the solution satisfies
\begin{equation*}
    |u_k(t)|\leq \frac{C}{k^s}\quad \text{for}\quad t\in[0,\tau],
\end{equation*}
 only for $s\in(1,5).$ 

Note that for Brusselator problem, the constant term $A$ from the ODE 
\begin{equation}\label{eq:BrusselatorODE2}
	\begin{cases}
		\frac{du}{dt} = - (B+1)u +u^2v + A  \ \   \text{for}\ \  t\in \R,\\
		\frac{dv}{dt} = Bu - u^2v  \ \   \text{for}\ \  t\in \R.
	\end{cases}
\end{equation}
has been replaced by the term $A\sin(x)$ in our PDE extension of the Brusselator system. In principle, we could consider the PDE version of the problem with the constant term $A$, that is the following system 
\begin{equation}\label{eq:BrusselatorPDEILL}
\begin{cases}
 u_t = d_1  u_{xx}- (B+1)u +u^2v + A  \;  \text{for}\; (x,t)\in (0,\pi)\times(0,\infty),\\
 v_t = d_2 v_{xx} + Bu - u^2v  \;  \text{for}\; (x,t)\in (0,\pi)\times(0,\infty),\\
u(t,x)= v(t,x) = 0\; \text{for}\; (x,t)\in \{0,\pi\} \times (0,\infty),
\\u(0,x) = u^0(x),\ v(0,x) = v^0(x)\ \textrm{for}\ x\in(0,\pi).
\end{cases}
\end{equation}
In such a case, we observe that $u_{xx}(t,x) = -\frac{A}{d_1}\not = 0$ for the boundary points  $x\in \{0,\pi\}.$ and for every $t\in\mathbb{R}^+,$ which means that compatibly conditions are not met. 
Let us represent $u$ in term of the sine Fourier series \eqref{eq:decomposion}. Assume that the series $\sum_{i=1}^\infty |u_i(t)|i^2$ is convergent.  Then, we can differentiate the Fourier series  twice, and we obtain 
\begin{equation*}
    u_{xx}(t,x) = \sum_{i=1}^\infty -u_i(t)i^2\sin(i x)
\end{equation*}
Therefore $u_{xx}(t,x) = 0$ for $x\in\{0,\pi\},$ which is not true, and hence, by contradiction, the series  $\sum_{i=1}^\infty |u_i(t)|i^2$ has to diverge.   Therefore, the Fourier series of the solution cannot converge fast. This unwelcome effect does not occur when we consider term $A\sin(x)$ instead of $A$ in the system \eqref{eq:BrusselatorPDE}, because the function $Asin(x)$ on $[0,\pi]$ is a restriction of a smooth, odd and $2\pi$ periodic function.
}

\Doktorat{
 We deduce}\article{From above lemma, we deduce} that we can always find the enclosure for the variational equation \eqref{eq:Chafee–InfanteVariational}, for the considered systems. Note that we can find it also for the sets in which the variational part is unbounded. We present it use for Chafee--Infante equation in the following remark. 
\begin{remark}
    Let $C>0,\varepsilon>0, t_0\in\mathbb{R}$. Moreover, let $s_1 >1$ and  $s_2\in\mathbb{R}$ be such that one of the following conditions hold
    \begin{itemize}
        \item $s_1+s_2 > 1$ if $s_2\leq 0$,
        \item $s_1-s_2>1$ if $s_2\in (0,1]$,
        \item $s_2>1$. 
    \end{itemize}  
    There exist $\tau>0$ such that for every initial data $(u^0,h^0)\in C_0(0,\pi)\times C_0(0,\pi)$ satisfying
    \begin{equation*}
        |u_k^0|\leq\frac{C}{k^{s_1}},\quad |h_k^0|\leq\frac{C}{k^{s_2}}, 
    \end{equation*}
    the solution $(u(t),h(t))$ to \eqref{eq:Chafee–InfanteVariational} satisfies
    \begin{equation*}
             |u_k(t)|\leq\frac{C+\varepsilon}{|k|^{s_1}}
     \quad\text{and} \quad
     |h_k(t)|\leq\frac{C+\varepsilon}{|k|^{\min\{s_1,s_2\}}}
     \quad\text{for}\;t\in[t_0,t_0+\tau].
    \end{equation*}
    
\end{remark} 
 \subsection{Evolution of sets}\label{sec:Evolving set}
 We assume that $X([t_0,t_0+\tau]) = X_P([t_0,t_0+\tau])+ X_Q([t_0,t_0+\tau])$ is an enclosure for the initial data $X = X_P+ X_Q$ and the time step $\tau>0.$  The following procedure is used to find the set $X(\tau)$ such that $\varphi(\tau+t_0,t_0,X )X\subset X(\tau).$

\begin{enumerate}
    \item Compute an infinite interval vector $V$ such that
    \begin{equation*}
        f^2([t_0,t_0+\tau],X_P([t_0,t_0+\tau]),X_Q([t_0,t_0+\tau]))\subset V,
    \end{equation*}
    where $f^2$ comes from the splitting of the nonlinearity \eqref{eq:fdecomposition}.
    \item Solve the system of ordinary differential inclusions
    \begin{equation}\label{inclusion}
        \frac{d}{dt}Px \in L Px+Pf(t,Px) + PV,
    \end{equation}
        \item Compute the infinite vectors $V^{E_1}, V^{E_2}$ and $V^{L^{-1}}$ which satisfy
    \begin{equation*}
        e^{\tau\lambda_i} -1 \in (V^{E_1})_i,
        \qquad
        e^{\tau\lambda_i}  \in (V^{E_2})_i,
        \qquad
        \frac{1}{\lambda_i} \in (V^{L^{-1}})_i \ \ \textrm{for every}\ \ i\in \mathbb{N}.
    \end{equation*}

    \item Compute $V^3$ such that
        $V^{E_2} * X + V^{E_1} * V^{L^{-1}} * V^2 \subset V_3$.

    \item Return the set $(PV_3 \cap X_{P1}) + QV_3$.
\end{enumerate}

The detailed description how to rigorously solve the differential inclusion \eqref{inclusion} can be found in \cite{ControlKapelaZgliczynski}.
For the step (2) we can consider differential inclusion only on part of variables represented explicitly.
This can be beneficial for the computational time.\Doktorat{
 As, for the Brusselator system, we use the infinite interval vectors of type \eqref{eq:representationInfiniteVectors}, we need to determine the decay rate $s$ for $V^{E_2}.$ It is possible to impose arbitrarily fast polynomial decay in this term. On the other hand, the vector $V^{E_1}$ can be represented by \eqref{eq:representationInfiniteVectors} with $s=0$. Finally, the representation of $V^{L^{-1}}$ decays with $\overline{s} = 2$ determined by the decay of the inverses of the eigenvalues of $L$ (which have to decay to zero as the considered problem is disspative). So, $V_3$ can have higher $s$ then the initial data $X$. Maximal increase of the rate $s$ in $V_3$ is equal to $\overline{s}$. Theoretically in every time step it is possible to increase $s$ by the value of $\overline{s}.$ But it can lead to overestimates on some variables so it is sometimes beneficial to keep the old $s.$ In the code we are using heuristic algorithm which estimates upper bound of resulting series to decide if it worth to increase  the exponent. In  Section \ref{sec:ComputerAssistedProofBrusselator} we observe this effect during integration of Brusselator system. The same algorithm of evolution of sets works in the case of variational equation.} For the variational equation we work with the infinite interval vectors $(U,U^h)$ with polynomial bound \eqref{eq:representationInfiniteVectorsChafeInfanteVariational}. Then constants $s_1$ and $s_2$ may also increase during the integration, as the $V^{L^{-1}}$ decays by $\overline{s}=2$  for the this problem. From this effect, the evolution of sets with the unbounded variational part may, during integration, become a bounded set. In Section \ref{sec:resultChafe-Infante} we present the example that the unbounded set becomes bounded upon evolution.

\section{Results for non-autonomous Chafee--Infante and fractional Burgers equations}\label{sec:Chafee-Infante results}
In this section we present computer assisted proofs of our main results for the Chafee--Infante and Burgers problems, that is the existence of locally attracting periodic orbits. 
We  use the $C^1$ algorithm to obtain an estimate on the solution to the variational equation along the periodic orbit.
\subsection{Abstract results}
In this section, we work with problem \eqref{eq:AbstractProblem} and we assume that conditions \hyperref[as:BL1]{(BL1)}, \hyperref[as:BL2]{(BL2)}, \hyperref[as:Bf]{(Bf)}, \hyperref[as:BH]{(BH)}, \hyperref[as:BD]{(BD)}, \hyperref[as:BV1]{(BV1)}, \hyperref[as:BV2]{(BV2)}, \hyperref[as:BV3]{(BV3)} are satisfied. We also assume that $f$ is periodic in time, that is $f(t,x) = f(t+\overline{t},x)$ for some $\overline{t}>0$ and every $(t,x)\in  \mathbb{R}\times Y.$ We denote by $\varphi$ the local process induced by equation \eqref{eq:AbstractProblem}. Moreover, we denote by $V(t,t_0,x^0)$ the Fr\'{e}chet derivative of the map $\varphi(t,t_0,\cdot):Y\to Y$ at $x^0.$\Doktorat{  The following theorem is analogous to Theorem \ref{th:PoincareFixedPoint}. The difference is that now we use time shift map instead of Poincar\'{e} map.} Observe that in the following theorem we do not need  any estimate on the solution of the variational equation, so in the application of this theorem it is enough to use a $C^0$ integration algorithm.
\begin{lemma}\label{lem:AbstractNonAutonomousOrbit}
Assume that $X^0\subset Y$ is a compact and convex set.
If $\varphi(\overline{t},0,X^0)\subset X^0$ then there exists a global periodic solution $x:\mathbb{R}\to Y$ with period $\overline{t}$. This means that $x(\overline{t}+t)= x(t)$ for every $t\in \mathbb{R}$, and $\varphi(t,t_0,x(t_0)) = x(t)$ for every $t_0\in\mathbb{R}$ and $t\geq t_0.$
\end{lemma}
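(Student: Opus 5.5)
The plan is to obtain a fixed point of the time-shift map $P:=\varphi(\overline{t},0,\cdot)$ on $X^0$ by the Schauder fixed point theorem, and then extend it to a periodic global solution using the time-periodicity of $f$.

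\textbf{Step 1: continuity of $P$ on $X^0$.} By hypothesis $\varphi(\overline{t},0,x^0)$ is defined for every $x^0\in X^0$, so $t_{max}(0,x^0)>\overline{t}$. Fix $x^0\in X^0$. The trajectory $\tau\mapsto\varphi(\tau,0,x^0)$ is continuous on $[0,\overline{t}]$, hence bounded by some $R$. Lemma \ref{rem:LongTimeExistence}, applied with $T=\overline{t}$, then gives
\begin{equation*}
\norm{\varphi(\overline{t},0,y^0)-\varphi(\overline{t},0,x^0)}_Y\leq C_1(R,\overline{t})\norm{y^0-x^0}_Y
\end{equation*}
for all $y^0$ with $\norm{y^0-x^0}_Y<\lambda_1(R,\overline{t})$. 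So $P$ is continuous (indeed locally Lipschitz) at each point of $X^0$. Since $X^0$ is compact, convex and nonempty (the nonempty case is implicit), and $P(X^0)\subset X^0$, Schauder's theorem gives $x^*\in X^0$ with $\varphi(\overline{t},0,x^*)=x^*$.

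\textbf{Step 2: defining the global solution.} Set $x(t)=\varphi(t,0,x^*)$ for $t\in[0,\overline{t}]$. The key observation is that periodicity of $f$ in $t$ makes $\varphi(t+\overline{t},s+\overline{t},y)=\varphi(t,s,y)$. To see this, note that both sides satisfy the same Duhamel equation after the change of variables $s\mapsto s+\overline{t}$ in the integral, so they agree by the uniqueness in Lemma \ref{lem:AbstractProblemLocalExistence} (and maximality in Lemma \ref{re:procesCon}). Using this and the process (cocycle) property $\varphi(t,s,\varphi(s,r,y))=\varphi(t,r,y)$, I show by induction that $\varphi(t,0,x^*)$ exists for all $t\ge0$ and satisfies $\varphi(t+\overline{t},0,x^*)=\varphi(t,0,x^*)$. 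Concretely,
\begin{equation*}
\varphi(t+\overline{t},0,x^*)=\varphi(t+\overline{t},\overline{t},x^*)=\varphi(t,0,x^*),
\end{equation*}
which extends existence interval by interval. The function is then extended to all of $\mathbb{R}$ by periodicity, $x(t):=x(t-k\overline{t})$ for a suitable integer $k$.

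\textbf{Step 3: the solution property.} I verify $\varphi(t,t_0,x(t_0))=x(t)$ for arbitrary $t_0\in\mathbb{R}$ and $t\ge t_0$. Choose $k$ with $t_0+k\overline{t}\ge0$. By the shift invariance,
\begin{equation*}
\varphi(t,t_0,x(t_0))=\varphi(t+k\overline{t},\,t_0+k\overline{t},\,x(t_0+k\overline{t})).
\end{equation*}
By the cocycle property this equals $\varphi(t+k\overline{t},0,x^*)=x(t+k\overline{t})=x(t)$.

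The main obstacle is the rigor of the shift invariance and cocycle identities for the local process, including the equality of the maximal existence times. Both follow from uniqueness of mild solutions: the Duhamel formula splits at any intermediate time thanks to the semigroup property $e^{L(t-s)}e^{L(s-r)}=e^{L(t-r)}$. The continuity in Step 1 depends on the local-uniformity constants of Lemma \ref{rem:LongTimeExistence} and is otherwise routine.
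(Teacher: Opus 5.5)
Your proposal is correct and follows the paper's argument: Schauder's theorem applied to $\varphi(\overline{t},0,\cdot)$ on $X^0$, followed by the shift invariance $\varphi(t+\overline{t},s+\overline{t},y)=\varphi(t,s,y)$ and the cocycle property to build the periodic solution. The only difference is cosmetic: the paper defines $x(t)=\varphi(t,-n\overline{t},\overline{x})$ and checks that this is independent of $n$, while you build the solution forward from time $0$ and extend it by periodicity; you also spell out the continuity (via Lemma \ref{rem:LongTimeExistence}) and the shift-invariance justifications that the paper leaves implicit.
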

\begin{proof}
By the continuity of map $\varphi(\overline{t},0,\cdot)$ and the Schauder fixed point theorem, we observe that there exists $\overline{x}\in X^0$ such that $\varphi(\overline{t},0,\overline{x}) = \overline{x}.$
We define $x(t) = \varphi(t ,-n\overline{t},\overline{x})$ for $n\in\mathbb{N}$ and $t\geq - n\overline{t}.$ Then $x(t)$ is well defined and it does not depend on $n.$ 
Indeed for $n_1,n_2\in\mathbb{N},$ such that $n_1\geq n_2$ we have that 
\begin{align*}
    \varphi(t ,-n_1\overline{t},\overline{x}) &= 
    \varphi( t ,-n_2\overline{t},\varphi(-n_2\overline{t} ,-n_1\overline{t},\overline{x})) 
    \\&=
    \varphi( t ,-n_2\overline{t},\varphi^{n_1-n_2}(\overline{t},0,\overline{x} )) = \varphi( t ,-n_2\overline{t},\overline{x}), 
\end{align*}
which shows that $x(t)$ does not depend on $n$. Now we compute 
\begin{equation*}
    \varphi(t,t_0,x(t_0))=  \varphi(t,t_0,
    \varphi(t_0,-n\overline{t},\overline{x}) =\varphi(t,-n\overline{t},\overline{x}) = x(t).
\end{equation*}
We also have
\begin{equation*}
    x(\overline{t}+t)=  \varphi(\overline{t}+t, -(n-1)\overline{t},\overline{x}))  = \varphi(t, -n\overline{t},\overline{x}) = x(t),
\end{equation*}
which ends the proof.
\end{proof}
The next lemma allows us to show that the periodic orbit that we have found is attracting. To validate the assumption of this lemma we need to have estimates on the solution of variational equation, which we obtain from the $C^1$ integration algorithm. 
\begin{lemma}\label{lem:AbstractNonAutonomousAttracting}
    Assume that $x(t):\mathbb{R}\to Y$ is a periodic solution to \eqref{eq:AbstractProblem} with period $\overline{t}.$ Moreover, let
    \begin{equation*}
        \norm{V(\overline{t},0,x(0))}_{\mathcal{L}(Y) } = a < 1.
    \end{equation*}
    For every $\varepsilon>0$ there exist $\delta>0$ and $D>0$ such that for every $t_0\in\mathbb{R}$, $t\geq t_0$ and $h\in B_Y(x(t_0),\delta)$ the function $\varphi(t,t_0,x(t_0)+h)$ is well defined and the following estimate holds
    \begin{equation*}
        \norm{\varphi(t,t_0,x(t_0) + h) - x(t)}_Y\leq De^{\ln(a+\varepsilon)\frac{t-t_0}{\overline{t}}}\norm{h}_Y.
    \end{equation*}
    
    \begin{proof}
    Define $R := \sup_{t\in\mathbb{R}}\norm{x(t)}_Y.$ From  Lemma \ref{lem:VariationalProperties} 
    for $\norm{h}_Y\leq \min\left\{\frac{\varepsilon}{C_3(R,\overline{t})} ,\lambda_3(R,\overline{t}) \right\}$ with $\varepsilon>0$ such that $\varepsilon+a<1,$  we have
    \begin{equation*}
        \norm{\varphi(\bar{t},0,x+h) -x(\overline{t})}_Y
        \leq a\norm{h}_Y + C_3(R,\overline{t})\norm{h}_Y^2 \leq (a+\varepsilon) \norm{h}_Y.
    \end{equation*}
 Using  Lemma \ref{rem:LongTimeExistence} we observe that
        \begin{align*}
            \norm{\varphi(t,t_0,x(t_0)+h) -x(t)}_Y
            &=
            \norm{
            \varphi(t,n_2\overline{t},
            \varphi(n_2\overline{t} ,n_1\overline{t},
            \varphi(n_1\overline{t} ,t_0,x(t_0)+h )) -x(t)}_Y 
            \\
            &\leq
            \norm{\varphi(t,n_2\overline{t},
            \varphi^{n_1-n_2}(\overline{t} ,0,
            \varphi(n_1\overline{t} ,t_0,x(t_0)+h )) -x(t)}_Y
            \\
            &\leq
            C_1(R,\overline{t})^2(a+\varepsilon)^{n_2-n_1} \norm{h}_Y
            \\
            &\leq
            C_1(R,\overline{t})^2(a+\varepsilon)^{-2} e^{\ln{(a+\varepsilon)}\frac{t-t_0}{\overline{t}}} \norm{h}_Y,
        \end{align*}
        for $\norm{h}_Y\leq \min\left\{\frac{\varepsilon} {C_1(R,\overline{t})C_3(R,\overline{t})},\lambda_1(R,\overline{t}),\frac{\lambda_3(R,\overline{t})}{C_1(R,\overline{t})}\right\} $ with $n_2 = \floor{\frac{t}{\overline{t}}}$ and $n_1 = \ceil{\frac{t_0}{\overline{t}}}.$ The proof is complete. 
    
    \end{proof}
\end{lemma}
\subsection{Results on the stable solutions of Chafee--Infante problem.}\label{sec:resultChafe-Infante}
In this section, we prove the main theorem about the existence of a locally attracting periodic orbit for the Chafee--Infante system. The result is presented in the next theorem. 
\begin{theorem}\label{th:Chaffe-InfanteAtractingOrbit1}
The Chafee-Infante problem for $\lambda = 2,\,b(t)=\frac{1}{2}\sin(2\pi t) +1$ has a periodic orbit $u^*(t):\mathbb{R}\to C_0(0,\pi)$ with period $1$ which is symmetric with respect to the line $x=\frac{\pi}{2},$ that is $u^*(t,\frac{\pi}{2}+x) = u^*(t,\frac{\pi}{2}-x)$ for every $x\in[0,\frac{\pi}{2}]$ and $t\in \R$. Moreover, there exist $\delta>0,$   $D>0$ and $a<0$ such that for all 
initial data $t_0\in\mathbb{R}$ and $u^0\in C_0(0,\pi)$ satisfying $\norm{u^*(t_0)-u^0}_{C_0(0,\pi)}\leq\delta$ there exists a global solution $u:[t_0,\infty)\to C_0(0,\pi)$ with $u(t_0)=u^0$ and the following estimate holds 
\begin{equation*}
    \norm{u^*(t)-u(t)}_{C_0(0,\pi)} \leq De^{a(t-t_0)}  \norm{u^*(t_0)-u(t_0)}_{C_0(0,\pi)}\quad\text{for}\;t\in[t_0,\infty).
\end{equation*}
\end{theorem}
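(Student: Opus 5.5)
The plan is a computer-assisted proof in two stages: existence of the periodic orbit via Lemma \ref{lem:AbstractNonAutonomousOrbit}, then attraction via Lemma \ref{lem:AbstractNonAutonomousAttracting}. Throughout we work in the invariant subspace $V$ of Proposition \ref{prop:OddSubspaceChaffeInfante}, where only odd Fourier modes are nonzero. This gives the symmetry $u^*(t,\frac{\pi}{2}+x)=u^*(t,\frac{\pi}{2}-x)$ automatically once the orbit is found in $V$.

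\textbf{Stage 1: existence.} First compute numerically an approximate fixed point of the period-one time shift map $\varphi(1,0,\cdot)$ for $\lambda=2$, $b(t)=\frac12\sin(2\pi t)+1$, for instance by forward integration of a Galerkin projection. Around it, build a set $X^0=X^0_P+X^0_Q\subset V$ with the following properties: $X^0_P$ is a box in the first few odd modes, and $X^0_Q$ is given by polynomial decay $|u_k|\le C/k^{s}$ with $s>2$. Such a set is compact and convex in $C_0(0,\pi)$, since the sine series then converges absolutely and uniformly, and the tails are uniformly small. Next, run the $C^0$ algorithm of Sections \ref{sec:enclosure} and \ref{sec:Evolving set} over $[0,1]$ in many small steps. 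Each step uses Lemma \ref{lemma:enclosure} for enclosures, and the cubic term is estimated via Lemmas \ref{lem:sinTimesSin} and \ref{lem:cosTimesSin}. The goal is to verify rigorously that $\varphi(1,0,X^0)\subset X^0$. Lemma \ref{lem:AbstractNonAutonomousOrbit} then gives a $1$-periodic global solution $u^*$ with $u^*(0)\in X^0$. The integration also yields $R=\sup_t\norm{u^*(t)}_{C_0}$, so the hypotheses of Lemmas \ref{rem:LongTimeExistence} and \ref{lem:VariationalProperties} hold.

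\textbf{Stage 2: contraction of the derivative.} Here we bound $a=\norm{V(1,0,u^*(0))}_{\mathcal L(C_0)}<1$, which is the step I expect to be the main obstacle. The estimate must hold for every $h^0\in C_0(0,\pi)$, and the natural tail set is unbounded in $C_0$. Following Section \ref{sec:Overview_C1_algorithm}, decompose $h^0=\sum_{k\le n}h^0_k e_k+h^0_Q$.
\begin{itemize}
\item For the finitely many basis directions $e_k$, integrate system \eqref{eq:Chafee–InfanteVariational} with the $C^0$ algorithm, using $u$-initial data $X^0$.
\item For the tail, integrate the unbounded set $X^h=\{h: h_k\in[-1,1],\ k>n\}$, which has decay rate $s_2=0$, together with $u$ of rate $s_1>1$. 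This is justified by Lemma \ref{lemma:enclosureLinearBounded}, and the products $u^2h$ are handled by Lemma \ref{lem:costimesSinDiverging}, which requires $s_1+s_2>1$.
\end{itemize}
The dissipation, acting through $V^{L^{-1}}$ with $\bar s=2$, raises the decay exponent of the $h$-component at each step. After time $1$ the image of the tail set therefore has summable coefficients, and its sup-norm can be bounded by $\sum_k|h_k|$.

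One difficulty remains: $\norm{h^0}_{C_0}\le 1$ does not give $|h^0_k|\le 1$ uniformly in a way that matches the Fourier-based bound. Only $|h^0_k|\le\frac{2}{\pi}\int|h^0|\le 2\norm{h^0}_{C_0}$ holds. This is still enough, since every $h^0$ in the unit ball lies in $\sum_{k\le n}a_ke_k+2X^h$ with $|a_k|\le 2$. Then, by linearity,
\begin{equation*}
\norm{V(1,0,u^*(0))h^0}_{C_0}\le 2\sum_{k\le n}\sup\norm{V e_k}_{C_0}+2\sup_{h\in X^h}\norm{V h}_{C_0}.
\end{equation*}
The right-hand side must be verified to be $<1$. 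If the bound is too crude, I would first compose over several periods, replacing $1$ by $N$, or split the period and chain bounds. Lemma \ref{lem:AbstractNonAutonomousAttracting} applies equally with period $N$.

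\textbf{Conclusion.} With $a<1$, choose $\varepsilon$ with $a+\varepsilon<1$. Lemma \ref{lem:AbstractNonAutonomousAttracting} then yields $\delta$ and $D$ for which solutions exist globally and satisfy the stated estimate with exponent $\ln(a+\varepsilon)<0$, uniformly in $t_0$ by periodicity. This proves the theorem.
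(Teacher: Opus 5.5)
Your proposal is correct and follows the paper's proof essentially step by step. The paper verifies $\varphi(1,0,X^0)\subset X^1\subset X^0$ for an odd-mode set $X^0$ (a $10^{-4}$-box in modes $1,3,5,7$ plus a $[-1,1]/k^4$ tail) and applies Lemma \ref{lem:AbstractNonAutonomousOrbit}; it then integrates the variational equation on the full space for $\sin x,\sin 2x,\sin 3x$ and for the unbounded tail set with $h_k\in[-1,1]$, and combines these via Lemmas \ref{lem:C0operatorNormEstimation} and \ref{lem:C0NormEstimation} (exactly your factor-$2$ decomposition) to obtain $\sup_{x\in X^0}\norm{V(1,0,x)}_{\mathcal{L}(C_0(0,\pi))}\leq 0.421383$, so Lemma \ref{lem:AbstractNonAutonomousAttracting} applies directly with period $1$ and no multi-period composition is needed.
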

We start the proof procedure by finding the approximate fixed point for the map 
\begin{equation*}
    \varphi(1,0,\cdot):C_0(0,\pi)\to C_0(0,\pi),
\end{equation*}
which is used for the construction of set $X^0$ for which we validate the Lemma \ref{lem:AbstractNonAutonomousOrbit}. We have numerically iterated this map and got the point 
\begin{align*}
    u^*(x) = 1.2703\sin(x)+10^{-2}\cdot4.30299\sin(3x)+10^{-3}\cdot1.63332\sin(5x)+10^{-5}\cdot6.25849\sin(7x),
\end{align*}
which is the approximation of the fixed point for this map. We define the set $X^0$ in the following way
\begin{equation*}
    X^0 = X_P^0 + X_Q^0,
\end{equation*} 
with
\begin{equation*}
    X_P^0 = u^*(x)+10^{-4}\sum_{k=1}^4[-1,1]\sin((2k-1)x),
\end{equation*}
\begin{equation*}
    X_Q^0 = \left\{u\in C_0(0,\pi): u_k\in\frac{[-1,1]}{k^4}\ \  \text{for $i$ odd and $k>7$}\;\ \textrm{and}\  u_k = 0 \;\text{otherwise}    \right\}.
\end{equation*}
From the rigorous integration of the Chafee--Infante system we have the following:
\begin{equation*}
    \varphi(1,0,X^0)\subset X^1,
\end{equation*}
where $X^1 = X ^1_P+X^1_Q$ with
\begin{align*}
    X_P^1 &= u^*(x)+ 10^{-5}[-1.64257, 1.85394]\sin(x) + 10^{-6}[-1.90213, 1.93443]\sin(3x) \\&+ 10^{-7}\sin(5x)10^{-7}[-1.893, 2.30257] +10^{-7}\sin(7x)[-5.35159, 9.18297],
\end{align*}
and
\begin{equation*}
    X_Q^1 = \left\{u\in C(0,\pi): u_k\in\frac{[-0.1644, 0.326846]}{k^4}\ \ \text{for $k$ odd and $k>7$ and}
     \; u_k = 0 \;\text{otherwise}    \right\}.
\end{equation*}
We observe that $X^1\subset X^0,$ and $X^0$ is convex and compact in $C_0(0,\pi).$ From Lemma \ref{lem:AbstractNonAutonomousOrbit}, this gives the existence of a periodic orbit for the Chafee--Infante problem. Now we have the following estimations from the rigorous integration of the variational equation. 
\begin{align*}
    V(1,0,X^0)\sin(x)&\subset
    [0.143998, 0.144082]\sin(x) + 10^{-7}[-5.79177, 5.96521]\sin(2x) 
    \\&+ 10^{-2}[1.5544, 1.5554]\sin(3x) + 10^{-8}[-6.33397, 7.02181]\sin(4x) 
    \\&+ 10^{-4}[9.86374, 9.87354]\sin(5x) + 10^{-7}[-1.23369, 1.64851]\sin(6x)  
    \\&+
    10^{-5}[5.22783, 5.42035]\sin(7x) + 10^{-7}[-7.10216, 1.01582]\sin(8x)
    \\&+\sum_{k=9}^\infty \frac{[-0.0559152, 0.11911]}{k^{3.5}}\sin(kx),
\\
    V(1,0,X^0)\sin(2x)&\subset
    10^{-6}[-1.87253, 1.89317]\sin(x) + 10^{-2}[1.67772, 1.67855]\sin(2x) \\&+ 10^{-7}[-2.28335, 2.32344]\sin(3x) + 10^{-3}[1.25572, 1.25643]\sin(4x) \\&+ 10^{-8}[-3.13828, 3.81326]\sin(5x) + 10^{-5}[7.22471, 7.23647]\sin(6x) \\&+ 10^{-7}[-1.15233, 1.58731]\sin(7x) + 10^{-6}[3.63068, 3.78777]\sin(8x),
    \\&+\sum_{k=9}^\infty \frac{ [-0.00199783, 0.00355262]}{k^{3}}\sin(kx),
    \\
     V(1,0,X^0)\sin(3x)&\subset 10^{-2}[2.41132, 2.41403]  \sin(x) + 10^{-7}[-5.41089, 5.72121]  \sin(2x) \\&+ 10^{-3}[2.71165, 2.7148]  \sin(3x) + 10^{-8}[-4.41425, 4.74869]  \sin(4x) \\&+ 10^{-4}[1.7158, 1.7184]  \sin(5x) +10^{-8}[-2.33457, 3.07558]  \sin(6x) \\&+ 10^{-6}[9.09545, 9.43261]  \sin(7x) + 10^{-7}[-1.20473, 1.74117]  \sin(8x)
     \\&+\sum_{k=9}^{\infty}\frac{[-0.00960379, 0.0206284]}{k^{3.5}}\sin(kx).
 \end{align*}
 We also integrate the unbounded set
 \begin{equation*}
     X^{0,h} = \{h\in C_0(0,\pi): h_k\in [-1,1] \text{ for } k\geq 3  \text{ and } h_k = 0 \text{ for } k = 1,2,3\},
 \end{equation*}
and obtain
\begin{align*}
    V(1,0,X^0)X^{0,h}&\subset
    10^{-3}[-2.18135, 2.18131]\sin(x) + 10^{-3}[-1.9155, 1.91549]\sin(2x) 
    \\&+ 10^{-4}[-2.45136, 2.45132]\sin(3x) + 10^{-4}[-1.43496, 1.43496]\sin(4x) 
    \\&+ 10^{-5}[-1.55205, 1.55202]\sin(5x) + 10^{-6}[-8.2667, 8.26666]\sin(6x) + 
    \\&+
    10^{-7}[-8.66491, 8.66291]\sin(7x) + -10^{-7}[-4.46976, 4.46866]\sin(8x)
    \\&+\sum_{k=9}^\infty \frac{[-0.000899498, 0.000892498]}{k^{3}}\sin(kx).
\end{align*}
Using Lemmas \ref{lem:C0operatorNormEstimation} and \ref{lem:C0NormEstimation} we compute
\begin{equation*}
    \sup_{x\in X^0}\norm{V(1,0,x)}_{\mathcal{L}(C_0(0,\pi))}\leq 0.421383.
\end{equation*}
Hence, we can apply Lemma \ref{lem:AbstractNonAutonomousAttracting} to deduce that the periodic orbit, for which we previously proved the existence, is locally attracting. Moreover, the estimate  on the norm of $V$ gives the estimate on the rate of exponential convergence in the same Lemma.
We also integrate the set
 \begin{equation*}
     X^{1,h} = \left\{h\in C_0(0,\pi): h_k\in [-k^2,k^2] \text{ for } k\in\mathbb{N}\right\}.
 \end{equation*}
 and get the estimate

\begin{align*}
    V(1,0,X^0)X^{1,h}&\subset
    [-0.524123, 0.524116]\sin(x) + [-0.119484, 0.119484]\sin(2x) 
    \\&+ 10^{-2}[-5.83969, 5.83961]\sin(3x) + 10^{-3}[-8.95083, 8.95078]\sin(4x) 
    \\&+ 10^{-3}[-3.6993, 3.69925]\sin(5x) + 10^{-4}[-5.16246, 5.16239]\sin(6x) + 
    \\&+
    10^{-4}[-2.05122, 2.05075]\sin(7x) + -10^{-5}[-3.16219, 3.15943]\sin(8x)
    \\&+\sum_{k=9}^\infty \frac{[-0.0702182, 0.0696598]}{k^{2.5}}\sin(kx).
\end{align*}
 This allows us to get the decay estimate on the evolution of all Fourier modes. Namely, we have

    \begin{align}
        V(1,0,X^0)\sin(jx)&\subset\frac{1}{j^2}\biggl(
        [-0.524123, 0.524116]\sin(x) + [-0.119484, 0.119484]\sin(2x) \label{eq:VarationalDecayingEstimate}
        \notag\\&+ 10^{-2}[-5.83969, 5.83961]\sin(3x) + 10^{-3}[-8.95083, 8.95078]\sin(4x) 
        \notag\\&+ 10^{-3}[-3.6993, 3.69925]\sin(5x) + 10^{-4}[-5.16246, 5.16239]\sin(6x) + 
        \\&+
        10^{-4}[-2.05122, 2.05075]\sin(7x) + 10^{-5}[-3.16219, 3.15943]\sin(8x)
        \notag\\&+\sum_{k=9}^\infty \frac{[-0.0702182, 0.0696598]}{k^{2.5}}\sin(kx)
        \biggr),\notag
    \end{align}

for every $j\in\mathbb{N}$.
\begin{figure}[H]
    \centering
    \includegraphics[scale=0.5]{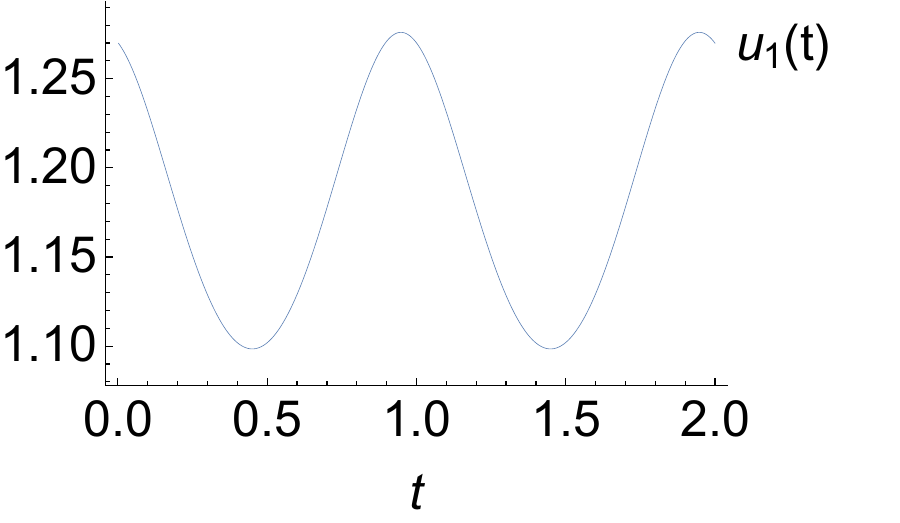}
    \quad
    \includegraphics[scale=0.5]{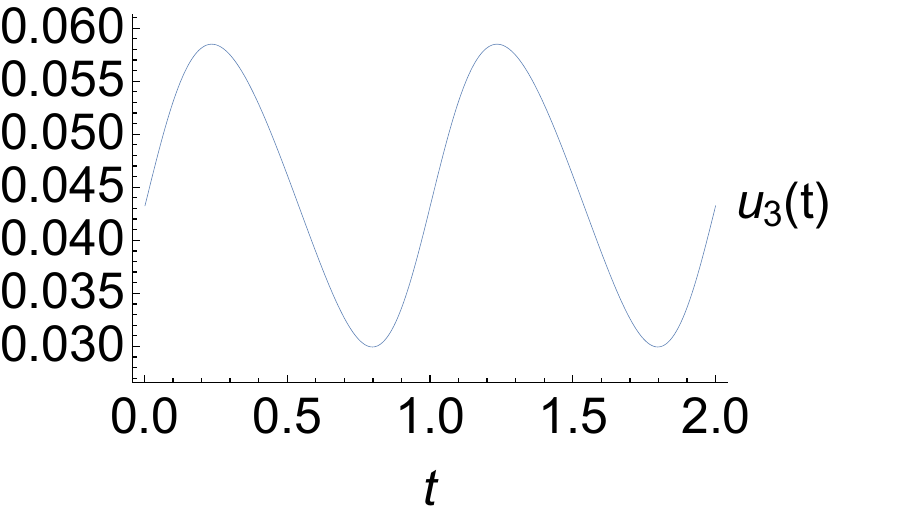}
    \noindent
    \includegraphics[scale=0.5]{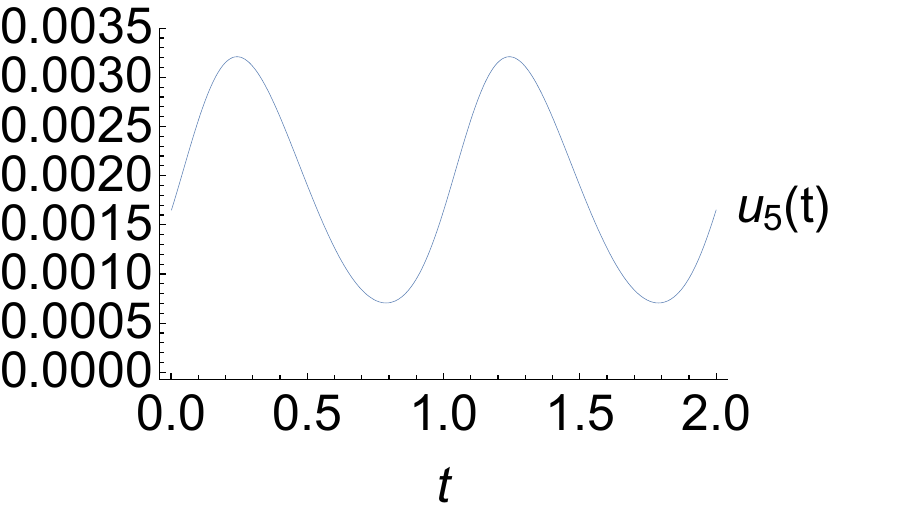}
    \includegraphics[scale=0.5]{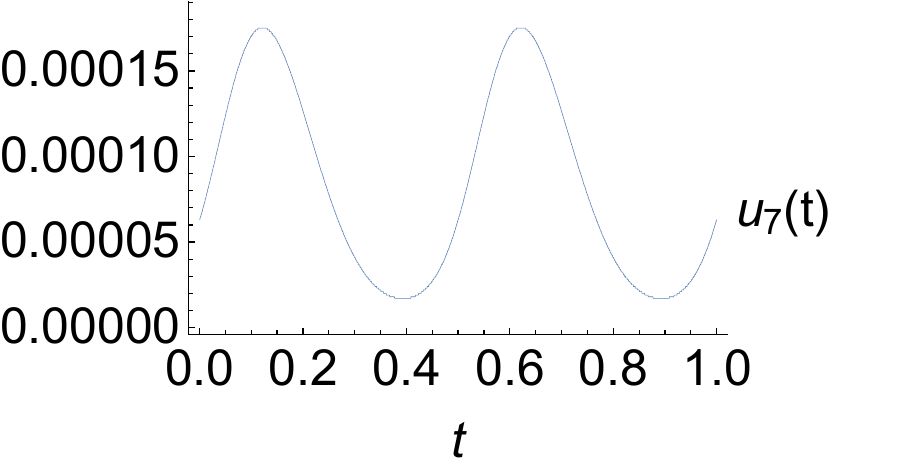}
    \caption{The approximation of evolution of Fourier modes for the periodic solution $u^*(t)$ from Theorem \ref{th:Chaffe-InfanteAtractingOrbit1}  for $i=1,3,5,7.$}
\end{figure}

Using the same scheme of the proof, as in the case of Theorem \ref{th:Chaffe-InfanteAtractingOrbit1} we prove the next result
\begin{theorem}\label{th:Chaffe-InfanteAtractingOrbit2}

The Chafee--Infante equation for $\lambda = 2,\,b(t)=\frac{3}{2}\sin(2\pi t) +1$ has a periodic orbit $u^*(t,x):\mathbb{R}\to C_0(0,\pi)$ with period $1$. This orbit is symmetric with respect to the line $x=\frac{\pi}{2}$, that is $u^*(t,\frac{\pi}{2}+x) = u^*(t,\frac{\pi}{2}-x)$ for every $x\in[0,\frac{\pi}{2}],\;t\in\R.$ Moreover, there exist $\delta>0,$   $D>0$ and $a<0$ such that for every 
initial data $t_0\in\mathbb{R}$ and $u^0\in C_0(0,\pi)$ satisfying $\norm{u^*(t_0)-u^0}_{C_0(0,\pi)}\leq\delta$ there exists a global solution $u:[t_0,\infty)\to C_0$ with $u(t_0)=u^0$ and the following holds 
\begin{equation*}
    \norm{u^*(t)-u(t)}_{C_0(0,\pi)} \leq De^{a(t-t_0)} \norm{u^*(t_0)-u(t_0)}_{C_0(0,\pi)}\quad\text{for}\;t\in[t_0,\infty).
\end{equation*}
\end{theorem}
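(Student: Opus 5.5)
The plan is to repeat, for $b(t)=\frac{3}{2}\sin(2\pi t)+1$, the computer assisted scheme used for Theorem \ref{th:Chaffe-InfanteAtractingOrbit1}. The new feature is that $b$ is negative on part of each period, so the cubic term is destabilizing there.

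First, we restrict to the subspace $V$ of Proposition \ref{prop:OddSubspaceChaffeInfante}, i.e.\ sine series with only odd modes. This subspace is invariant, and every function in it satisfies $u(t,\frac{\pi}{2}+x)=u(t,\frac{\pi}{2}-x)$, which will give the claimed symmetry of the orbit. We numerically iterate $\varphi(1,0,\cdot)$ to obtain an approximate fixed point $u^*$. We then build $X^0=X^0_P+X^0_Q$ as in the previous proof: a small box around $u^*$ in the first few odd modes, plus a tail $u_k\in[-C,C]/k^{s}$ with $s>1$ (say $s=4$). This set is convex and compact in $C_0(0,\pi)$, because the tail decay makes it compact.

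Next, the $C^0$ algorithm of Section \ref{sec:AlgorithmIntegration} is applied over $[0,1]$, using the enclosure Lemma \ref{lemma:enclosure} and the set evolution of Section \ref{sec:Evolving set}. This yields $X^1\supset\varphi(1,0,X^0)$, and we check $X^1\subset X^0$. Lemma \ref{lem:AbstractNonAutonomousOrbit} then gives a $1$-periodic orbit $u^*(t)$ in $V$. The successful integration also certifies that no blow-up occurs on the interval where $b<0$.

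For attractivity we bound $\sup_{x\in X^0}\norm{V(1,0,x)}_{\mathcal{L}(C_0(0,\pi))}<1$. The derivative must be controlled on all of $C_0(0,\pi)$, including even modes, so we integrate the variational system \eqref{eq:Chafee–InfanteVariational} with $u\in X^0$ in three pieces:
\begin{itemize}
\item the basis directions $\sin(jx)$ for $j\le m$, with $m$ small;
\item an unbounded tail set $X^{0,h}=\{h:h_k\in[-1,1],\,k>m,\ h_k=0,\,k\le m\}$, which uses $s_2=0$ and $s_1+s_2>1$;
\item the convolution estimates of Lemma \ref{lem:costimesSinDiverging}, for computing $u^2h$ on the tail set.
\end{itemize}
The use of unbounded sets is justified by Lemma \ref{lemma:enclosureLinearBounded}. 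Every $h$ with $\norm{h}_{C_0}\le1$ has $|h_k|\le 2$, so $h\in\sum_{j\le m}a_j\sin(jx)+2X^{0,h}$ with $|a_j|\le 2$. By linearity, the operator norm of $V(1,0,x)$ is then bounded via Lemmas \ref{lem:C0operatorNormEstimation} and \ref{lem:C0NormEstimation}.

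Once a bound $a<1$ is obtained, Lemma \ref{lem:AbstractNonAutonomousAttracting} with $\varepsilon$ such that $a+\varepsilon<1$ gives the exponential estimate with rate $\ln(a+\varepsilon)<0$. It also gives global existence for $u^0$ close to $u^*(t_0)$.

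I expect the main obstacle to be computational. With $b$ changing sign and having larger amplitude, the orbit is further from the autonomous regime, so wrapping and overestimation in the enclosures grow. This affects two things:
\begin{itemize}
\item the tail of the variational part during the stretch where $-3b u^2h$ has the expanding sign;
\item the inclusion $X^1\subset X^0$, together with the operator norm bound coming out below $1$.
\end{itemize}
If the first attempt fails, I would shrink the time steps over the interval where $b<0$ and enlarge the number of explicitly treated modes. I would also exploit the growth of the decay exponents $s_1,s_2$ during integration, which turns the unbounded set bounded, to tighten the tail estimates.
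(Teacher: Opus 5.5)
Your proposal is correct and follows essentially the same route as the paper, which proves this theorem by rerunning the scheme of Theorem \ref{th:Chaffe-InfanteAtractingOrbit1}: a $C^0$ validation of $\varphi(1,0,X^0)\subset X^0$ in the invariant odd-mode subspace, then Lemma \ref{lem:AbstractNonAutonomousOrbit}, then $C^1$ integration of a few basis directions $\sin(jx)$ plus the unbounded tail set $X^{0,h}$ (handled via Lemma \ref{lem:costimesSinDiverging}), giving $\sup_{x\in X^0}\norm{V(1,0,x)}_{\mathcal{L}(C_0(0,\pi))}<1$ through Lemmas \ref{lem:C0operatorNormEstimation} and \ref{lem:C0NormEstimation}, and finally Lemma \ref{lem:AbstractNonAutonomousAttracting}. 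As in the paper, what remains is for the rigorous computation to actually succeed, and you correctly identify that as the outstanding step.
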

The function $b(t)$ from the above theorem is interesting, as the nonlinear term at some times $t$ is multiplied by the positive number, which implies the possible blow-up of the solution.
Consider the autonomous ODE 
\begin{equation*}
    \frac{d}{dt}u(t) = u^3.
\end{equation*}
For $u(0)\not=0$ we have $\lim_{t\to t_{\max}(u(0))} |u(t)|= \infty$ with $t_{\max}(u(0))<\infty.$
The known results about the existence of the solutions for the non-autonomous Chafee--Infante  equation rely on the assumption that the term multiplying the $u^3$ is non-positive for all times (see \cite{ChafeeInfanteLangaCarvalho}). 
In Theorem \ref{th:Chaffe-InfanteAtractingOrbit2} we obtain the existence of a global nonzero solution for the situation when for some times $t$ the term $u^3$ is multiplied by a positive constant, which is also a new result in the theory of the existence of solutions for the non-autonomous Chafee--Infante equation.
\begin{figure}[H]
    \centering
    \includegraphics[scale=0.5]{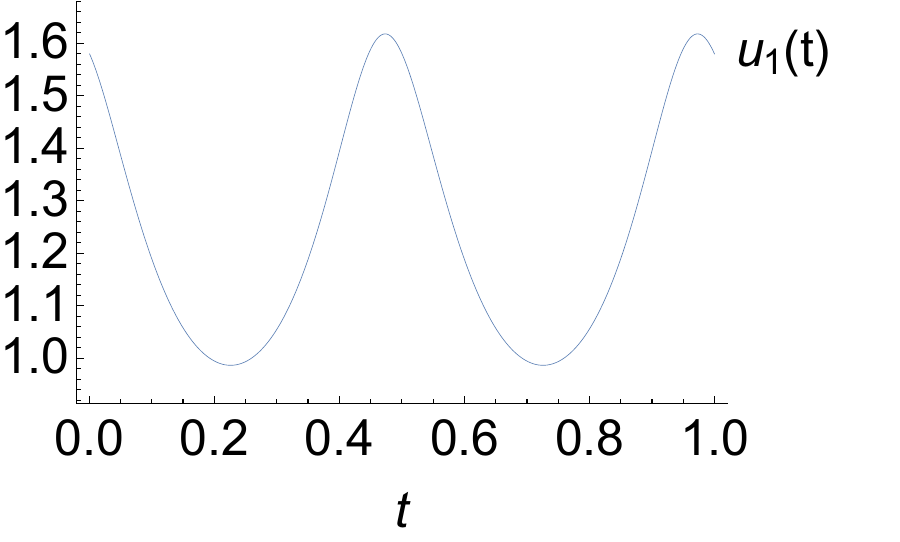}
    \quad
    \includegraphics[scale=0.5]{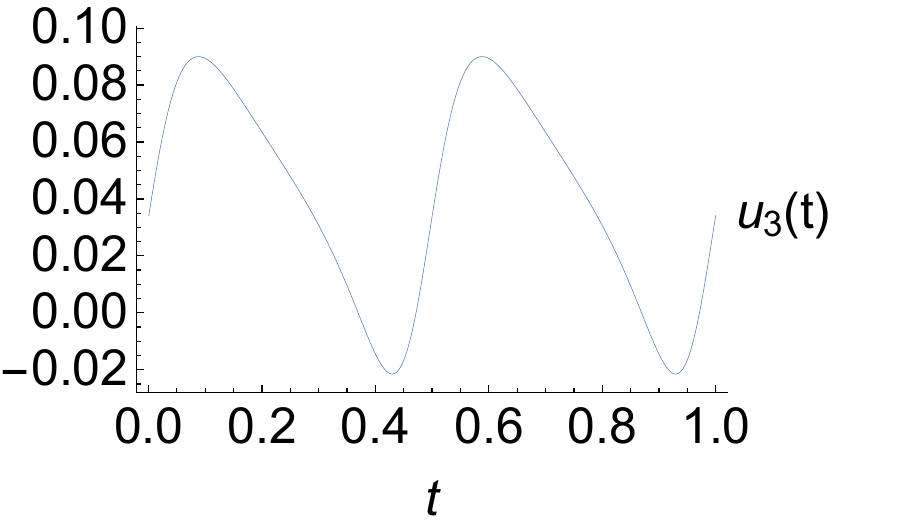}
    \noindent
    \includegraphics[scale=0.5]{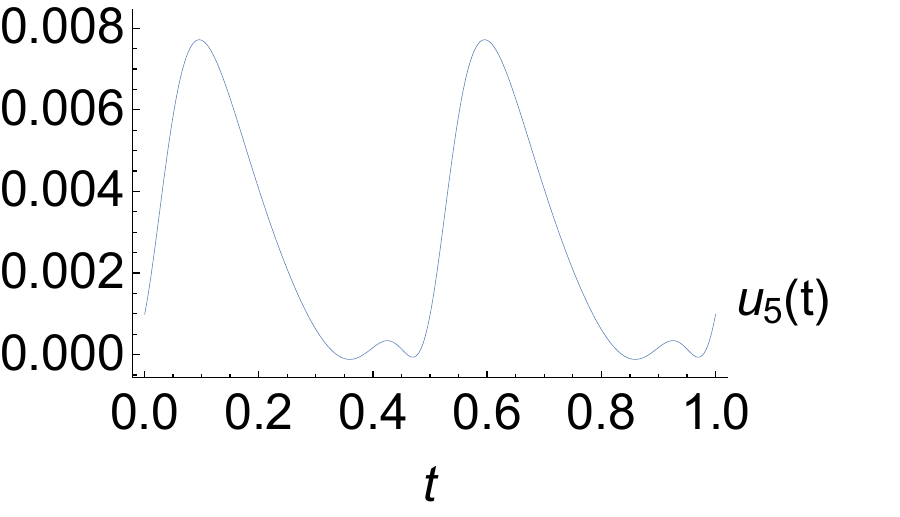}
    \includegraphics[scale=0.5]{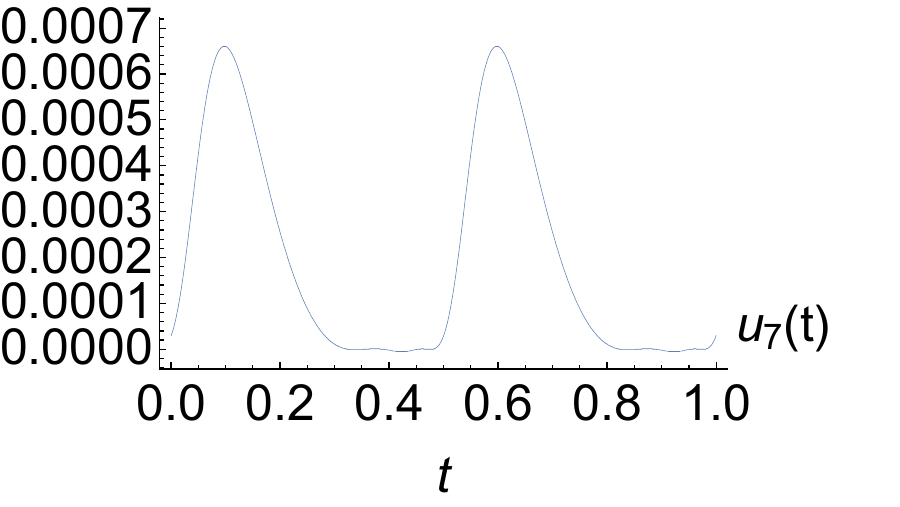}
        \caption{The approximation of evolution of Fourier modes for the periodic solution $u^*(t)$ from Theorem \ref{th:Chaffe-InfanteAtractingOrbit2}  for $i=1,3,5,7.$ }
\end{figure}
For $\lambda = 2$ the  linearization at zero, which is a fixed point, has one usable direction. For $\lambda = 5$ the linearization of zero has two unstable directions. We also prove the existence in this case.
\begin{theorem}\label{th:Chaffe-InfanteAtractingOrbit3 }
The Chafee--Infante equation for $\lambda = 5,\,b(t)=\frac{1}{2}\sin(2\pi t) +1$ has a periodic orbit $u^*(t,x):\mathbb{R}\to C_0(0,\pi)$ with period $1$. This orbit is symmetric with respect to the line $x=\frac{\pi}{2},$ that is $u^*(t,\frac{\pi}{2}+x) = u^*(t,\frac{\pi}{2}-x)$ for every $x\in[0,\frac{\pi}{2}],\;t\in \R.$

Moreover, there exist $\delta>0,$   $D>0$ and $a<0$ such that for every 
initial data $t_0\in\mathbb{R}$ and $u^0\in C_0(0,\pi)$ satisfying $\norm{u^*(t_0)-u^0}_{C_0(0,\pi)}\leq\delta$ there exists a global solution $u:[t_0,\infty)\to C_0(0,\pi)$ with $u(t_0)=u^0$ and the following holds 
\begin{equation*}
    \norm{u^*(t)-u(t)}_{C_0(0,\pi)} \leq De^{a(t-t_0)} \norm{u^*(t_0)-u(t_0)}_{C_0(0,\pi)},\quad\text{for}\;t\in[t_0,\infty).
\end{equation*}
\end{theorem}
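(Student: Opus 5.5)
The proof follows the scheme used for Theorem \ref{th:Chaffe-InfanteAtractingOrbit1}: a $C^0$ computation gives existence through Lemma \ref{lem:AbstractNonAutonomousOrbit}, and a $C^1$ computation gives attraction through Lemma \ref{lem:AbstractNonAutonomousAttracting}.

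\textbf{Step 1: existence of the periodic orbit.}
1. Restrict the dynamics to the invariant subspace $V$ of Proposition \ref{prop:OddSubspaceChaffeInfante}, which consists of functions with only odd sine modes. This subspace carries the claimed symmetry about $x=\frac{\pi}{2}$.
2. Iterate the time-one map $\varphi(1,0,\cdot)$ numerically for $\lambda=5$, $b(t)=\frac12\sin(2\pi t)+1$, starting from a large multiple of $\sin(x)$. Since $\lambda=5$, both $\sin(x)$ and $\sin(2x)$ are unstable for the linearization at zero; restricting to $V$ removes the $\sin(2x)$ direction. The iteration should converge to an approximate fixed point $u^*$, a positive one-hump profile. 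Its amplitude should be larger than for $\lambda=2$, roughly of order $\sqrt{(\lambda-1)/b}$.
3. Build $X^0=X^0_P+X^0_Q$. Take $X^0_P$ to be $u^*$ plus small boxes (around $10^{-4}$) on the first several odd modes. Take $X^0_Q$ with $u_k\in[-C,C]/k^4$ for odd $k$ beyond the Galerkin dimension. This set is convex and compact in $C_0(0,\pi)$.
4. Rigorously integrate over $[0,1]$ with the $C^0$ algorithm of Section \ref{sec:Evolving set}, using enclosures validated by Lemma \ref{lemma:enclosure}.
5. Check $X^1\subset X^0$. Lemma \ref{lem:AbstractNonAutonomousOrbit} then gives a $1$-periodic orbit lying in $V$, hence symmetric about $x=\frac{\pi}{2}$.

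\textbf{Step 2: contraction of the derivative.}
1. Integrate the variational system \eqref{eq:Chafee–InfanteVariational} with $u$-initial data $X^0$ and $h$-initial data $e_j=\sin(jx)$ for $j=1,\dots,m$, where $m$ is the Galerkin dimension. Here all even and odd modes are needed, because $V(1,0,x)$ must be bounded on all of $C_0(0,\pi)$.
2. Integrate the unbounded tail set $X^{0,h}=\{h: h_k\in[-1,1]\ \text{for } k>m,\ h_k=0\ \text{for } k\le m\}$, which has decay $s_2=0$. The nonlinear term $u^2h$ is bounded using Lemma \ref{lem:costimesSinDiverging}, which applies because $s_1+s_2=4>1$. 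Enclosures are validated by Lemma \ref{lemma:enclosureLinearBounded}, which allows unbounded sets for the linear equation.
3. Any $h\in C_0(0,\pi)$ can be written as a finite combination of $e_1,\dots,e_m$ plus a multiple of an element of $X^{0,h}$. By linearity of $V$, the images computed above bound $V(1,0,x)$ on all of $C_0(0,\pi)$.
4. Use Lemmas \ref{lem:C0operatorNormEstimation} and \ref{lem:C0NormEstimation} to verify $\sup_{x\in X^0}\norm{V(1,0,x)}_{\mathcal L(C_0(0,\pi))}=a<1$.
5. Lemma \ref{lem:AbstractNonAutonomousAttracting} then gives the exponential estimate with rate $\ln(a+\varepsilon)<0$. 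Global existence of the perturbed solution follows from the same lemma.

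\textbf{Main obstacle.} I expect the hardest step to be obtaining $a<1$ in the $\mathcal L(C_0(0,\pi))$ norm. That norm is estimated through Fourier coefficients, so it overestimates, and for $\lambda=5$ the second mode is only weakly stabilized near the orbit.

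If the one-period bound fails, I would first try three remedies:
1. Increase the Galerkin dimension $m$ and shrink the initial boxes, so that the $\sin(2x)$ row is computed accurately.
2. Replace the single period by $\varphi(k,0,\cdot)$ for some $k\ge2$, and apply Lemma \ref{lem:AbstractNonAutonomousAttracting} with period $k$.
3. Split $[0,1]$ into subintervals and compose the derivative estimates. Because the unbounded tail smooths, with $s_2$ increasing by about $2$ per step, tail norms become small after the first substep.

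A secondary difficulty is keeping the enclosures for the $u$-equation tight, since the amplitude is larger than for $\lambda=2$. This may require a smaller time step.
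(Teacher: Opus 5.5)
Your proposal is correct and follows the paper's route: the paper treats this case with the same computer-assisted scheme as Theorem \ref{th:Chaffe-InfanteAtractingOrbit1}, namely a $C^0$ verification of $\varphi(1,0,X^0)\subset X^0$ for an odd-mode set $X^0$ (existence and symmetry via Lemma \ref{lem:AbstractNonAutonomousOrbit} and invariance of odd modes), then a $C^1$ computation on the directions $\sin(jx)$ plus the unbounded tail set $X^{0,h}$, the bound $\sup_{x\in X^0}\norm{V(1,0,x)}<1$ via Lemmas \ref{lem:C0operatorNormEstimation} and \ref{lem:C0NormEstimation}, and finally Lemma \ref{lem:AbstractNonAutonomousAttracting}. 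The one thing to correct is your anticipated obstacle: near the one-hump orbit the $\sin(2x)$ direction is strongly, not weakly, contracted. A one-mode heuristic gives per-period multipliers of roughly $e^{-2(\lambda-1)}$ for $\sin(x)$ and $e^{-(\lambda+2)}$ for $\sin(2x)$, which matches the computed $0.144$ and $0.0168$ for $\lambda=2$, so the $\lambda=5$ case should be easier than $\lambda=2$ and your fallback remedies are unlikely to be needed.
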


For these parameters the linearization at zero, which is fixed point, has two unstable directions
\begin{figure}[H]
    \centering
    \includegraphics[scale=0.5]{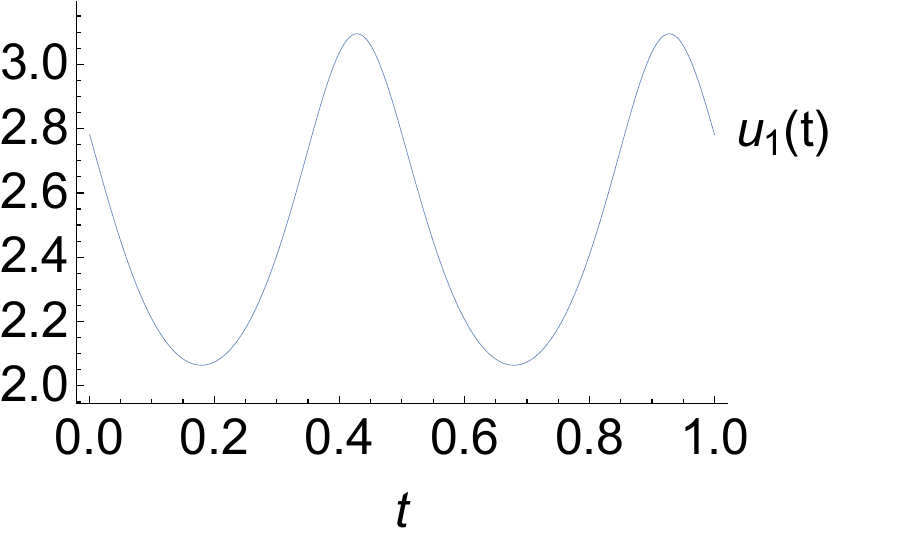}
    \quad
    \includegraphics[scale=0.5]{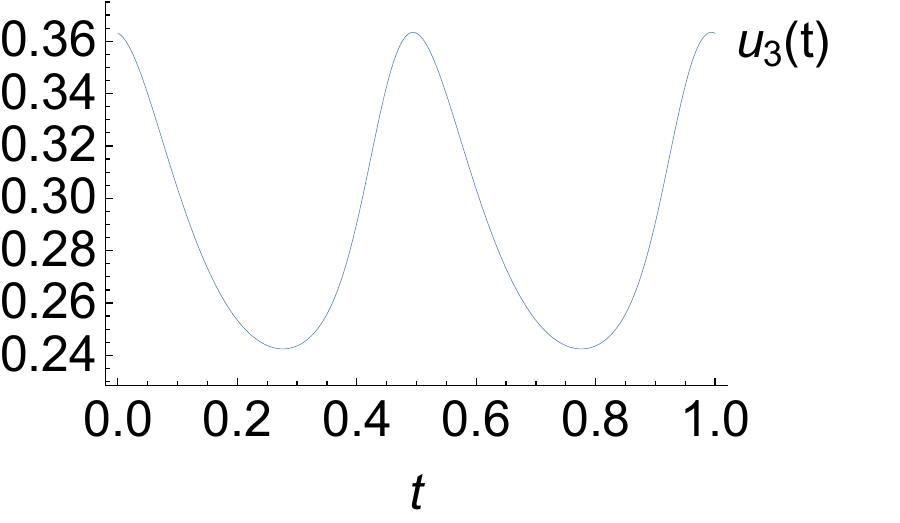}
    \noindent
    \includegraphics[scale=0.5]{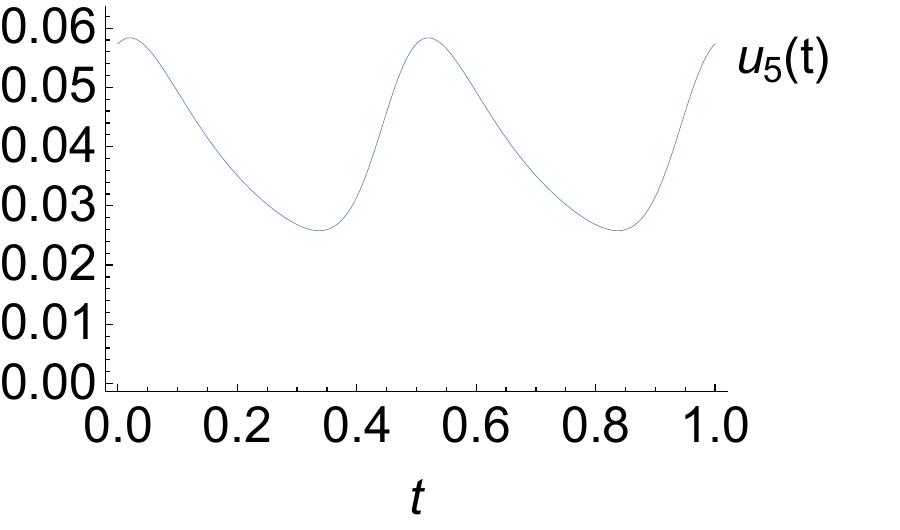}
    \includegraphics[scale=0.5]{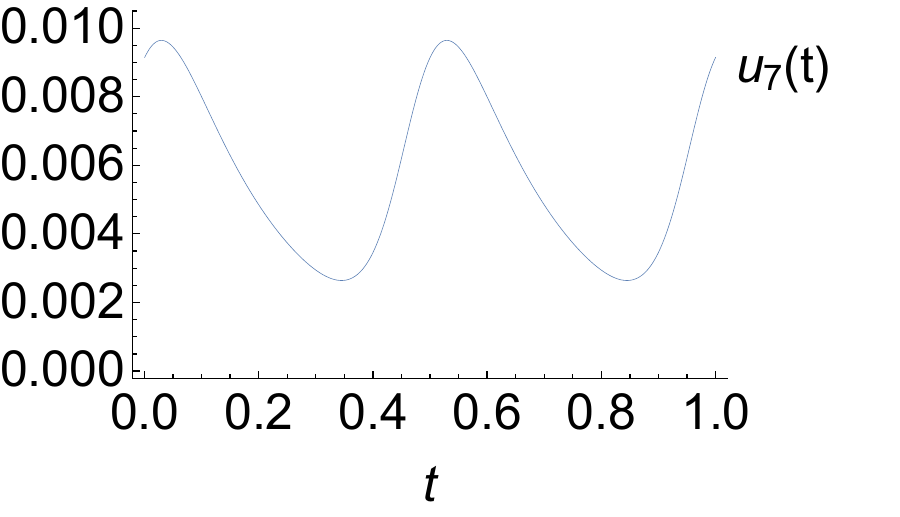}
    \caption{The approximation of evolution of Fourier modes for the periodic solution $u^*(t)$ from Theorem \ref{th:Chaffe-InfanteAtractingOrbit3 }  for $i=1,3,5,7.$}
\end{figure}
\subsection {Rigorous \texorpdfstring{$C^1$}{C1} computations around the unstable orbits for the Chafee--Infante system}\label{sec:chafee-Infate Addition}
In this section we demonstrate that the $C^1$ computations can be 
preformed for the orbit of the system, which are not necessarily stable. The results of these computations can further used for schemes of computer assisted proofs which require the estimate of the derivative of solution such as, for example the Krawczyk and the Newton--Kantorovich method of fining zeros, and validating the cone conditions. These techniques are useful to determine the full dynamics of the system including the heteroclinic connection between orbits. 

To demonstrate that the $C^1$ algorithm can be used in unstable context, we realize a $C^1$ computation for the Chafee--Infante system around zero, which is an unstable equilibrium. The linearized equation for the solution $u(t)\equiv 0$ has the form
 \begin{equation}\label{eq:Chafee–InfanteVariational-2}
    \begin{cases}
        h_t = h_{xx}+\lambda h\quad \text{ for }\; (x,t)\in (0,\pi)\times(t_0,\infty),\\
        h(t,x) = 0
         \ \; \text{for}\; (x,t)\in \{0,\pi\} \times (t_0,\infty),
    \\ h(t_0,x) = h^{0}(x)\ \  \textrm{ for}\ x\in(0,\pi).
\end{cases}
\end{equation}
This system can be solved explicitly and the solution is given by the following formula for the  linear operator $V$, which becomes diagonal
\begin{equation*}
    V(1,0,0)\sin(kx) = e^{\lambda-k^2 }\sin(kx).
\end{equation*}
To check if our $C^1$  computations are correct, we conduct them for set $X_0$ centered around zero in the following way:
\begin{equation*}
    X^0= \{u\in C_0(0,\pi): u_k\in[-0.001,0.001]\text{ for $1\leq k\leq 8$}\ \ \text{and}\ \ \ u_k\in\frac{[-1,1]}{k^5}\ \text{for}\ k>8  \}.
\end{equation*}
For parameters $\lambda = 5$ and $b(t)= \frac{1}{2}\sin(2\pi t)+1,$ we obtained the following results:
\begin{align*}
    V(1,0,X^0)\sin(x) &\subset \sin(x)[54.5786, 54.6091]+10^{-3}\sin(2x)[-1.49879, 1.49879]
    \\&+10^{-3}\sin(3x)[-4.42365, 4.42365]+10^{-4}\sin(4x)[-3.79411, 3.79411]
    \\&+10^{-5}\sin(5x)[-1.05063, 1.05063]+10^{-7}\sin(6x)[-1.38014, 1.38014]
    \\&+10^{-9}\sin(7x)[-3.13962, 3.13962]+10^{-11}\sin(8x)[-9.8782, 9.8782]
    \\&+\sum_{k=9}^\infty\sin(kx)\frac{10^{-4}[-8.46563, 8.46563]}{k^9},
\end{align*}
\begin{align*}
V(1,0,X^0)\sin(2x) &\subset 10^{-4}\sin(x)\,[-4.9245,\,4.9245]+\sin(2x)\,[2.71768,\,2.71846] \\
&+10^{-5}\sin(3x)\,[-3.05667,\,3.05667]
+10^{-4}\sin(4x)\,[-1.89699,\,1.89699] \\
&+10^{-5}\sin(5x)\,[-1.59162,\,1.59162]
+10^{-7}\sin(6x)\,[-4.33905,\,4.33905] \\
&+10^{-9}\sin(7x)\,[-5.58493,\,5.58493]
+10^{-10}\sin(8x)\,[-1.23607,\,1.23607]
\\&+\sum_{k=9}^\infty\sin(kx)\frac{10^{-3}[-1.48816, 1.48816]}{k^9},
\end{align*}
\begin{align*}
V(1,0,X^0)\sin(3x) &\subset10^{-4}\sin(x)\,[-1.21297,\,1.21297]
+10^{-6}\sin(2x)\,[-8.14237,\,8.14237] \\
&+10^{-2}\sin(3x)\,[1.83116,\,1.83168]
+10^{-7}\sin(4x)\,[-1.79892,\,1.79892] \\
&+10^{-6}\sin(5x)\,[-1.13882,\,1.13882]
+10^{-8}\sin(6x)\,[-9.33953,\,9.33953] \\
&+10^{-9}\sin(7x)\,[-2.50739,\,2.50739]
+10^{-11}\sin(8x)\,[-3.1764,\,3.1764]
\\&+\sum_{k=9}^\infty\sin(kx)\frac{10^{-4}[-2.66474, 2.66474]}{k^9},
\end{align*}
\begin{align*}
V(1,0,X^0)\sin(kx) &\subset \frac{1}{k^3}\biggl( 10^{-2}\sin(x)\,[-9.41054,\,9.41054]
+10^{-3}\sin(2x)\,[-5.36919,\,5.36919]  \\
&+10^{-5}\sin(3x)\,[-5.07224,\,5.07224] 
+10^{-3}\sin(4x)\,[-1.07028,\,1.07028] \\
&+10^{-7}\sin(5x)\,[-3.19303,\,3.19303]
+10^{-8}\sin(6x)\,[-6.15958,\,6.15958] \\
&+10^{-9}\sin(7x)\,[-4.87869,\,4.87869]
+10^{-10}\sin(8x)\,[-1.29837,\,1.29837] 
\\&+\sum_{k=9}^\infty\sin(jx)\frac{10^{-4}[-6.37198, 6.37198]}{k^9}\biggr) .
\end{align*}
We can validate the results of the computations $C^1$ by verifying if $V(1,0,0)\sin(kx) = \sin(kx)e^{\lambda-k^2}\in V(1,0,X_0)\sin(kx)$ for every $k\in\mathbb{N.}$ 
\begin{comment}
We have that
\begin{align*}
    e^4\sin(x)&\in[54.598,54.99]\sin(x) \in  V(1,0,X^0)\sin(x)\\
    e\sin(2x)&\in[2.71828,2,71829]\sin(2x) \in  V(1,0,X^0)\sin(2x) \\
    e^{-4}\sin(3x)&\in[0.0183156,0.0183157]\sin(3x) \in  V(1,0,X^0)\sin(3x) 
    e^{-11}\sin(3x)&\in[0.0183156,0.0183157]\sin(3x) \in  V(1,0,X^0)\sin(3x) 
\end{align*}
\end{comment}

For the same set of parameters (that is, $\lambda=5$ and $b(t)=\frac{1}{2}\sin(2\pi t)+1$), we also perform $C^1$ computations for the estimation of the periodic orbit that was numerically found by the Newton method, with the following value at zero:
\begin{equation*}
    u^*(x) = 1.55005\sin(2x) + 0.0206521\sin(6x) + 0.000307565\sin(10x)
\end{equation*}
We also computed numerically the approximate eigenvalues and eigenvectors of $V(1,0,u^*).$ The  three largest eigenvalues and corresponding eigenvectors are the following:
\begin{align*}
    &\lambda_1= 9.39278,\\
    & h^1(x)=\sin(x) - 0.0622682\sin(3x) + 0.0477333\sin(5x) - 0.0019565\sin(7x) + 0.0012689\sin(9x),\\
    &\lambda_2= 0.141303,\\
& h^2(x)=\sin(2x) + 0.0407946\sin(6x) + 0.00101412\sin(10x),\\
   & \lambda_3 = 0.00205114,\\
& h^3(x) = 0.252499\sin(x) + \sin(3x) + 0.0128624\sin(5x) + 0.0346569\sin(7x) + 0.00034405\sin(9x).
\end{align*}
For each numerical eigenvector, we rigorously compute the value of the derivative $V(1,0,u^*)$. 
We obtain results 
\[
\begin{aligned}
V(1,0,u^*)h^1 &\in   \left[ 9.38888,\ 9.39677 \right]\sin(x)  +  \left[ -0.58526,\ -0.584491 \right] \sin(3x)\\
&+\left[ 0.448155,\ 0.44855 \right]\sin(5x)  +  10^{-2}\left[ -1.84853,\ -1.82517 \right]\sin(7x)\\
& + 10^{-2} \left[ 1.18584,\ 1.19841 \right]\sin(9x)\\
& +10^{-3}  \left[ -1.05746,\ 1.08118 \right]\sin(2x) + 10^{-4} \left[ -2.63108,\ 2.51109 \right]\sin(4x) \\ 
&+ 10^{-5} \left[ -8.03344 \,\ 7.69113 \right]\sin(6x) \\
& +  10^{-5}\left[ -9.32192 ,11.5735 \right]\sin(8x) + 10^{-4}\left[ -1.82383,\ 1.13402 \right]\sin(10x) \\
&+ \sum_{k=11}^\infty\frac{[-83.2595, 84.9202]}{k^4}\sin(kx),
\end{aligned}
\]

\[
\begin{aligned}
V(1,0,u^*)h^2 &\in  [0.141277,\ 0.141332] \sin(2x) + 10^{-3}  [5.76309,\ 5.766] \sin(6x)  \\  &+10^{-4}[1.41087,\ 1.47455] \sin(10x) \\
&+ 10^{-3}  [-1.48017,\ 1.38319] \sin(x) + 10^{-4}  [-1.06345,\ 1.13117] \sin(3x) \\
&+ 10^{-7}  [-1.25325,\ 1.93574] \sin(4x) + 10^{-5}  [-7.39712,\ 6.91201] \sin(5x)\\ 
&+ 10^{-6}  [-4.51753,\ 6.30446] \sin(7x) + 10^{-6}  [-0.538893,\ 1.41615] \sin(8x) \\
&+ 10^{-6}  [-3.89992,\ 3.72937] \sin(9x)\\
&+ \sum_{k=11}^{\infty} \frac{[-5.87352,\ 16.3835]}{k^5} \sin(kx),
\end{aligned}
\]

\[
\begin{aligned}
V(1,0,u^*)h^3 &\in 10^{-4}  [4.15171,\ 4.21634] \sin(x) +  10^{-3}  [2.0574,\ 2.05793] \sin(3x)\\
&+ 10^{-5}  [2.14784,\ 2.17875] \sin(5x) + 10^{-5}  [7.13218,\ 7.13414] \sin(7x)\\
&+10^{-7}  [5.75659,\ 5.83978] \sin(9x)\\
&+10^{-8}  [-2.27147,\ 3.12153] \sin(2x)  + 10^{-11}  [-2.67125,\ 4.28956] \sin(4x)\\
&+ 10^{-9}  [-0.946603,\ 1.2896] \sin(6x)  + 10^{-11}  [-4.54136,\ 7.88288] \sin(8x)\\
&+  10^{-11}  [-4.98652,\ 9.63311] \sin(10x) \\
&+ \sum_{k=11}^{\infty} \frac{[-7.26828,\ 33.0753]}{k^{7}} \sin(kx).
\end{aligned}
\]
By integrating the unbounded set 
\begin{equation*}
    X^{0,h} =\{h\in C_0(-\pi,\pi): x_k\in[-k^3,k^3], \text{ for $k\geq 11$ and $h_k =0$ for $1\leq k<10$}\},
\end{equation*}
we obtained the following estimate on high modes:
\[
\begin{aligned}
V(1,0,u^*)\sin(jx) &\in \frac{1}{j^3}( \left[ -6.29691,\ 6.29691 \right]\sin(x) + 10^{-2}  \left[ -2.59359,\ 2.59359 \right] \sin(2x) \\
&+ 10^{-1}  \left[ -3.97055,\ 3.97055 \right] \sin(3x) + 10^{-6}  \left[ -4.48913,\ 4.48913 \right] \sin(4x) \\
&+ 10^{-1}  \left[ -3.00694,\ 3.00694 \right] \sin(5x) + 10^{-3}  \left[ -1.05829,\ 1.05829 \right] \sin(6x) \\
&+ 10^{-2}  \left[ -1.2498,\ 1.2498 \right] \sin(7x) + 10^{-7}  \left[ -1.67188,\ 1.67188 \right] \sin(8x) \\
&+ 10^{-3}  \left[ -7.99605,\ 7.99605 \right] \sin(9x) + 10^{-5}  \left[ -2.63878,\ 2.63878 \right] \sin(10x) \\
&+ \sum_{k=11}^{\infty} \frac{[-67.993,\ 67.993]}{k^5} \sin(kx) ), \text{ for $j\geq11$}.
\end{aligned}
\]
\subsection{Results for the fractional Burgers equation}
For the Burgers equations with the fractional Laplacian we use the same strategy as for the Chafee--Infante equation, that is, for given parameters we first find an approximation of a fixed point for a time shift map $\varphi(1,0,\cdot):H^2_{\text{odd}}(-\pi,\pi)\to H^2_{\text{odd}}(-\pi,\pi),$ where $\varphi$ is the process generated by equation \eqref{eq:BurgersFractionalLaplacianEquation}. 
Then, we construct the set $X^0$ which is convex and compact in space $H^2_{\text{odd}}(-\pi,\pi)$ and contains the found approximation. We  validate rigorously that $\varphi(1,0,X^0)\subset X^0,$ which with use of Lemma \ref{lem:AbstractNonAutonomousOrbit} proves the existence of a periodic orbit.   
Then, we use the result on $C^1$ computation  to validate that these periodic orbits are locally attracting. As the initial data for the variational equation we take the set
\begin{equation*}
    X^{0,h}=\left\{h\in H^2_{\text{odd}}(-\pi,\pi): h_k\in\frac{[-1,1]}{k^2} , k\in\mathbb{N}\right\},
\end{equation*}
so we merge all initial data in one computation. 
As the set $X^{0,h}$ contains the ball $B_{H^2_\text{odd}(-\pi,\pi)}\left(0,\frac{1}{\pi}\right),$ the estimate on $\norm{V(1,0,X^0) X^{0,h}}_{H^2_{\text{odd}}(-\pi,\pi)}$ allows us to get an upper bound on the value of operator norm $\norm{V(0,1,X^0)}_{\mathcal{L}(H^2_\text{odd}(-\pi,\pi))}.$ We use Lemmas \ref{le:BurgersNormEstimate1}, \ref{le:BurgersNormEstimate2} which allows us to check if the following inequality holds
\begin{equation*}
    \norm{V(0,1,X^0)}_{\mathcal{L}(H^2_\text{odd}(-\pi,\pi))} <1.
\end{equation*}
This, from Lemma \ref{lem:AbstractNonAutonomousAttracting}, implies that the found periodic orbit is locally attracting. We perform this procedure for parameters 
$\alpha = \frac{1}{2} + \frac{k}{64}$ with $k = 64, 63, \ldots, 1,\;$  $\nu = \frac{1}{2}$, $b(t)=\sin(2\pi t)\sin(x)$ starting with $k=64$. We  decrease $k$ to see how close we can get to the critical value $\alpha = \frac{1}{2}$. 
If, for some parameter, the estimates blow up, we add additional $20$ modes that are explicitly represented. When the number of modes exceeds $200$, we end the procedure. Half of these modes are estimated using differential inclusion \eqref{inclusion}. 
For the variational equation, the number of explicitly represented modes is doubled. The procedure stopped at $\alpha = \frac{39}{64}.$ 
For this value and for the larger ones, we have proved the existence of periodic orbits. However, for this parameter, the estimated norm was greater than $1$, so local attraction is not proved. 
The attractivity was proved for $\alpha = \frac{44}{64}$ and higher values. It should be possible to obtain a proof of local attraction even for lower $\alpha$ by considering different norms related to eigenvalues of $V(1,0,u^*),$ where $u^*$ is numerically found approximation of fixed point for the time shift map. 
In Table \ref{tab:results}, we summarize the results of the computations for each $\alpha$.  Note that for the Burgers equation, we have not obtained decay estimates in every mode, in contrast to the estimates \eqref{eq:VarationalDecayingEstimate} established for the Chafee--Infante equation. For this we would need additional estimates, analogous to the ones in Lemmas \ref{lem:costimesSinSlowConvering}, \ref{lem:costimesSinDiverging}, for convolutions of two sine series, one of which having  increasing or slowly decreasing Fourier coefficients.

\begin{table}[htbp]
\centering

\begin{tabular}{c c r r c c c}

\toprule
$\alpha$ & Modes num. & $\mathcal{C}^0$  time (s) & $\mathcal{C}^1$ time (s) & Norm estimate & Periodic orbit & Is attracting  \\
\midrule
$1$ & 20 & 0.931884 & 3.51811 & 0.527725 & Validated & Validated \\
$\frac{63}{64}$ & 20 & 1.070080 & 3.71883 & 0.537911 & Validated & Validated \\
$\frac{62}{64}$ & 20 & 0.937165 & 3.41315 & 0.548725 & Validated & Validated \\
$\frac{61}{64}$ & 20 & 0.926880 & 3.45846 & 0.560281 & Validated & Validated \\
$\frac{60}{64}$ & 20 & 1.031430 & 3.66910 & 0.572615 & Validated & Validated \\
$\frac{59}{64}$ & 20 & 1.039940 & 3.65708 & 0.585792 & Validated & Validated \\
$\frac{58}{64}$ & 20 & 0.960964 & 6.11920 & 0.599885 & Validated & Validated \\
$\frac{57}{64}$ & 20 & 0.947647 & 3.48008 & 0.614978 & Validated & Validated \\
$\frac{56}{64}$ & 20 & 0.968047 & 3.55536 & 0.631160 & Validated & Validated \\
$\frac{55}{64}$ & 20 & 0.903815 & 3.36945 & 0.648535 & Validated & Validated \\
$\frac{54}{64}$ & 20 & 0.915306 & 3.28212 & 0.667228 & Validated & Validated \\
$\frac{53}{64}$ & 40 & 4.472560 & 22.6533 & 0.687355 & Validated & Validated \\
$\frac{52}{64}$ & 40 & 4.317000 & 22.2874 & 0.709070 & Validated & Validated \\
$\frac{51}{64}$ & 40 & 4.733100 & 20.1593 & 0.732538 & Validated & Validated \\
$\frac{50}{64}$ & 40 & 4.716560 & 24.6262 & 0.757954 & Validated & Validated \\
$\frac{49}{64}$ & 40 & 4.618390 & 22.6761 & 0.785534 & Validated & Validated \\
$\frac{48}{64}$ & 40 & 4.804520 & 23.7703 & 0.815517 & Validated & Validated \\
$\frac{47}{64}$ & 60 & 12.63170 & 70.4163 & 0.848196 & Validated & Validated \\
$\frac{46}{64}$ & 60 & 15.08880 & 69.7765 & 0.883946 & Validated & Validated \\
$\frac{45}{64}$ & 60 & 11.76120 & 68.6825 & 0.923228 & Validated & Validated \\
$\frac{44}{64}$ & 80 & 28.79050 & 150.907 & 0.966050 & Validated & Validated \\
$\frac{43}{64}$ & 80 & 26.85230 & 149.554 & 1.013750 & Validated & Not validated \\
$\frac{42}{64}$ & 100 & 47.10740 & 272.479 & 1.064740 & Validated & Not validated \\
$\frac{41}{64}$ & 120 & 82.90000 & 460.657 & 1.122450 & Validated & Not validated \\
$\frac{40}{64}$ & 140 & 124.6850 & 759.319 & 1.188230 & Validated & Not validated \\
$\frac{39}{64}$ & 180 & 268.4580 & 1579.23 & 1.262940 & Validated & Not validated \\
\bottomrule

\end{tabular}
\caption{The results for different values of parameter $\alpha.$  The computations
were conducted on single-threaded Intel Core i5-12600KF CPU. For a given $\alpha$, the second column contains the number of explicit modes used in computation, the third and fourth ones the time in seconds that $C^0$ and $C^1$ computations have taken, and fifth column contains the estimate on the  $\norm{V(0,1,X^0)}_{\mathcal{L}(H^2_\text{odd}(-\pi,\pi))}.$ The last two columns contain the information if the existence of periodic orbit and its local attractivity were validated. }
\label{tab:results}
\end{table}

\section{Appendix}\label{sec:Appendix}
\subsection{Some useful inequalities}
The following two results are basic, so we skip the proofs. We  use them several times in this work.
\begin{proposition}\label{pr:sumEstimation}
Let $s>1.$ The following inequality holds
\begin{equation}\label{eq:sumEstimation}
\sum_{i=1+n}^\infty \frac{1}{i^s}\leq \frac{n^{1-s}}{s-1} .
\end{equation}
\end{proposition}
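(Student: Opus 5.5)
The plan is the standard integral comparison for a decreasing function. Since $s>1$, the function $x\mapsto x^{-s}$ is positive and strictly decreasing on $(0,\infty)$. So for every integer $i\geq n+1$ and every $x\in[i-1,i]$ we have $i^{-s}\leq x^{-s}$. Integrating over $[i-1,i]$ then gives
\begin{equation*}
\frac{1}{i^s}\leq \int_{i-1}^{i}\frac{dx}{x^s}.
\end{equation*}
Here we use $n\geq 1$, so that $i-1\geq n\geq 1>0$ and the integrand is bounded on each interval.

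Next we sum over $i=n+1,\dots,N$ for a finite $N$. The intervals $[i-1,i]$ tile $[n,N]$, so
\begin{equation*}
\sum_{i=n+1}^{N}\frac{1}{i^s}\leq \int_n^N x^{-s}\,dx=\frac{n^{1-s}-N^{1-s}}{s-1}\leq \frac{n^{1-s}}{s-1},
\end{equation*}
where the last inequality uses $N^{1-s}>0$. The partial sums are therefore bounded uniformly in $N$. Since the terms are positive, the partial sums are also monotone, so the series converges. Letting $N\to\infty$ gives \eqref{eq:sumEstimation}.

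There is no real obstacle here. The only points needing care are that $s>1$ ensures convergence of $\int_n^\infty x^{-s}\,dx$ and the correct sign of $1/(s-1)$, and that $n\in\mathbb{N}$ with $n\geq1$, so the left endpoint $n$ of the integral is not $0$. For $n=0$ the bound would be meaningless, which is consistent with the paper's convention that $\mathbb{N}$ starts at $1$.
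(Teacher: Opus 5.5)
Your proof is correct: the integral comparison $i^{-s}\leq\int_{i-1}^{i}x^{-s}\,dx$, summed over $i\geq n+1$ and passed to the limit, gives exactly the stated bound. The paper omits the proof as basic, and this is the standard argument it has in mind; your remark that $n\geq 1$ is needed is also apt.
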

 
\begin{proposition}
Let $s>1$. If $a,b>0$ then the following inequality holds
\begin{equation}\label{eq:inequality}
    (a+b)^s\leq 2^{s-1}(a^s+b^s).
\end{equation}
\end{proposition}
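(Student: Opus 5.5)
The plan is to prove \eqref{eq:inequality} by convexity of the power function. For $s>1$ the map $\phi(x)=x^s$ is convex on $[0,\infty)$, since $\phi''(x)=s(s-1)x^{s-2}\ge 0$ for $x>0$ and $\phi$ is continuous at $0$. Applying Jensen's inequality with the two equal weights $\tfrac12,\tfrac12$ to the points $a$ and $b$ gives
\begin{equation*}
\left(\frac{a+b}{2}\right)^s\leq \frac{a^s+b^s}{2}.
\end{equation*}
Multiplying both sides by $2^s$ yields $(a+b)^s\leq 2^{s-1}(a^s+b^s)$, which is the claim.

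As a self-contained alternative, avoiding Jensen, I would use homogeneity. Both sides are homogeneous of degree $s$ in $(a,b)$, so it suffices to consider $a+b=1$ with $a=x\in(0,1)$ and $b=1-x$. The inequality then reads $g(x):=x^s+(1-x)^s\geq 2^{1-s}$.

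The function $g$ is convex and symmetric about $x=\tfrac12$, so its minimum on $[0,1]$ is at $x=\tfrac12$. Indeed, $g'(x)=s\bigl(x^{s-1}-(1-x)^{s-1}\bigr)$ vanishes only at $x=\tfrac12$, because $x\mapsto x^{s-1}$ is strictly increasing for $s>1$. The minimum value is $g(\tfrac12)=2\cdot 2^{-s}=2^{1-s}$, as required.

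There is no real obstacle here. The only point needing care is that convexity requires $s\geq 1$, and this is exactly the hypothesis. The estimate also extends to $a,b\ge 0$ by continuity, which is presumably how it is used later when bounding convolution terms such as $i^s\le 2^{s-1}\bigl((i-j)^s+j^s\bigr)$.
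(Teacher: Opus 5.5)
Your proof is correct. Midpoint convexity of $x\mapsto x^s$ for $s>1$ gives $\bigl(\tfrac{a+b}{2}\bigr)^s\le \tfrac{a^s+b^s}{2}$, and multiplying by $2^s$ is exactly the claim.

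The paper gives no proof to compare against, since it calls this inequality basic and skips it. Your Jensen step is the standard justification. The homogeneity reduction to $g(x)=x^s+(1-x)^s\ge 2^{1-s}$ is a second valid proof, but it is not needed.
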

The following proposition is used several times in our estimates.
\begin{proposition}\label{eq:integralfracExtEstimation}
    Let $\gamma\in [0,1)$ and $t_0\leq t$ then for every $\varepsilon>0$ and $\kappa> 0$ there exist $\alpha\geq 
\kappa$ such that
\begin{equation*}
    \int_{t_0}^t \frac{1}{(t-s)^\gamma}e^{\alpha(s-t_0)}ds\leq \varepsilon e^{\alpha(t-t_0)}.
\end{equation*}
\end{proposition}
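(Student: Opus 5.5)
The plan is a direct computation: substitute $u=t-s$ to reduce the integral to a one-dimensional estimate in $\alpha$, then let $\alpha\to\infty$. Writing $T:=t-t_0\ge 0$ and $u=t-s$, we get
\begin{equation*}
\int_{t_0}^t \frac{1}{(t-s)^\gamma}e^{\alpha(s-t_0)}\,ds = e^{\alpha T}\int_0^{T} u^{-\gamma}e^{-\alpha u}\,du .
\end{equation*}
Thus it suffices to show that $\int_0^T u^{-\gamma}e^{-\alpha u}\,du\le\varepsilon$ for some $\alpha\ge\kappa$. The choice of $\alpha$ should depend only on $\varepsilon,\kappa,\gamma$ and not on $t_0,t$; this uniformity is what the applications in Lemmas \ref{rem:LongTimeExistence}, \ref{lem:VariationalProperties} and \ref{lemma:enclosureLinearBounded} implicitly need.

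To remove the dependence on $T$, we bound the integral from above by extending it to $[0,\infty)$ and rescaling with $v=\alpha u$:
\begin{equation*}
\int_0^{T} u^{-\gamma}e^{-\alpha u}\,du\le\int_0^\infty u^{-\gamma}e^{-\alpha u}\,du=\alpha^{\gamma-1}\Gamma(1-\gamma).
\end{equation*}
Since $\gamma\in[0,1)$, we have $1-\gamma>0$, so the Gamma function value is finite and $\alpha^{\gamma-1}\to0$ as $\alpha\to\infty$.

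If one prefers to avoid the Gamma function, split the integral at $u=1$. On $[0,1]$ use $e^{-\alpha u}\le 1$ only on $[0,\delta]$, which contributes at most $\delta^{1-\gamma}/(1-\gamma)$. On $[\delta,\infty)$ use $u^{-\gamma}\le\delta^{-\gamma}$, which contributes at most $\delta^{-\gamma}e^{-\alpha\delta}/\alpha$. First choose $\delta$ so that the first term is at most $\varepsilon/2$, then choose $\alpha$ large so that the second term is at most $\varepsilon/2$.

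Finally, take $\alpha:=\max\{\kappa,(\Gamma(1-\gamma)/\varepsilon)^{1/(1-\gamma)}\}$, which gives the claim. There is no real obstacle here. The only care needed is to note that the degenerate case $t=t_0$ is trivial, and that the bound holds uniformly in $t,t_0$, which is stronger than the proposition as literally stated.
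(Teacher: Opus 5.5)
Your proof is correct. The paper states this proposition without proof, so there is no argument in the paper to compare against.

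Your reduction is the natural one. The substitution $u=t-s$ gives the integral as $e^{\alpha(t-t_0)}\int_0^{t-t_0}u^{-\gamma}e^{-\alpha u}\,du$, and this is at most $e^{\alpha(t-t_0)}\,\Gamma(1-\gamma)\,\alpha^{\gamma-1}$. Your explicit choice $\alpha=\max\{\kappa,(\Gamma(1-\gamma)/\varepsilon)^{1/(1-\gamma)}\}$ then works.

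You are also right that uniformity of $\alpha$ in $t$ and $t_0$ is what the paper actually uses. In the contraction arguments of Lemmas \ref{rem:LongTimeExistence} and \ref{lem:VariationalProperties}, a single $\alpha$ must serve every $t\in[t_0,t_0+T]$. The literal quantifier order of the statement lets $\alpha$ depend on $t$, but your bound removes that dependence.

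There is one small slip in your Gamma-free variant. You announce a split at $u=1$ but actually split at $u=\delta$. The two estimates $\delta^{1-\gamma}/(1-\gamma)$ and $\delta^{-\gamma}e^{-\alpha\delta}/\alpha$ are correct for the split at $\delta$.
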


\subsection{Algebra}\label{sec:algebra} The decay of the Fourier coefficients for smooth periodic functions is related to their regularity: if a periodic function  $u:\mathbb{R}\to \mathbb{R}$ is of class $C^s$, then its coefficients must decay as $\frac{1}{i^s}.$ 
Clearly, the product of two $C^s$ functions also has regularity $C^s$. This is related to our results of this section, which state that if two functions, represented in the sine or cosine Fourier series, have some given decay of the Fourier coefficients of the form $O\left(\frac{1}{i^s}\right)$ for $s>1$ then their product must have the same decay. Moreover, we provide exact estimates for the Fourier coefficients of the product: such estimates  are needed in rigorous computations of nonlinear polynomial terms present in the equations. 
In the following Lemmas we assume that functions $u,v\in L^2(0,\pi)$ have the Fourier expansion given in terms of either sine or cosine series. Such functions can be extended to, respectively, odd and even functions on the interval $\mathbb{R}$.
Estimations in these lemmas also work for extended functions, as they have the same Fourier coefficients. Therefore, they can be used for estimating the nonlinearity in the Chafee--Infante and Burgers equations.

\begin{lemma}\label{lem:formulasSinTimesSin}
    Assume that $u,v\in L^2(0,\pi)$ and  their Fourier expansions in the sine series in $L^2(0,\pi)$ are the following
    \begin{equation*}
        u(x) = \sum_{k=1}^\infty u_k\sin(kx),
        \quad
        v(x) = \sum_{k=1}^\infty v_k\sin(kx).
    \end{equation*}
    Then if we consider the cosine expansion of the product $uv$ 
    \begin{equation*}
        (uv)(x) = (uv)_0 + \sum_{k=1}^\infty(uv)_k \cos(kx), 
    \end{equation*}
    its coefficients are given by the formulas
\begin{equation}\label{eq:sinsin_formula}
    (uv)_0 = \frac{1}{2}\sum_{i=1}^\infty u_iv_i\quad
    (uv)_k =
    \frac{1}{2}\sum_{i=1}^\infty u_{i+k}v_i +
    \frac{1}{2}\sum_{i=1}^\infty u_{i}v_{i+k}-
    \frac{1}{2}\sum_{i=1}^{k-1} u_{i}v_{k-i}.
\end{equation}
\end{lemma}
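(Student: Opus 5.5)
The plan is to use the product-to-sum identity $\sin(ix)\sin(jx)=\frac{1}{2}\left(\cos((i-j)x)-\cos((i+j)x)\right)$ and then collect the cosine coefficients of the product. First I will treat finite sums. Let $u^N=\sum_{i=1}^N u_i\sin(ix)$ and $v^N=\sum_{j=1}^N v_j\sin(jx)$. Then
\begin{equation*}
u^Nv^N=\frac{1}{2}\sum_{i,j=1}^N u_iv_j\cos((i-j)x)-\frac{1}{2}\sum_{i,j=1}^N u_iv_j\cos((i+j)x).
\end{equation*}
Next I group the terms by frequency $k\geq 0$, using that cosine is even.

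The constant term gets contributions only from the diagonal $i=j$ in the first sum. This gives $\frac{1}{2}\sum_i u_iv_i$. For $k\geq1$, the first sum contributes from two sets of index pairs: $i=j+k$, which gives $\frac{1}{2}\sum_j u_{j+k}v_j$, and $j=i+k$, which gives $\frac{1}{2}\sum_i u_iv_{i+k}$. The second sum contributes from the pairs with $i+j=k$, $i,j\geq1$, which gives $-\frac{1}{2}\sum_{i=1}^{k-1}u_iv_{k-i}$. This yields exactly \eqref{eq:sinsin_formula} for the truncations, with the sums cut off at $N$.

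The remaining step is passing to the limit $N\to\infty$, and I expect it to be the only delicate point. Since $u,v\in L^2(0,\pi)$, we have $u^N\to u$ and $v^N\to v$ in $L^2(0,\pi)$. Hence $u^Nv^N\to uv$ in $L^1(0,\pi)$ by the Cauchy--Schwarz inequality. The cosine coefficients are given by $(w)_k=\frac{2}{\pi}\int_0^\pi w\cos(kx)\,dx$, or with $\frac{1}{\pi}$ for $k=0$. They are continuous linear functionals on $L^1$, so the coefficients of $u^Nv^N$ converge to those of $uv$.

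On the formula side, each infinite series, for example $\sum_i u_{i+k}v_i$, converges absolutely by Cauchy--Schwarz, since $(u_i),(v_i)\in\ell^2$. Therefore the truncated sums converge to the full series. The finite convolution term is fixed once $N\geq k$. Equating the two limits proves the formulas. I will also note that $uv\in L^1$ has a well-defined cosine expansion in this coefficient sense, which is how the statement is to be read.
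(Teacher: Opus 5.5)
Your proposal is correct and complete. The paper states this lemma without proof, treating it as the standard product-to-sum computation you carry out; your truncation argument, with $L^1$-continuity of the cosine coefficient functionals and absolute convergence of the series via Cauchy--Schwarz in $\ell^2$, supplies the justification for passing to infinite sums that the paper leaves implicit.
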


\begin{lemma}\label{lem:formulasCosTimesSin}
    Assume that $u,v\in L^2(0,\pi)$ and their  corresponding Fourier expansions  in the cosine and sine series in $L^2(0,\pi)$  are the following
    \begin{equation*}
    u(x) = u_0 + \sum_{k=1}^\infty u_k\cos(kx), \quad v(x) = \sum_{k=1}^\infty v_k \sin(kx).
    \end{equation*}
    Then if we consider the following sine   Fourier expansion of the product $uv$ 
    \begin{equation*}
        (uv)(x) = \sum_{i=k}^\infty(uv)_k \sin(kx), 
    \end{equation*}
    its coefficients are given by the formula
\begin{equation}
(uv)_k = u_0v_k +
\frac{1}{2}\sum_{i=1}^\infty u_iv_{i+k}
-\frac{1}{2}\sum_{i=1}^\infty u_{i+k}v_i
+\frac{1}{2}\sum_{i=1}^{k-1}  u_{i}v_{k-i}.
\end{equation}
\end{lemma}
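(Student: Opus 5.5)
The plan is to expand both series and reduce the product with the product-to-sum identity
\[
\cos(ix)\sin(jx)=\tfrac12\bigl(\sin((i+j)x)-\sin((i-j)x)\bigr).
\]
Write $u=u_0+\sum_{i\ge1}u_i\cos(ix)$ and $v=\sum_{j\ge1}v_j\sin(jx)$. For each term I apply the identity and then collect the coefficient of $\sin(kx)$, for $k\in\mathbb{N}$.

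To make the rearrangement legitimate, I first prove the formula for finite trigonometric polynomials, i.e.\ truncations $u^N,v^N$. In that setting every sum is finite. I then pass to the limit. Since $u^N\to u$ and $v^N\to v$ in $L^2(0,\pi)$, we get $u^Nv^N\to uv$ in $L^1(0,\pi)$. Hence each sine coefficient $(uv)_k=\frac{2}{\pi}\int_0^\pi uv\sin(kx)\,dx$ converges. On the right-hand side, the infinite sums $\sum_i u_iv_{i+k}$ and $\sum_i u_{i+k}v_i$ converge absolutely by Cauchy--Schwarz, since $(u_i),(v_i)\in\ell^2$. Their truncated versions therefore converge as well, and the identity passes to the limit. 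Equivalently, I can compute $\frac{2}{\pi}\int_0^\pi uv\sin(kx)\,dx$ directly: expand $u\sin(kx)$ with the same identity and apply Parseval for the sine basis on $(0,\pi)$.

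The main step is the bookkeeping in the finite case. The term $u_0v_k$ comes from $i=0$. For $i\ge1$ the identity contributes three pieces:
\begin{itemize}
\item $\sin((i+j)x)$ gives $\sin(kx)$ when $j=k-i$ with $1\le i\le k-1$. This yields $\frac12\sum_{i=1}^{k-1}u_iv_{k-i}$.
\item $-\sin((i-j)x)$ with $i>j$, so $i=j+k$, gives $-\frac12\sum_{j\ge1}u_{j+k}v_j$.
\item $-\sin((i-j)x)=\sin((j-i)x)$ with $j>i$, so $j=i+k$, gives $+\frac12\sum_{i\ge1}u_iv_{i+k}$.
\item The case $i=j$ produces $\sin 0=0$ and contributes nothing.
\end{itemize}
Adding these reproduces the stated formula.

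The only point needing care is the sign handling in the $i-j$ case, which is where the asymmetry between the two infinite sums arises. I will also note the analogous derivation of Lemma \ref{lem:formulasSinTimesSin} via $\sin\sin=\frac12(\cos(i-j)-\cos(i+j))$ as a consistency check.
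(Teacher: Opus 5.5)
Your proof is correct: the product-to-sum bookkeeping, including the sign handling that yields $+\frac12\sum_i u_iv_{i+k}$ and $-\frac12\sum_i u_{i+k}v_i$, gives exactly the stated formula. The paper states this lemma without any proof, treating it as the routine product-to-sum computation you carry out; your truncation argument with Cauchy--Schwarz (or, equivalently, the Parseval route via the sine coefficients of $u\sin(kx)$) additionally justifies rearranging the double series, which the paper leaves implicit and which matters because $uv$ is only in $L^1(0,\pi)$.
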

The following lemmas give us the estimates on the result of multiplication of $u$ and $v$ which are both represented in the sine Fourier series. The first $n$ coefficients of $u$ and $v$ are given explicitly, and the coefficients indexed by numbers larger than $n$ are expressed by polynomial decay. For the proof of the Lemma see 
\begin{lemma}\label{lem:sinTimesSin}
Assume that $u,v\in L^2(0,\pi)$ are two functions having the following expansions in the sine Fourier series
\begin{equation}
u(x) = \sum_{k=1}^\infty u_k\sin(kx), \quad v(x) = \sum_{k=1}^\infty v_k \sin(kx).
\end{equation}
Moreover we assume that for some
$n\in\mathbb{N}$ and $s>1$ the following estimates hold
\begin{equation}
    u_k \in \frac{C_u[-1,1]}{k^s},\quad v_k \in \frac{C_v[-1,1]}{k^s}\quad\text{for $k>n.$}
\end{equation}
where $C_v>0,C_u>0$. Then the following estimates hold for the coefficients of the product $uv$ in the cosine Fourier series. For $k=0$ we have
    \begin{equation}
        (uv)_0 \in \frac{1}{2}\sum_{i=1}^n u_i v_i + \frac{C_uC_v}{2}\frac{ n^{1-2s}}{ 2s - 1 }[-1,1],
    \end{equation}
for $ 1\leq k \leq 2n$ we have
    \begin{equation}
        (uv)_k \in
        \frac{1}{2}\sum_{i=1}^n u_i v_{i+k}  +
        \frac{1}{2}\sum_{i=1}^n u_{i+k} v_{i} - \frac{1}{2}\sum_{i=1}^{k-1}u_{i}v_{k-i} + C_uC_v\frac{ n^{1-2s}}{ 2s - 1 } [-1,1],
    \end{equation}
    and for $k>2n$ we have
    \begin{equation}
        (uv)_k \in \frac{D[-1,1]}{k^s},\quad D =
    \frac{1}{2}\left(
    \sum_{i=1}^n (C_u|v_i| + C_v|u_i|)
    \left(1+\left(\frac{2n+1}{2n+1-i}\right)^s\right)+
    C_uC_v(2+2^s)\frac{n^{1-s}}{s-1}
    \right).
    \end{equation}
\end{lemma}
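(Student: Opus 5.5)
We start from the exact formulas of Lemma \ref{lem:formulasSinTimesSin},
\begin{equation*}
(uv)_0=\frac12\sum_{i=1}^\infty u_iv_i,\qquad (uv)_k=\frac12\sum_{i=1}^\infty u_{i+k}v_i+\frac12\sum_{i=1}^\infty u_iv_{i+k}-\frac12\sum_{i=1}^{k-1}u_iv_{k-i}.
\end{equation*}
In each infinite sum we split off the part in which both indices are at most $n$; it is computed exactly from the explicitly known coefficients. The rest has at least one index above $n$, and there we use $|u_j|\le C_u j^{-s}$, $|v_j|\le C_vj^{-s}$ for $j>n$ together with Proposition \ref{pr:sumEstimation} to bound it.

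\textbf{Cases $k=0$ and $1\le k\le 2n$.} For $k=0$ the tail $\frac12\sum_{i>n}u_iv_i$ is bounded by $\frac{C_uC_v}{2}\sum_{i>n}i^{-2s}\le \frac{C_uC_v}{2}\frac{n^{1-2s}}{2s-1}$.

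For $1\le k\le 2n$ the finite convolution $\sum_{i=1}^{k-1}u_iv_{k-i}$ is kept exactly, with the implicit understanding that any index above $n$ is replaced by its interval enclosure. In the two infinite sums we keep the terms $i\le n$, where $u_{i+k}$ may be enclosed or explicit, and we bound the tails $i>n$. In the tails both indices exceed $n$, so each tail is at most $\frac{C_uC_v}{2}\sum_{i>n}i^{-2s}$. Adding the two halves gives $C_uC_v\frac{n^{1-2s}}{2s-1}$, which matches the stated term.

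\textbf{Case $k>2n$.} This case needs genuine decay in $k$, and it is the main obstacle. We split each sum by the size of its indices.

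In $\sum_{i\ge1}u_{i+k}v_i$:
\begin{itemize}
\item For $i\le n$, the factor $u_{i+k}$ satisfies $|u_{i+k}|\le C_u(i+k)^{-s}\le C_uk^{-s}$. This contributes $C_uk^{-s}\sum_{i\le n}|v_i|$.
\item For $i>n$, we use $(i+k)^{-s}\le k^{-s}$. This contributes $C_uC_vk^{-s}\frac{n^{1-s}}{s-1}$.
\end{itemize}
The sum $\sum_{i\ge1}u_iv_{i+k}$ is treated symmetrically. Together these two sums produce the "$1$" part of $(1+(\cdot)^s)$ and one $n^{1-s}/(s-1)$ term per sum.

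The convolution $\sum_{i=1}^{k-1}u_iv_{k-i}$ is the delicate part. Its terms split into three groups:
\begin{itemize}
\item $i\le n$: here $k-i>n$, so $|v_{k-i}|\le C_v(k-i)^{-s}=C_vk^{-s}\bigl(\tfrac{k}{k-i}\bigr)^s$. The map $k\mapsto k/(k-i)$ is decreasing and $k\ge 2n+1$, so $\bigl(\tfrac{k}{k-i}\bigr)^s\le\bigl(\tfrac{2n+1}{2n+1-i}\bigr)^s$. This yields the factor $\bigl(\tfrac{2n+1}{2n+1-i}\bigr)^s$ in $D$.
\item $k-i\le n$: symmetric to the previous group, with the roles of $u$ and $v$ exchanged.
\item Both $i>n$ and $k-i>n$: one of $i$, $k-i$ is at least $k/2$, so that factor is at most $2^sk^{-s}$ times its constant. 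Summing the other factor over indices above $n$ gives $C_uC_v2^sk^{-s}\frac{n^{1-s}}{s-1}$; a careful handling of the two halves (using \eqref{eq:inequality} if needed) gives at most this bound.
\end{itemize}

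Collecting everything with the prefactor $\frac12$ gives exactly the constant
\begin{equation*}
D=\frac12\Bigl(\sum_{i\le n}(C_u|v_i|+C_v|u_i|)\bigl(1+(\tfrac{2n+1}{2n+1-i})^s\bigr)+C_uC_v(2+2^s)\frac{n^{1-s}}{s-1}\Bigr).
\end{equation*}
The only step requiring care is avoiding double counting in the middle convolution range while keeping the constant $2^s$ rather than $2^{s+1}$.
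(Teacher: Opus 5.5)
Your decomposition is the same splitting the paper uses in the proofs of Lemmas \ref{lem:costimesSinDiverging} and \ref{lem:costimesSinSlowConvering}. Everything outside the middle block of the convolution is correct:
the tails for $k=0$ and $1\le k\le 2n$, the pieces of $\sum_i u_{i+k}v_i$ and $\sum_i u_iv_{i+k}$, and the two boundary groups $i\le n$, $k-i\le n$ (via monotonicity of $k\mapsto k/(k-i)$) all give the corresponding terms of $2D$.

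The gap is the step you flag and leave open, and the argument you write down for it is wrong. Bounding the larger of $i,k-i$ by $k/2$ gives
\[
\sum_{i=n+1}^{k-n-1}\frac{1}{i^s(k-i)^s}\le \frac{2^s}{k^s}\sum_{i=n+1}^{k-n-1}\frac{1}{\min(i,k-i)^s}.
\]
Here $\min(i,k-i)$ takes every value in $\{n+1,\dots,\lceil k/2\rceil-1\}$ twice. So for large $k$ the right-hand sum is close to $2\sum_{j>n}j^{-s}$, which in general is not bounded by $\frac{n^{1-s}}{s-1}$. For $s=2$, $n=1$ and large odd $k$ it tends to $2(\pi^2/6-1)\approx 1.29>1$. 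Splitting into the halves $i\le k/2$ and $i>k/2$ and using $(k-i)^{-s}\le 2^sk^{-s}$ (resp.\ $i^{-s}\le 2^sk^{-s}$) uniformly on each half is the same estimate. It only yields $2^{s+1}k^{-s}\frac{n^{1-s}}{s-1}$, so it does not establish the stated $D$.

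The missing idea is the partial-fraction identity $\frac{1}{i(k-i)}=\frac1k\bigl(\frac1i+\frac1{k-i}\bigr)$. This is the same device the paper uses for the last sum in the proof of Lemma \ref{lem:costimesSinSlowConvering}. Raise it to the power $s$ and apply \eqref{eq:inequality} to get $\frac{1}{i^s(k-i)^s}\le\frac{2^{s-1}}{k^s}\bigl(\frac1{i^s}+\frac1{(k-i)^s}\bigr)$. This is never worse than your bound $\frac{2^s}{k^s}\max\bigl(\frac{1}{i^s},\frac{1}{(k-i)^s}\bigr)$, and it is better by nearly a factor $2$ when $i$ or $k-i$ is small compared with $k$. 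Since $i\mapsto k-i$ maps $\{n+1,\dots,k-n-1\}$ onto itself, the two resulting sums coincide. Proposition \ref{pr:sumEstimation} then gives
\[
\sum_{i=n+1}^{k-n-1}\frac{1}{i^s(k-i)^s}\le\frac{2^s}{k^s}\sum_{i=n+1}^{k-n-1}\frac1{i^s}\le\frac{2^s}{k^s}\,\frac{n^{1-s}}{s-1}.
\]
With this in place of your third bullet, your collection of terms yields exactly the constant $D$ of the statement.
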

\begin{lemma}\label{lem:cosTimesSin}
Assume that $u,v\in L^2(0,\pi)$, and the Fourier expansions of these two functions in the cosine and sine series, are given, respectively, by
\begin{equation}
u(x) = u_0+\sum_{k=1}^\infty u_k\cos(kx), 
\quad 
v(x) = \sum_{k=1}^\infty v_k \sin(kx).
\end{equation}
Moreover, we assume that for some
$n\in\mathbb{N}$ and $s>1$ the following estimates hold
\begin{equation}
    u_k \in \frac{C_u[-1,1]}{k^s},\quad v_k \in \frac{C_v[-1,1]}{k^s}\quad\text{for $k>n.$}
\end{equation}
where $C_v>0,C_u>0$.
Then the following estimates on the coefficients of the product hold for $1\leq k\leq 2n$
\begin{equation}
  (uv)_{k}\in u_0v_k +
  \frac{1}{2} \left(
    \sum_{i = 1}^n v_{i+k}u_{i} -
    \sum_{i=  1}^n v_{i}u_{k + i} +
    \sum_{i = 1 }^{k-1}v_{i}u_{k - i}
    \right)
    + C_uC_v\frac{ n^{1-2s}}{ 2s - 1 } [-1,1],
\end{equation}
and  for $k>2n$ we have
\begin{equation}
        (uv)_k \in \frac{D[-1,1]}{k^s},
\end{equation}
with 
\begin{equation*}
    D = |u_0|C_v+
    \frac{1}{2}\left(
    \sum_{i=1}^n \left(C_u|v_i| + C_v|u_i|\right)
    \left(1+
    \left(\frac{2n+1}{2n+1-i}\right)^s
    \right)+
    C_uC_v(2+2^s)\frac{n^{1-s}}{s-1}
    \right).
\end{equation*}
\end{lemma}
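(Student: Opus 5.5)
We prove Lemma \ref{lem:cosTimesSin} by inserting the interval enclosures into the exact formula of Lemma \ref{lem:formulasCosTimesSin} and splitting each series into an explicitly known part (indices $\le n$) and a tail (indices $>n$). Recall
\begin{equation*}
(uv)_k = u_0v_k + \frac{1}{2}\sum_{i=1}^\infty u_iv_{i+k} - \frac{1}{2}\sum_{i=1}^\infty u_{i+k}v_i + \frac{1}{2}\sum_{i=1}^{k-1} u_iv_{k-i}.
\end{equation*}

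\textbf{Case $1\le k\le 2n$.} The first and last terms are kept exactly. In the two infinite sums we keep the indices $i\le n$ explicitly and write the tails as
\begin{equation*}
\frac{1}{2}\sum_{i>n}u_iv_{i+k} \quad\text{and}\quad \frac{1}{2}\sum_{i>n}u_{i+k}v_i .
\end{equation*}
In both tails both indices exceed $n$, so each summand is bounded by $C_uC_v/(i^s(i+k)^s)\le C_uC_v i^{-2s}$. Proposition \ref{pr:sumEstimation} with exponent $2s$ gives a bound of $\frac{1}{2}C_uC_v\frac{n^{1-2s}}{2s-1}$ for each tail. Their sum is the claimed error $C_uC_v\frac{n^{1-2s}}{2s-1}[-1,1]$.

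The finite convolution $\sum_{i=1}^{k-1}v_iu_{k-i}$ is left as written; when $k-i>n$ or $i>n$ its entries are themselves intervals, so it is an interval expression. The only subtlety is that the statement uses the convention $u_{k+i},v_{i+k}$ for interval enclosures when $k+i>n$, which is consistent with this.

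\textbf{Case $k>2n$.} We bound $|(uv)_k|k^s$ term by term.
\begin{itemize}
\item The term $|u_0v_k|$ contributes at most $|u_0|C_v k^{-s}$.
\item For $\sum_i u_iv_{i+k}$, split at $i\le n$ and $i>n$. The part $i\le n$ gives $|u_i|C_v(i+k)^{-s}\le |u_i|C_vk^{-s}$. The part $i>n$ gives $C_uC_v i^{-s}k^{-s}$, whose sum is at most $C_uC_v\frac{n^{1-s}}{s-1}k^{-s}$ by Proposition \ref{pr:sumEstimation}.
\item The sum $\sum_i u_{i+k}v_i$ is treated symmetrically, giving $|v_i|C_uk^{-s}$ for $i\le n$ plus the same tail.
\item The convolution $\sum_{i=1}^{k-1}u_iv_{k-i}$ is where the main obstacle lies, since its number of terms grows with $k$.
\end{itemize}

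For the convolution, split the indices into $i\le n$, $k-i\le n$, and both $>n$. When $i\le n$ we use $(k-i)^{-s}$ together with
\begin{equation*}
\frac{k}{k-i}\le\frac{2n+1}{2n+1-i}\quad\text{for }k\ge 2n+1,
\end{equation*}
which holds because $k/(k-i)$ is decreasing in $k$. This gives $|u_i|C_v\left(\frac{2n+1}{2n+1-i}\right)^sk^{-s}$, and symmetrically $|v_i|C_u(\cdots)^s k^{-s}$ for $k-i\le n$.

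In the middle range, inequality \eqref{eq:inequality} gives $k^s\le 2^{s-1}(i^s+(k-i)^s)$. Hence
\begin{equation*}
\frac{k^s}{i^s(k-i)^s}\le 2^{s-1}\left(\frac{1}{(k-i)^s}+\frac{1}{i^s}\right).
\end{equation*}
Summing over $n<i<k-n$, and using Proposition \ref{pr:sumEstimation} for each of the two sums, we get at most $2^{s}\frac{n^{1-s}}{s-1}C_uC_v k^{-s}$.

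Collecting the terms with the factor $\frac{1}{2}$ gives exactly
\begin{equation*}
|u_0|C_v+\frac{1}{2}\left(\sum_{i\le n}(C_u|v_i|+C_v|u_i|)\left(1+\left(\frac{2n+1}{2n+1-i}\right)^s\right)+C_uC_v(2+2^s)\frac{n^{1-s}}{s-1}\right)=D,
\end{equation*}
which matches the proof of Lemma \ref{lem:sinTimesSin}.
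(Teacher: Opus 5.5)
Your proof is correct and takes essentially the paper's route. The paper defers this proof to a reference, but its proofs of Lemmas \ref{lem:costimesSinDiverging} and \ref{lem:costimesSinSlowConvering} use the same decomposition: the infinite sums are split at $i=n$, and the convolution is split into $i\le n$, $i\ge k-n$ and the middle range. Your monotonicity bound on $k/(k-i)$ and the convexity inequality \eqref{eq:inequality} in the middle range reproduce the constant $D$ exactly, including the factor $2+2^s$.
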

 The next two lemmas are crucial in rigorous integration of the variational equations, as they allow us to estimate the term $Df(u)h$ when $h$ belongs to an unbounded set.
Namely, they allow to estimate the Fourier coefficients of the product $(uv)$ uniformly with respect $v$ even if $v$ is in some unbounded set in the space $C_0.$ Hence, if $Df(u)$ can be written as the cosine series and $h$ as the sine series which  satisfy the assumptions of Lemmas \ref{lem:costimesSinDiverging} or \ref{lem:costimesSinSlowConvering}, then we get the estimate on $Df(u)h$ even if  $h$ belongs to an unbounded set. 
 
\begin{lemma}\label{lem:costimesSinDiverging}
Assume that $u,v\in L^2(0,\pi)$ are two functions whose expansions in the cosine and sine Fourier series are the following
\begin{equation}
u(x) = u_0 + \sum_{k=1}^\infty u_k\cos(kx), \quad v(x) = \sum_{k=1}^\infty v_k \sin(kx).
\end{equation}
Moreover, assume that for some
$n\in\mathbb{N}$ and some $s_1>1, s_2\leq 0$ such that $s_1+s_2>1$ the following bounds hold for the coefficients of the expansions
\begin{equation}
    u_k \in \frac{C_u[-1,1]}{k^{s_1}},\quad
    v_k \in \frac{C_v[-1,1]}{k^{s_2}}\quad\text{for $k>n$}.
\end{equation} 
Then, the following estimates on the Fourier coefficients of the product $(uv)$ in the sine series hold for $1\leq k\leq 2n$
\begin{align*}
    (uv)_{k}\in u_0v_k &+
  \frac{1}{2} \left(
    \sum_{i = 1}^n v_{i+k}u_{i} -
    \sum_{i=  1}^n v_{i}u_{k + i} +
    \sum_{i = 1 }^{k-1}v_{i}u_{k - i}
    \right)
    \\&+ \frac{C_uC_v}{2}\left(1+\left(\frac{n+1}{n+1+k}\right)^{s_2} \right)
\frac{n^{1-s_1-s_2}}{s_1+s_2-1}[-1,1],
\end{align*}
while  for $k>2n$ we have
\begin{align*}
    (uv)_k \in \frac{D[-1,1]}{k^{s_1}},
\end{align*}
with the constant $D$ given by the formula
\begin{align*}
    D &= |u_0|C_v+
    \frac{1}{2}\sum_{i=1}^n
    \left(\frac{2n+1}{2n+1+i}\right)^{s_2}\left(C_v|u_i| +
    C_u|v_i|\left(\frac{1}{2n+1+i}\right)^{s_1-s_2}\right)
    \\
    &+
    \frac{1}{2}\sum_{i=1}^n\left(C_v|u_i|+C_u|v_i|\left(\frac{1}{2n+1-i}\right)^{s_1-s_2}\right)
    \\
    &+
    C_uC_v\left(\frac{(2n+1)(n+1)}{3n+2}\right)^{s_2}\left(
    \frac{n^{1-s_1-s_2}}{s_1+s_2-1} 
    \right) + \frac{1}{2}C_vC_u\frac{n^{1-s_1}}{s_1-1}
    .
\end{align*}
\end{lemma}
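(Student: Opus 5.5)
The plan is to start from the exact formula of Lemma \ref{lem:formulasCosTimesSin},
\begin{equation*}
(uv)_k = u_0v_k +\tfrac{1}{2}\sum_{i=1}^\infty u_iv_{i+k}-\tfrac{1}{2}\sum_{i=1}^\infty u_{i+k}v_i+\tfrac{1}{2}\sum_{i=1}^{k-1} u_{i}v_{k-i},
\end{equation*}
and to split each sum into the part where both indices are at most $n$, which is kept explicitly, and the tail parts, which are bounded with $|u_j|\le C_u j^{-s_1}$ and $|v_j|\le C_v j^{-s_2}$ for $j>n$. Since $s_2\le 0$, the quantity $j^{-s_2}=j^{|s_2|}$ grows. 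Every tail estimate therefore has to pair a growing $v$-factor with a decaying $u$-factor. Convergence comes only from the hypothesis $s_1+s_2>1$, which makes the sums $\sum_{j>n} j^{-(s_1+s_2)}$ finite. These sums are bounded by Proposition \ref{pr:sumEstimation} as $n^{1-s_1-s_2}/(s_1+s_2-1)$.

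\emph{Case $1\le k\le 2n$.} Here $u_0v_k$ and the three finite sums over $i\le n$ (and over $i\le k-1$ in the convolution) are kept exactly. The remaining tails are $\sum_{i>n}u_iv_{i+k}$ and $\sum_{i>n}u_{i+k}v_i$.

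For the first tail I bound $|v_{i+k}|\le C_v(i+k)^{-s_2}$. Because $-s_2\ge 0$ and $i\ge n+1$, I use $(i+k)^{-s_2}=i^{-s_2}\bigl((i+k)/i\bigr)^{-s_2}\le i^{-s_2}\bigl((n+1+k)/(n+1)\bigr)^{-s_2}$. This produces the factor $\bigl(\tfrac{n+1}{n+1+k}\bigr)^{s_2}$ times $\sum_{i>n} i^{-s_1-s_2}$.

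For the second tail I use $(i+k)^{-s_1}\le i^{-s_1}$, which gives the factor $1$ times the same sum. Summing the two gives exactly the stated $[-1,1]$ error term.

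\emph{Case $k>2n$.} I multiply every term by $k^{s_1}$ and bound each piece by a constant that does not depend on $k$.

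For the sums $\sum_{i\le n} u_iv_{i+k}$, with $v_{i+k}$ in the tail, I would use $(i+k)^{-s_2}\le k^{-s_2}\bigl(\tfrac{2n+1+i}{2n+1}\bigr)^{-s_2}$. I would then try to trade the resulting $k^{-s_2}$ against $k^{s_1}$ using ratios such as $\bigl(\tfrac{1}{2n+1\pm i}\bigr)^{s_1-s_2}$, evaluated at the worst index $k=2n+1$; these ratios are the ones that appear in $D$.

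For $\sum_{i\le n}u_{i+k}v_i$, the bound $(k+i)^{-s_1}\le k^{-s_1}$ gives directly $C_u|v_i|$-type terms.

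The convolution $\sum_{i=1}^{k-1}u_iv_{k-i}$ I split into three ranges: $i\le n$, $k-i\le n$, and the middle range where both indices exceed $n$. In the middle range I use $k\le 2\max(i,k-i)$, which produces the $2^{s}$-type and $\bigl(\tfrac{(2n+1)(n+1)}{3n+2}\bigr)^{s_2}$ factors together with $n^{1-s_1-s_2}/(s_1+s_2-1)$. The remaining infinite tail $\sum_{i>n}u_{i+k}v_i$ gives the term $\tfrac12 C_uC_v n^{1-s_1}/(s_1-1)$-type contribution once $k^{s_1}$ is absorbed.

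\textbf{Main obstacle.} The hardest step is the $k>2n$ case, in particular the terms in which $v$ is evaluated at an index of order $k$. The worst of these is $u_0v_k$, since $k^{s_1}|u_0v_k|\le |u_0|C_v k^{s_1-s_2}$ grows in $k$ and so cannot be absorbed into $|u_0|C_v$ as written. My first attempt would be to check whether this term, and the $\sum_{i\le n}u_iv_{i+k}$ terms, must instead be controlled only at the rate $k^{-s_2}$. If so, I would verify the stated $k^{-s_1}$ form on the remaining terms and flag the $u_0v_k$ contribution explicitly. This is the step I expect to require the most care.
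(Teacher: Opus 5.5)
Your argument for $1\le k\le 2n$ is the same as the paper's. The gap is in the case $k>2n$.

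\textbf{The printed rate is wrong, and you saw why.} Your diagnosis of $u_0v_k$ is correct, and it shows that the rate $k^{s_1}$ in the statement is a misprint. Take $u\equiv u_0\neq 0$ and $v=C_vK^{-s_2}\sin(Kx)$ with $K>2n$. All hypotheses hold, yet $K^{s_1}|(uv)_K|=|u_0|C_vK^{s_1-s_2}\to\infty$, while $D$ does not depend on $K$. Every estimate in the paper's proof has the form $\mathrm{const}/k^{s_2}$. So the paper actually proves $(uv)_k\in D[-1,1]/k^{s_2}$, with exactly the displayed $D$. This is also how the lemma is used in Section~\ref{sect24}, where $(u^2h)_i\in[D^-,D^+]/i^{s_2}$.

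\textbf{Your fallback fails.} Keeping the rate $k^{-s_1}$ for the remaining terms and isolating $u_0v_k$ does not work. Every piece in which $v$ carries an index comparable to $k$ grows like $k^{-s_2}$:
\begin{itemize}
\item $\sum_{i\le n}u_iv_{i+k}$ and $\sum_{i\le n}u_iv_{k-i}$;
\item $\sum_{i>n}u_iv_{i+k}$;
\item the middle range, where for large $k$ the single term $u_{n+1}v_{k-n-1}$ may equal $C_uC_v(n+1)^{-s_1}(k-n-1)^{-s_2}$.
\end{itemize}
Even $\sum_{i>n}u_{i+k}v_i$ is in general of order $k^{1-s_1-s_2}$. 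Only $\sum_{i\le n}u_{i+k}v_i$ and $\sum_{i\le n}u_{k-i}v_i$ are $O(k^{-s_1})$.

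For the same reason, ``trading $k^{-s_2}$ against $k^{s_1}$'' is impossible. The ratios $(2n+1\pm i)^{-(s_1-s_2)}$ in $D$ arise the other way round. One downgrades the decay of $u$ to the rate $s_2$ via $(k\pm i)^{-s_1}=k^{-s_2}\bigl(\frac{k}{k\pm i}\bigr)^{s_2}(k\pm i)^{-(s_1-s_2)}$.

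\textbf{The missing step.} Factor $k^{-s_2}$ out of each piece and bound what remains by monotonicity. Use $k\ge 2n+1$ throughout, $i\ge n+1$ in the third inequality, and $1\le i\le n$ in the last:
\begin{align*}
&\Bigl(\tfrac{k}{k+i}\Bigr)^{s_2}\le\Bigl(\tfrac{2n+1}{2n+1+i}\Bigr)^{s_2},\qquad
k^{s_2}(k+i)^{-s_1}\le\Bigl(\tfrac{2n+1}{2n+1+i}\Bigr)^{s_2}(2n+1+i)^{-(s_1-s_2)},\\
&\Bigl(\tfrac{ki}{k+i}\Bigr)^{s_2}\le\Bigl(\tfrac{(2n+1)(n+1)}{3n+2}\Bigr)^{s_2},\qquad
\Bigl(\tfrac{k}{k-i}\Bigr)^{s_2}\le 1,\qquad
k^{s_2}(k-i)^{-s_1}\le(2n+1-i)^{-(s_1-s_2)}.
\end{align*}
\begin{itemize}
\item The third inequality handles both infinite tails. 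For example, $k^{s_2}|u_iv_{i+k}|\le C_uC_v\bigl(\frac{ki}{k+i}\bigr)^{s_2}i^{-s_1-s_2}$. This yields the term $C_uC_v\bigl(\frac{(2n+1)(n+1)}{3n+2}\bigr)^{s_2}\frac{n^{1-s_1-s_2}}{s_1+s_2-1}$.
\item The fourth inequality handles $\sum_{i\le n}u_iv_{k-i}$ and the middle range. The middle range gives $\frac12C_uC_v\frac{n^{1-s_1}}{s_1-1}$ directly from $\sum_{i>n}i^{-s_1}$.
\end{itemize}
No $2^{s}$ factor and no splitting $k\le 2\max(i,k-i)$ is needed; that device is for two decaying sequences, as in Lemma~\ref{lem:sinTimesSin}. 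You have the sources of these two terms of $D$ swapped. Summing all pieces with their factors $\frac12$ reproduces the stated $D$, with denominator $k^{s_2}$.
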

\begin{proof}
    For $1\leq k \leq 2n$ we write
\begin{equation*}
(uv)_k = u_0v_k +
\frac{1}{2}\sum_{i=1}^n u_iv_{i+k}
-\frac{1}{2}\sum_{i=1}^n u_{i+k}v_i
+\frac{1}{2}\sum_{i=1}^{k-1}  u_{i}v_{k-i}
+\frac{1}{2}\sum_{i=n+1}^\infty u_iv_{i+k} 
-\frac{1}{2}\sum_{i=n+1}^\infty u_{i+k}v_i
.
\end{equation*}
We estimate
\begin{equation*}
    |u_{i+k}v_i|\leq \frac{C_vC_u}{(i+k)^{s_1}i^{s_2}}\leq \frac{C_vC_u}{i^{s_1+s_2}},
\end{equation*}
\begin{equation*}
        |u_iv_{i+k}|\leq \frac{C_vC_u}{i^{s_1}(i+k)^{s_2}}=
        \frac{C_vC_u}{i^{s_1+s_2}}\left(1-\frac{k}{k+i}\right)^{s_2}
        \leq \frac{C_vC_u}{i^{s_1+s_2}}\left(1-\frac{k}{n+1+k}\right)^{s_2}.
\end{equation*}
The last inequality follows from fact that the sequence $\{(1-\frac{k}{k+i})^{s_2}\}_{k\geq 2n+1}$ is non-increasing with respect to $i.$ So with the use of Proposition \ref{eq:sumEstimation} we get the estimation for $1 \leq k\leq 2n.$
For $k>2n$ we write
\begin{align*}
    (uv)_k &= u_0v_k +
    \frac{1}{2}\sum_{i=1}^n u_iv_{i+k}
-\frac{1}{2}\sum_{i=1}^n u_{i+k}v_i
+\frac{1}{2}\sum_{i=n+1}^\infty u_iv_{i+k} -\frac{1}{2}\sum_{i=n+1}^\infty u_{i+k}v_i
\\&+\frac{1}{2}\sum_{i=1}^{n}  u_{i}v_{k-i} +
\frac{1}{2}\sum_{i=k-n}^{k-1}  u_{i}v_{k-i}
+ \frac{1}{2}\sum_{i=n+1}^{k-n-1}u_{i}v_{k-i},
\end{align*}
and estimate each sum separately.
We have
\begin{align*}
    \sum_{i=1}^{n} |u_{i}v_{i+k}|&
    \leq
     C_v\sum_{i=1}^n\frac{1}{(i+k)^{s_2}}|u_i|=
     \frac{C_v}{k^{s_2}}
    \sum_{i=1}^n\brakets{1-\frac{i}{k+i}}^{s_2}|u_i|
    \leq
    \frac{C_v}{k^{s_2}}
    \sum_{i=1}^n
    \brakets{1-\frac{i}{2n+1+i}}^{s_2}|u_i|,
    \\
    \sum_{i=1}^{n} |u_{i+k}v_i|
    &\leq
     C_u\sum_{i=1}^n\frac{1}{(i+k)^{s_1}}|v_i|
     \leq
     \frac{C_u}{k^{s_2}}\sum_{i=1}^n
     \brakets{1-\frac{i}{i+k}}^{s_2}\frac{1}{(i+k)^{s_1-s_2}}|v_i|
     \\&\leq
     \frac{C_u}{k^{s_2}}\sum_{i=1}^n\frac{1}{(i+2n+1)^{s_1-s_2}}
     \brakets{1-\frac{i}{2n+1+i}}^{s_2}|v_i|.
\end{align*}
In the above derivation we have used the fact that the sequence $\left\{\left(1-\frac{i}{k+i}\right)^{s_2}\right\}_{k\geq 2n+1}$ is non-increasing with respect to $k.$
Now we observe that for $s_2\leq 0,k\geq 2n+1,i\geq n+1 $ we have
\begin{equation*}
    \brakets{\frac{ki}{i+k}}^{s_2}\leq \brakets{\frac{(2n+1)(n+1)}{3n+2}}^{s_2}.
\end{equation*}
We estimate next two sums
\begin{align*}
   \sum_{i=n+1}^\infty |u_{i+k}v_{i}|&
    \leq 
    \frac{C_vC_u}{k^{s_2}}\sum_{i=n+1}^\infty\frac{k^{s_2}}{(i+k)^{s_1}i^{s_2}} 
    =
    \frac{C_vC_u}{k^{s_2}}
    \sum_{i=n+1}^\infty
    \brakets{\frac{ki}{i+k}}^{s_2} \frac{1}{(i+k)^{s_1-s_2}i^{2s_2}}
    \\&\leq
     \frac{C_vC_u}{k^{s_2}}
     \brakets{\frac{(n+1)(2n+1)}{3n+2}}^{s_2}
    \sum_{i=n+1}^\infty \frac{1}{i^{s_1+s_2}}
    \\&\leq
    \frac{C_vC_u}{k^{s_2}}
    \brakets{\frac{(n+1)(2n+1)}{3n+2}}^{s_2}
    \frac{n^{1-s_1-s_2}}{-1+s_1+s_2},
    \\
    \sum_{i=n+1}^\infty |u_iv_{i+k}|&\leq 
    \frac{C_vC_u}{ k^{s_2} } 
    \sum_{i=n+1}^\infty\frac{ k^{s_2}i^{s_2}}{i^{s_1+s_2} (i+k)^{s_2}}
    =
\frac{C_vC_u}{k^{s_2}} 
\sum_{i=n+1}^\infty\frac{1}{i^{s_1+s_2} }
\brakets{\frac{ki}{i+k}}^{s_2}\\
&\leq
\frac{C_vC_u}{k^{s_2}}
\brakets{\frac{(2n+1)(n+1)}{3n+2}}^{s_2} 
\frac{n^{1-s_1-s_2}}{-1+s_1+s_2}.
\end{align*}
For the next sums we compute
\begin{align*}
    \sum_{i=1}^{n}  |u_{i}v_{k-i}|
    &\leq
    \frac{C_v}{k^{s_2}}
    \sum_{i=1}^{n}|u_i|
    \brakets
    {\frac{k}{k-i}}^{s_2} 
    \leq
    \frac{C_v}{k^{s_2}}
    \sum_{i=1}^{n}|u_i|,
\\
    \sum_{i=k-n}^{k-1}  |u_{i}v_{k-i}| &= 
    \sum_{i=1}^{n}  |u_{k-i}v_{i}|\leq
    \frac{C_u}{k^{s_2}}
    \sum_{i=1}^{n} |v_i|\frac{k^{s_2}}{(k-i)^{s_1}}
    =
    \frac{C_u}{k^{s_2}}
    \sum_{i=1}^{n} |v_i|\frac{1}{(k-i)^{s_1-s_2}}\left(1+\frac{i}{k-i}\right)^{s_2}
    \\&\leq
    \frac{C_u}{k^{s_2}}
    \sum_{i=1}^{n} |v_i|\frac{1}{\brakets{2n+1-i}^{s_1-s_2}}.
\end{align*}
And the last one is estimated in the following way
\begin{align*}
\sum_{i=n+1}^{k-n-1}|u_{i}v_{k-i}|
\leq
\frac{C_uC_v}{k^{s_2}}
\sum_{i=n+1}^{k-n-1}\frac{1}{i^{s_1}}\brakets{\frac{k}{k-i}}^{s_2}
\leq
\frac{C_vC_u}{k^{s_2}}
\sum_{i=n+1}^{k-n-1}\frac{1}{i^{s_1}} \leq \frac{C_vC_u}{k^{s_2}}
\frac{n^{1-s_1}}{-1+s_1} 
,
\end{align*}
which ends the estimations and the proof.
\end{proof}

\begin{lemma}\label{lem:costimesSinSlowConvering}
Assume that $u,v\in L^2(0,\pi)$ are two functions whose expansions in the cosine and sine Fourier series are the following
\begin{equation}
u(x) = u_0 + \sum_{k=1}^\infty u_k\cos(kx), \quad v(x) = \sum_{k=1}^\infty v_k \sin(kx).
\end{equation}
Moreover, assume that for some
$n\in\mathbb{N}$ and some $s_1>1, s_2\in(0,1]$ such that $s_1-s_2>1$ the following bounds hold for the coefficients of the expansions
\begin{equation}
    u_k \in \frac{C_u[-1,1]}{k^{s_1}},\quad
    v_k \in \frac{C_v[-1,1]}{k^{s_2}}\quad\text{for $k>n$}.
\end{equation} 
Then, the following estimates on the Fourier coefficients of the product $(uv)$ in the sine series hold for $1\leq k\leq 2n$
\begin{align*}
    (uv)_{k}\in u_0v_k &+
  \frac{1}{2} \left(
    \sum_{i = 1}^n v_{i+k}u_{i} -
    \sum_{i=  1}^n v_{i}u_{k + i} +
    \sum_{i = 1 }^{k-1}v_{i}u_{k - i}
    \right)
    \\&+ C_uC_v\frac{n^{1-s_1-s_2}}{s_1+s_2-1}[-1,1],
\end{align*}
while  for $k>2n$ we have
\begin{align*}
    (uv)_k \in \frac{D[-1,1]}{k^{s_2}},
\end{align*}
where the constant $D$ is given by
\begin{align*}
    D &= |u_0|C_v+
    \frac{1}{2}\sum_{i=1}^n\left(C_v|u_i|+C_u|v_i|\frac{1}{(i+2n+1)^{s_1-s_2}}\right)
    \\
    &+ 
    \frac{1}{2}
    C_uC_v\frac{n^{1-s_1}}{-1+s_1} 
    + 
    \frac{1}{2}C_uC_v\frac{n^{1-s_1}}{-1+s_1}
    \\
    &+
    \frac{1}{2}\sum_{i=1}^n
    \left(C_v|u_i|+ C_u|v_i|\left(\frac{1}{2n+1-i}\right)^{s_1-s_2}\right)\left(1+\frac{i}{2n+1-i}\right)^{s_2}
    \\
    &+
    C_uC_v \left(\frac{2}{n+1}\right)^{s_2}\frac{n^{1+s_2-s_1}}{s_1-s_2-1}
    .
\end{align*}
\end{lemma}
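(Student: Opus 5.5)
The plan follows the proof of Lemma \ref{lem:costimesSinDiverging} closely. Since $s_2\in(0,1]$, the powers $k^{s_2}$ behave differently than when $s_2\le 0$, so the monotonicity arguments have to be reversed. I start from the exact formula of Lemma \ref{lem:formulasCosTimesSin}. For $1\le k\le 2n$ I split each infinite sum at $i=n$:
\begin{equation*}
(uv)_k = u_0v_k+\frac{1}{2}\sum_{i=1}^n u_iv_{i+k}-\frac{1}{2}\sum_{i=1}^n u_{i+k}v_i+\frac{1}{2}\sum_{i=1}^{k-1}u_iv_{k-i}+\frac{1}{2}\sum_{i=n+1}^\infty u_iv_{i+k}-\frac{1}{2}\sum_{i=n+1}^\infty u_{i+k}v_i .
\end{equation*}
The two tails are bounded crudely. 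Because $s_2>0$ we have $(i+k)^{-s_1}\le i^{-s_1}$ and $(i+k)^{-s_2}\le i^{-s_2}$, so each summand is at most $C_uC_v i^{-s_1-s_2}$. Proposition \ref{pr:sumEstimation} then bounds each tail by $\frac{C_uC_v}{2}\frac{n^{1-s_1-s_2}}{s_1+s_2-1}$, and the two together give the stated error term.

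For $k>2n$ I use the same eight-sum decomposition as in the proof of Lemma \ref{lem:costimesSinDiverging}. In every sum I factor out $k^{-s_2}$ and bound what remains uniformly in $k\ge 2n+1$.
\begin{itemize}
\item $|u_0v_k|\le |u_0|C_vk^{-s_2}$.
\item $\sum_{i\le n}|u_iv_{i+k}|\le C_vk^{-s_2}\sum|u_i|$, since $(i+k)^{-s_2}\le k^{-s_2}$.
\item $\sum_{i\le n}|u_{i+k}v_i|\le C_uk^{-s_2}\sum|v_i|(i+k)^{-(s_1-s_2)}\le C_uk^{-s_2}\sum |v_i|(i+2n+1)^{-(s_1-s_2)}$, since $s_1-s_2>1>0$.
\item The infinite tail $\sum_{i>n}|u_iv_{i+k}|$ is at most $C_uC_vk^{-s_2}\sum_{i>n}i^{-s_1}$.
\item Similarly $\sum_{i>n}|u_{i+k}v_i|\le C_uC_v\sum_{i>n} i^{-s_2}(i+k)^{-s_1}\le C_uC_vk^{-s_2}\sum_{i>n}i^{-s_1}$, using $(i+k)^{-s_1}\le k^{-s_2}i^{-(s_1-s_2)}$.
\end{itemize}
The last two items account for the two $\frac12 C_uC_v\frac{n^{1-s_1}}{s_1-1}$ terms.

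The finite convolution pieces are handled next. First,
\begin{equation*}
\sum_{i\le n}|u_iv_{k-i}|\le C_v k^{-s_2}\sum|u_i|\Big(\tfrac{k}{k-i}\Big)^{s_2}\le C_vk^{-s_2}\sum|u_i|\Big(1+\tfrac{i}{2n+1-i}\Big)^{s_2},
\end{equation*}
and here the factor is now larger than one because $s_2>0$. Second, $\sum_{i=k-n}^{k-1}|u_iv_{k-i}|=\sum_{i\le n}|u_{k-i}v_i|$, and $(k-i)^{-s_1}=k^{-s_2}(k-i)^{-(s_1-s_2)}(1+\frac{i}{k-i})^{s_2}$ is bounded at $k=2n+1$, since $\frac{i}{k-i}$ decreases in $k$.

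The main obstacle is the middle sum $\sum_{i=n+1}^{k-n-1}|u_iv_{k-i}|$, where both coefficients lie in the tail. My plan is to split it at $i=k/2$. For $i\ge k/2$ I use $i^{-s_1}\le (k/2)^{-s_2}i^{-(s_1-s_2)}$ and $(k-i)^{-s_2}\le (n+1)^{-s_2}$. Summing $i^{-(s_1-s_2)}$ over $i>n$ then gives $C_uC_v k^{-s_2}2^{s_2}(n+1)^{-s_2}\frac{n^{1-(s_1-s_2)}}{s_1-s_2-1}$. For $i<k/2$ we have $k-i\ge k/2$, so $(k-i)^{-s_2}\le 2^{s_2}k^{-s_2}$, and $\sum_{i>n} i^{-s_1}$ is bounded by the same expression, since $i^{-s_1}\le (n+1)^{-s_2}i^{-(s_1-s_2)}$. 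The two halves together give the factor $C_uC_v(\frac{2}{n+1})^{s_2}\frac{n^{1+s_2-s_1}}{s_1-s_2-1}$, with the $\frac12$ from the convolution absorbed. The hypothesis $s_1-s_2>1$ is used exactly here to make this sum converge. Collecting all the pieces yields $D$.
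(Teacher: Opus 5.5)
Your proposal is correct and follows the paper's proof essentially step by step: the same tail splitting for $1\le k\le 2n$, and the same eight-sum decomposition with the same bounds for $k>2n$. The one cosmetic difference is in the middle convolution sum, where the paper uses the identity $\frac{k}{i(k-i)}=\frac{1}{i}+\frac{1}{k-i}\le\frac{2}{n+1}$ instead of your case split at $i=k/2$; both give the same pointwise bound $C_uC_v\left(\frac{2}{n+1}\right)^{s_2}k^{-s_2}i^{-(s_1-s_2)}$.
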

\begin{proof}
       For $1\leq k \leq 2n$ we write
\begin{equation*}
(uv)_k = u_0v_k +
\frac{1}{2}\sum_{i=1}^n u_iv_{i+k}
-\frac{1}{2}\sum_{i=1}^n u_{i+k}v_i
+\frac{1}{2}\sum_{i=1}^{k-1}  u_{i}v_{k-i}
+\frac{1}{2}\sum_{i=n+1}^\infty u_iv_{i+k} 
-\frac{1}{2}\sum_{i=1}^\infty u_{i+k}v_i
.
\end{equation*}
    We start with observation  that for every $k\in\mathbb{N}$ we have
    \begin{equation*}
        |u_i v_{k+i}|\leq\frac{C_uC_v}{i^{s_1+s_2}}, 
        \quad 
        |u_{i+k} v_{i}|\leq\frac{C_uC_v}{i^{s_1+s_2}}.
    \end{equation*}
    These inequalities allow us to follow the same argument as in Lemma \ref{lem:sinTimesSin} to obtain estimates for coefficients $(uv)_k$ for $1\leq k\leq 2n$.
    For $k>2$ we write
\begin{align*}
    (uv)_k &= u_0v_k +
    \frac{1}{2}\sum_{i=1}^n u_iv_{i+k}
-\frac{1}{2}\sum_{i=1}^n u_{i+k}v_i
+\frac{1}{2}\sum_{i=n+1}^\infty u_iv_{i+k} -\frac{1}{2}\sum_{i=n+1}^\infty u_{i+k}v_i
\\&+\frac{1}{2}\sum_{i=1}^{n}  u_{i}v_{k-i} +
\frac{1}{2}\sum_{i=k-n}^{k-1}  u_{i}v_{k-i}
+ \frac{1}{2}\sum_{i=n+1}^{k-n-1}u_{i}v_{k-i},
\end{align*}
All terms except the last one are estimated in the same way as the sum in Lemma \ref{lem:formulasSinTimesSin}. Hence, we present only the results of these calculations which are given by the following formulas
\begin{align*}
     \sum_{i=1}^{n} |u_{i}v_{i+k}|&\leq
    \frac{C_v}{k^{s_2}}
    \sum_{i=1}^n|u_i|,
    \quad
    \sum_{i=1}^{n} |u_{i+k}v_{i}|\leq
    \frac{C_u}{k^{s_2}}
    \sum_{i=1}^n|v_i|\frac{1}{(i+2n+1)^{s_1-s_2}},
    \\
    \sum_{i=n+1}^{\infty} |u_{i}v_{i+k}|&\leq
    \frac{C_uC_v}{k^{s_2}}\frac{n^{1-s_1}}{-1+s_1},
    \quad
    \sum_{i=n+1}^{\infty} |u_{i+k}v_{i}|\leq
    \frac{C_u C_v}{k^{s_2}}
    \frac{n^{1-s_1}}{-1+s_1},
    \\
    \sum_{i=1}^{n}  |u_{i}v_{k-i}|&\leq
    \frac{C_v}{k^{s_2}}\sum_{i=1}^n
    |u_i|\brakets{1+\frac{i}{2n+1-i}}^{s_2},
    \quad 
    \\
    \sum_{i=k-n}^{k-1}|u_{i}v_{k-i}| &=\sum_{i=1}^{n}  |v_{i}u_{k-i}|  \leq
    \frac{C_u}{k^{s_2}}\sum_{i=1}^n
    |v_i|\brakets{\frac{1}{2n+1-i}}^{s_1-s_2}
    \brakets{1+\frac{i}{2n+1-i}}^{s_2}
    \\
    &=  
     \frac{C_u}{k^{s_2}}\sum_{i=1}^n
    |v_i|
\frac{(2n+1)^{s_2}}{(2n+1-i)^{s_1}}.
\end{align*}   
For the last sum we have
\begin{align*}
\sum_{i=n+1}^{k-n-1}|u_{i}v_{k-i}|
&\leq
C_uC_v
\sum_{i=n+1}^{k-n-1}\frac{1}{i^{s_1}}\frac{1}{(k-i)^{s_2}}
=
\frac{C_uC_v}{k^{s_2}}
\sum_{i=n+1}^{k-n-1}
\brakets{\frac{1}{i}+\frac{1}{k-i} }^{s_2}
\frac{1}{i^{s_1-s_2}}
\\&\leq
\frac{C_uC_v}{k^{s_2}}
\brakets{\frac{2}{n+1}}^{s_2}
\sum_{i=n+1}^{k-n-1}
\frac{1}{i^{s_1-s_2}}
\leq
\frac{C_uC_v}{k^{s_2}}
\brakets{\frac{2}{n+1}}^{s_2}
\frac{n^{1+s_2-s_1}}{s_1-s_2-1}.
\end{align*}
This shows the estimates from the assertion of the Lemma.
\end{proof}

The following simple lemmas are useful to implement operations on the infinite interval vectors. They can be used when working both with sine and cosine Fourier series with polynomial bounds.
\begin{lemma}
Assume that the sequence $\{u_i \}_{i=1}^\infty$ satisfies
\begin{equation*}
     u_i\in [u_i^-,u_i^+]\ \
     \text{ for }\ \ i\leq n \ \ \text{ and }\ \
     u_i \in \frac{[C_u^-,C_u^+]}{i^s}\ \
     \text{ for  }\ \ i>n .
\end{equation*}
If $k<n$, then
\begin{equation*}
    u_i \in [D_u^-,D_u^+]\quad\text{for } i>k,
\end{equation*}
where 
\begin{equation*}
    D_u^- =
    \min \{ u_{k+1}^-(k+1)^s,\ldots,u_{n}^-n^s ,C_u^- \}, \quad
    D_u^+ = \max \{ u_{k+1}^+(k+1)^s,\ldots,u_{n}^+n^s,C_u^+ \}.
\end{equation*}
\end{lemma}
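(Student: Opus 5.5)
The statement is a one-line bookkeeping fact: it converts the mixed representation (explicit intervals for $i\le n$, polynomial tail for $i>n$) into a uniform interval bound for the rescaled coefficients $i^s u_i$ when $i>k$. Note that the natural reading of the assertion, consistent with the formula for $D_u^\pm$, is $u_i\in \frac{[D_u^-,D_u^+]}{i^s}$ for $i>k$, i.e. $i^s u_i\in[D_u^-,D_u^+]$. I will prove it in that form, and the literal interval statement then follows by the same case split with the weights removed.

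The plan is to split the index range $i>k$ into the finite block $k<i\le n$ and the tail $i>n$, and check the bound separately on each. Since $D_u^-$ and $D_u^+$ are a minimum and a maximum over exactly the quantities arising from these two blocks, the claim follows once each block is shown to land in $[D_u^-,D_u^+]$.

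First, for $k<i\le n$ we have $u_i\in[u_i^-,u_i^+]$. Multiplying by the positive number $i^s$ preserves the order, so $i^s u_i\in[u_i^- i^s,\,u_i^+ i^s]$. By definition $D_u^-\le u_i^- i^s$ and $u_i^+ i^s\le D_u^+$, hence $i^s u_i\in[D_u^-,D_u^+]$.

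Second, for $i>n$ the hypothesis gives $u_i\in \frac{[C_u^-,C_u^+]}{i^s}$, i.e. $i^s u_i\in[C_u^-,C_u^+]\subset[D_u^-,D_u^+]$, again because $C_u^\pm$ enter the min and max.

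Combining the two blocks gives $u_i\in\frac{[D_u^-,D_u^+]}{i^s}$ for every $i>k$, which is the representation \eqref{eq:representationInfiniteVectorsChafeInfante} with $n$ replaced by $k$. There is no real obstacle. The only point needing care is that multiplication by $i^s>0$ is monotone, which is what lets each interval endpoint be rescaled independently; this step would fail only for signed or nonpositive weights, which do not occur here.
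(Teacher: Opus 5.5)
Your proof of the weighted reading, $i^s u_i\in[D_u^-,D_u^+]$ for $i>k$, is correct and complete. The paper states this lemma without proof as one of its ``simple lemmas'', and the missing factor $1/i^s$ is evidently a typo; the same factor is also missing in the next lemma. Your split into $k<i\le n$ and $i>n$ uses only $i^s>0$, which holds for every real $s$, and it is exactly the intended argument.

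What you need to fix is the side claim that the literal statement $u_i\in[D_u^-,D_u^+]$ ``follows by the same case split with the weights removed.'' It does not follow.
\begin{itemize}
\item Removing the weights would require $D_u^-\le u_i^-$. You only know $D_u^-\le i^s u_i^-$, which is weaker whenever $u_i^->0$.
\item For $s>0$ the tail values $C_u^\pm/i^s$ tend to $0$, so they eventually leave any interval with $D_u^->0$.
\end{itemize}
A concrete counterexample: take $s=2$, $n=2$, $k=1$, $u_2^-=u_2^+=1$, and $u_i=100/i^2$ for $i>2$. Then $D_u^-=4$ and $D_u^+=100$, yet $u_2=1$ and $u_{10}=1$ both lie outside $[4,100]$.

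The unweighted version holds only under extra hypotheses, for example $s\ge 0$ together with $D_u^-\le 0\le D_u^+$ (the argument then uses $i^s\ge 1$). So drop that sentence rather than assert it.
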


\begin{lemma}
Assume that sequence $\{u_i \}_{i=1}^\infty$ satisfies
\begin{equation*}
     u_i\in [u_i^-,u_i^+]\ \
     \text{ for }\ \ i\leq n\ \  \text{ and }  \ \
     u_i \in \frac{[C_u^-,C_u^+]}{i^s}\ \
     \text{ for  }\ \ i>k.
\end{equation*}
Then for $s_1<s$ there holds
\begin{equation*}
    u_i \in [D_u^-,D_u^+]\quad\text{for }i>n,
\end{equation*}
where,
\begin{equation*}
    D_u^- =
    \min \left\{ 0,\frac{C_u^-}{(n+1)^{s-s_1}} \right\}, \quad
    D_u^+ =
    \max \left\{0,\frac{C_u^+}{(n+1)^{s-s_1}} \right\}.
\end{equation*}
\end{lemma}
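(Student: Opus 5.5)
The plan is to read the conclusion as a bound with the weaker decay rate $s_1$, namely $u_i\in\frac{[D_u^-,D_u^+]}{i^{s_1}}$ for $i>n$; this is the form in which such bounds are used to re-express an infinite interval vector at a smaller exponent. I also read the hypothesis as holding for $i>n$. The argument is a direct pointwise computation on each coefficient and uses no earlier lemma.

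Fix $i>n$. By hypothesis we can write $u_i=\frac{c_i}{i^s}$ with $c_i\in[C_u^-,C_u^+]$. Then
\begin{equation*}
    u_i\, i^{s_1}=c_i\,\theta_i,\qquad \theta_i:=\frac{1}{i^{s-s_1}}.
\end{equation*}
Since $s-s_1>0$ and $i\geq n+1$, we have $\theta_i\in\bigl(0,(n+1)^{-(s-s_1)}\bigr]$.

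The key step is to bound the product $c_i\theta_i$ by treating the signs of the endpoints separately.
\begin{itemize}
    \item If $C_u^+\geq 0$, then $c_i\theta_i\leq C_u^+\theta_i\leq C_u^+(n+1)^{-(s-s_1)}$.
    \item If $C_u^+<0$, then $c_i\theta_i\leq C_u^+\theta_i<0$.
\end{itemize}
In both cases $c_i\theta_i\leq \max\{0,C_u^+(n+1)^{-(s-s_1)}\}=D_u^+$. The symmetric argument with $C_u^-$ gives $c_i\theta_i\geq D_u^-$. Dividing by $i^{s_1}$ then yields the claim.

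The only step needing care is this sign analysis. Because $\theta_i$ can be arbitrarily close to $0$, the value $0$ must be included in the enclosure whenever $C_u^-$ and $C_u^+$ have the same sign. This is exactly why $0$ appears inside the $\min$ and $\max$ defining $D_u^{\pm}$.
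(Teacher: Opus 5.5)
Your argument is correct. The paper lists this among its ``simple lemmas'' and gives no proof, and your coefficientwise check is exactly the routine verification intended: you write $u_i i^{s_1}=c_i\,i^{-(s-s_1)}$ with $c_i\in[C_u^-,C_u^+]$ and $i^{-(s-s_1)}\in(0,(n+1)^{-(s-s_1)}]$, then split on the signs of $C_u^\pm$.

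Your repairs of the typos are the right reading: the hypothesis should hold for $i>n$, and the conclusion should be $u_i\in\frac{[D_u^-,D_u^+]}{i^{s_1}}$. This is consistent with the factor $\frac{[0,1]}{(n+1)^{s_2-s_1}}$ in Lemma~\ref{lem:add}. Read literally, without the factor $i^{-s_1}$, the conclusion fails for $s_1<0$: take $s=2$, $s_1=-1$, $n=1$, $u_i=i^{-2}$, so $u_2=\tfrac14>\tfrac18=D_u^+$.
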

\begin{lemma}\label{lem:add}
Assume that sequences $\{u_i \}_{i=1}^\infty$ and $\{v_i \}_{i=1}^\infty$ satisfy

\begin{equation}
    u_i \in \frac{[C_u^-,C_u^+]}{i^{s_1}},\quad
    v_i \in \frac{[C_v^-,C_v^+]}{i^{s_2}}\quad\text{for $i>n$},
\end{equation}
with the constants $s_1,s_2.$ Then
\begin{itemize}
    \item if $s_1=s_2$ then for $i>n$ we have $u_i+v_i\in
    \frac{[C_v^- + C_v^-,C_u^+ + C_v^+]}{i^s},$
    \item if $s_1 <s_2$ then for $i>n$ we have
    $u_i+v_i\in
    \frac{\left[C_u^-,C_u^+\right] + \frac{[0,1]}{(n+1)^{s_2-s_1}}\left[C_v^-,C_v^+\right] }{i^s_1},$
    \item if $s_1 >s_2$ then for $i>n$ we have
    $u_i+v_i\in
    \frac{\frac{[0,1]}{(n+1)^{s_1-s_2}}[C_u^-,C_u^+] + [C_v^-,C_v^+] }{i^{s_2}}.$
\end{itemize}
\end{lemma}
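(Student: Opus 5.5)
The statement to prove is the last lemma (Lemma \ref{lem:add}). It concerns the sum of two sequences with polynomial bounds of possibly different rates. The argument is a direct coefficientwise estimate, split into the three cases of the assertion. In each case I fix $i>n$ and write $u_i=a/i^{s_1}$ and $v_i=b/i^{s_2}$ with $a\in[C_u^-,C_u^+]$ and $b\in[C_v^-,C_v^+]$. This representation is exactly what the hypotheses give, coefficient by coefficient.

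\emph{Case $s_1=s_2=s$.} Here $u_i+v_i=(a+b)/i^{s}$, and $a+b\in[C_u^-+C_v^-,\,C_u^++C_v^+]$ by interval addition. This is the claimed enclosure; I read the lower endpoint in the statement as $C_u^-+C_v^-$, since the printed $C_v^-+C_v^-$ is evidently a typo.

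\emph{Case $s_1<s_2$.} I factor out the slower rate:
\begin{equation*}
u_i+v_i=\frac{1}{i^{s_1}}\left(a+\frac{b}{i^{s_2-s_1}}\right).
\end{equation*}
Since $i\geq n+1$ and $s_2-s_1>0$, the factor $\theta_i:=i^{-(s_2-s_1)}$ lies in $(0,(n+1)^{-(s_2-s_1)}]$. In particular $\theta_i\in\frac{[0,1]}{(n+1)^{s_2-s_1}}$, and so $\theta_i b$ lies in the interval product $\frac{[0,1]}{(n+1)^{s_2-s_1}}[C_v^-,C_v^+]$. This interval product automatically contains $0$ and both signed endpoint products, so it covers $\theta_i b$ whatever the signs of $C_v^\pm$. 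Adding $a\in[C_u^-,C_u^+]$ gives the claimed enclosure with rate $s_1$.

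\emph{Case $s_1>s_2$.} This is symmetric. I factor out $i^{-s_2}$ and use $i^{-(s_1-s_2)}\in\frac{[0,1]}{(n+1)^{s_1-s_2}}$ to absorb $a$.

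There is no real obstacle here. The only point needing care is the sign issue in the scaling step. The factor $\theta_i$ is merely known to lie in $[0,(n+1)^{-(s_2-s_1)}]$, so the bound must be an interval product rather than a simple rescaling of the endpoints. Because that interval product contains $0$, it stays valid uniformly in $i$ even when $C_v^-<0<C_v^+$ or both endpoints share a sign. I will state this explicitly and then conclude.
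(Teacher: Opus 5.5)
Your argument is correct. The paper gives no proof here; it lists this among the ``simple lemmas'' for interval-vector operations. Writing $u_i=a/i^{s_1}$, $v_i=b/i^{s_2}$ and absorbing $i^{-|s_1-s_2|}\in\frac{[0,1]}{(n+1)^{|s_1-s_2|}}$ into a sign-safe interval product is exactly the intended direct verification. You are also right to read $C_v^-+C_v^-$ as $C_u^-+C_v^-$, and $i^{s}$, $i^s_1$ as $i^{s_1}$.
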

\begin{lemma}\label{lem:mult}
Assume that sequences $\{u_i \}_{i=1}^\infty$ and $\{v_i \}_{i=1}^\infty$ satisfy

\begin{equation}
    u_i \in \frac{[C_u^-,C_u^+]}{i^{s_1}}\quad \textrm{and}\quad
    v_i \in \frac{[C_v^-,C_v^+]}{i^{s_2}}\quad\text{for $i>n$},
\end{equation}
with some constants $s_1,s_2.$
Then for $i>n$ we have
$u_iv_i\in \frac{[C_u^-+C_v^-,C_u^+ + C_v^+]}{i^{s_1+s_2}}.$
\end{lemma}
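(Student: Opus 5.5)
The plan is to reduce the statement to interval arithmetic on the normalized coefficients. Fix $i>n$. By assumption there exist numbers $a_i\in[C_u^-,C_u^+]$ and $b_i\in[C_v^-,C_v^+]$ with $u_i=a_i/i^{s_1}$ and $v_i=b_i/i^{s_2}$; concretely $a_i:=i^{s_1}u_i$ and $b_i:=i^{s_2}v_i$. Then
\begin{equation*}
u_iv_i=\frac{a_ib_i}{i^{s_1+s_2}},
\end{equation*}
so the whole statement comes down to locating the single real number $a_ib_i$ in an interval that does not depend on $i$.

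For this I would use the elementary fact that the product of two points of compact intervals lies in the interval product
\begin{equation*}
[C_u^-,C_u^+]\,[C_v^-,C_v^+]=\Big[\min\{C_u^\pm C_v^\pm\},\ \max\{C_u^\pm C_v^\pm\}\Big],
\end{equation*}
where the minimum and maximum run over the four products of endpoints. This holds because the map $(a,b)\mapsto ab$ is bilinear, so on the rectangle $[C_u^-,C_u^+]\times[C_v^-,C_v^+]$ it attains its extrema at the corners. Dividing by $i^{s_1+s_2}>0$ then gives
\begin{equation*}
u_iv_i\in\frac{[C_u^-,C_u^+][C_v^-,C_v^+]}{i^{s_1+s_2}}\quad\text{for all } i>n,
\end{equation*}
which is the enclosure actually needed for the operation $V*W$ on infinite interval vectors, and the one used when multiplying by $-b(t)$ or by the eigenvalues of $L$.

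The one point needing care is the form of the numerator in the displayed conclusion. The interval $[C_u^-+C_v^-,\,C_u^++C_v^+]$ is a sum, not a product, and it does not enclose $u_iv_i$ in general. For example, with $C_u^\pm=C_v^\pm=3$ one has $u_iv_i=9/i^{s_1+s_2}$, while the sum interval is $[6,6]$. I would therefore read the conclusion as the interval product $[C_u^-,C_u^+][C_v^-,C_v^+]$, consistent with how the lemma is invoked in Section \ref{sect24}, and prove it in that form. The sum is the correct rule for the exponent, $s_1+s_2$, not for the constants.

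Beyond fixing that reading there is no real obstacle. No summation or decay estimate enters, unlike Lemmas \ref{lem:costimesSinDiverging} and \ref{lem:costimesSinSlowConvering}. In an implementation the only extra step is computing the four endpoint products with outward rounding.
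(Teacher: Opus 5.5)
Your proof is correct, and so is your diagnosis: as literally stated the conclusion fails (your example $C_u^\pm=C_v^\pm=3$ gives $u_iv_i=9/i^{s_1+s_2}$, which lies outside $[6,6]/i^{s_1+s_2}$), and the numerator $[C_u^-+C_v^-,C_u^++C_v^+]$ appears to be a slip copied from Lemma~\ref{lem:add}, whose first bullet contains a similar typo. The paper states this lemma without proof. The interval-product version you establish, via the corner argument for the bilinear map $(a,b)\mapsto ab$, is exactly the enclosure the algorithm requires, for example when the bound on $u^3$ is multiplied by the interval enclosing $-b(t)$ in Section~\ref{sect24}, where neither the sum of constants nor an endpoint-wise product of intervals would be valid.
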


\subsection{\texorpdfstring{$\mathcal{L}(C_0(0,\pi))$}{Lg} norm estimates}
The following lemmas allow us to estimate the norm of a linear operators defined on the space $C_0(0,\pi).$
\begin{lemma}\label{lem:C0operatorNormEstimation}
Assume that $A$ is a linear and bounded operator $A:C_0(0,\pi)\to C_0(0,\pi)$ such that for some $n\geq 1$
\begin{equation*}
    A\sin(ix)\in W_i,\quad AV_{rest}\subset R,\quad   \text{ for i}\in\{1,\dots,n\}, 
\end{equation*}
where 
\begin{equation*}
    V_{rest}:= \left\{v\in C_0(0,\pi):\; v_i = 0 \text{ for }i\in\{1,\ldots,n\}, \; v_i\in[-1,1] \text{ for }i\in \mathbb{N}\setminus\{1,\ldots,n\} \right\}.
\end{equation*}
Then we have
\begin{equation*}
    \norm{A}_{\mathcal{L}{(C_0(0,\pi))}}\leq 2\left(\sum_{i=1}^{n}\sup_{x\in W_i}\norm{x}_{C_0(0,\pi)}+ \sup_{x\in R}\norm{x}_{C_0(0,\pi)} \right).
\end{equation*}
\end{lemma}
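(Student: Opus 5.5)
The plan is to decompose an arbitrary $v\in C_0(0,\pi)$ with $\norm{v}_{C_0(0,\pi)}\leq 1$ into a finite part $\sum_{i=1}^n v_i\sin(ix)$ and a remainder $r=v-\sum_{i=1}^n v_i\sin(ix)$, so that $r_i=0$ for $i\leq n$. By linearity,
\begin{equation*}
Av=\sum_{i=1}^n v_i A\sin(ix)+Ar.
\end{equation*}
The triangle inequality then gives
\begin{equation*}
\norm{Av}_{C_0(0,\pi)}\leq\sum_{i=1}^n|v_i|\sup_{x\in W_i}\norm{x}_{C_0(0,\pi)}+\norm{Ar}_{C_0(0,\pi)}.
\end{equation*}
The first step is to bound the Fourier coefficients uniformly. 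Since $v_i=\frac{2}{\pi}\int_0^\pi v(x)\sin(ix)\,dx$, we have $|v_i|\leq\frac{2}{\pi}\int_0^\pi|\sin(ix)|\,dx\cdot\norm{v}_{C_0(0,\pi)}=\frac{4}{\pi}\norm{v}_{C_0(0,\pi)}\leq 2\norm{v}_{C_0(0,\pi)}$. This already yields the factor $2$ in front of the finite sum.

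The second step handles the remainder. All Fourier coefficients of $r$ satisfy $|r_i|\leq 2$: for $i>n$ they coincide with $v_i$, and for $i\le n$ they vanish. Hence $r/2\in V_{rest}$, provided we check that $r/2\in C_0(0,\pi)$. This holds because $r$ is $v$ minus a finite trigonometric polynomial vanishing at $0$ and $\pi$. Consequently $Ar=2A(r/2)\in 2R$, so $\norm{Ar}_{C_0(0,\pi)}\leq 2\sup_{x\in R}\norm{x}_{C_0(0,\pi)}$. Combining the two steps and taking the supremum over $\norm{v}_{C_0(0,\pi)}\leq1$ gives the claimed bound on $\norm{A}_{\mathcal{L}(C_0(0,\pi))}$.

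The main point needing care is the membership $r/2\in V_{rest}$. The set $V_{rest}$ is defined only through coefficient bounds and is unbounded in $C_0(0,\pi)$, so one must confirm that the uniform coefficient bound $|v_i|\le \frac{4}{\pi}\norm{v}_{C_0(0,\pi)}$ is the only property required. It is, since $V_{rest}$ imposes exactly $v_i\in[-1,1]$ for $i>n$ and $v_i=0$ for $i\leq n$. Beyond that, the argument reduces to the elementary coefficient estimate and linearity, and I expect no genuine obstacle.
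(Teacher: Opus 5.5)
Your proposal is correct and matches the paper's proof: you bound $|v_i|\leq 2$ via the integral formula for the sine coefficients, split off the first $n$ modes, note that the remainder lies in $2V_{rest}$, and conclude by linearity and the triangle inequality. Your version is slightly cleaner, since you subtract all $n$ modes (the paper's sum up to $n-1$ is a typo) and you note the sharper constant $\frac{4}{\pi}$.
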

\begin{proof}
We take the $v\in C_0(0,\pi)$ with $\norm{v}_{C_0(0,\pi)}\leq 1.$ Then the Fourier coefficient are given by the equation $v_i =\frac{2}{\pi} \int_0^\pi\sin(ix)v(x)dx.$
We estimate
\begin{equation*}
    |v_i|\leq \frac{2}{\pi} \int_0^\pi |\sin(ix)v(x) |\leq 2. 
\end{equation*}
We write 
\begin{equation*}
    v = \sum_{i=1}^{n-1}v_i\sin(ix) + v_{rest}.     
\end{equation*}
We observe that $v_{rest}\in 2V_{rest}.$ From the linearity of $A$ and the triangle inequality we get the assertion.
\end{proof}
\begin{lemma}\label{lem:C0NormEstimation}
Assume that $v\in C_0(0,\pi)$. Moreover, let 
\begin{equation*}
    |v_i|\leq \frac{C}{|i|^s} \text{ for $i\geq n+1$ },
\end{equation*}
for some $n\in\mathbb{N}, C>0,s>1$.  
 Then we have 
\begin{equation*}
    \norm{v}_{C_0(0,\pi)}\leq \sum_{i=1}^{n}|v_i| + C\frac{  n^{-s+1}}{s-1}
\end{equation*}
\end{lemma}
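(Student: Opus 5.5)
The plan is to bound the sine series of $v$ directly in the supremum norm. Fix $x\in[0,\pi]$. The key observation is that $|\sin(ix)|\le 1$ for every $i$. Together with $s>1$, this means the series $\sum_{i\ge1}|v_i|$ converges, because the tail is dominated by $C\sum_{i>n} i^{-s}<\infty$.

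Consequently the Fourier sine series $\sum_{i=1}^\infty v_i\sin(ix)$ converges absolutely and uniformly on $[0,\pi]$ to some continuous function $w$. Its sine coefficients coincide with those of $v$, since uniform convergence lets us integrate term by term against $\sin(kx)$. The system $\{\sin(kx)\}$ is complete in $L^2(0,\pi)$, so $v=w$ almost everywhere. Both functions are continuous, so in fact $v=w$ pointwise on $[0,\pi]$. This identification is the only point requiring care: it replaces the unknown convergence of the Fourier series of a general $C_0$ function with the absolute convergence guaranteed by the decay assumption.

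With $v(x)=\sum_i v_i\sin(ix)$ established pointwise, the triangle inequality gives
\[
|v(x)|\le \sum_{i=1}^n|v_i| + \sum_{i=n+1}^\infty \frac{C}{i^s}.
\]
Applying Proposition \ref{pr:sumEstimation} to the tail bounds it by $C\frac{n^{1-s}}{s-1}$. Taking the supremum over $x\in[0,\pi]$ then yields the asserted estimate on $\norm{v}_{C_0(0,\pi)}$.
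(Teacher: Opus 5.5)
Your proof is correct and follows the same route as the paper: use the decay assumption with $s>1$ to get uniform convergence of the sine series, then apply the triangle inequality and Proposition \ref{pr:sumEstimation} to the tail. The one addition is that you justify why the uniform limit equals $v$ (term-by-term integration, completeness of $\{\sin(kx)\}$ in $L^2(0,\pi)$, continuity), whereas the paper only asserts that the Fourier series converges to $v$.
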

\begin{proof}
Firstly, let us observe that the Fourier series of $v$ converges in $C_0(0,\pi).$ Indeed, for every $k_1,k_2\in\mathbb{N},$ such that $k_1+1\leq k_2$ we have
\begin{equation*}
    \norm{\sum_{i=k_1+1}^{k_2} v_i\sin(x)}_{C_0(0,\pi)}\leq  \sum_{i=k_1+1}^{\infty }\frac{C}{|i|^s} \leq C\frac{  k_2^{-s+1}}{s-1},
\end{equation*}
so the Fourier series is converging to $v$. We estimate 
\begin{equation*}
    \norm{v}_{C_0(0,\pi)} \leq 
    \sum_{i=1}^\infty |v_i|\norm{\sin(ix)}_{C_0(0,\pi)}\leq \sum_{i=1}^{n}|v_i| + C\frac{  n^{-s+1}}{s-1},
\end{equation*}
which ends the proof.
\end{proof}
\subsection{Estimation of \texorpdfstring{$\mathcal{L}(H^2_\text{odd}(-\pi,\pi))$}{L(H2odd(-pi,pi))} norm} The next two lemmas allow us to estimate the norm of linear operators defined in the space \texorpdfstring{$H^2_{odd}(-\pi,\pi)$}{H2odd(-pi,pi)}, which is used in the results for the Burgers equation.
\begin{lemma}\label{le:BurgersNormEstimate1}
   Assume that $A:H^2_\text{odd}(-\pi,\pi)\to H^2_\text{odd}(-\pi,\pi)$ is a linear and bounded operator, that is, $A\in \mathcal{L}(H^2_\text{odd}(-\pi,\pi)),$ that satisfies
   $AV\subset R,$
   where $V =\{u\in H^2_\text{odd}(-\pi,\pi): u_k\in\frac{[-1,1]}{k^2} \},$ and $R\subset H^2_{per}(-\pi,\pi)$ is the rigorously found bound for $AV$.
   Then  
   \begin{equation}
       \norm{A}_{\mathcal{L}(H^2_\text{odd}(-\pi,\pi))}\leq \sup_{u\in R} \sqrt{ \sum_{k=1}^\infty|u_k|^2k^4}.
   \end{equation}
\end{lemma}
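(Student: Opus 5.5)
The plan is to show that the set $V$ contains a ball of $H^2_\text{odd}(-\pi,\pi)$, and then use linearity of $A$. Here the norm is $\norm{u}_Y=\norm{u_{xx}}_{L^2(-\pi,\pi)}$. By Parseval for the sine series on $(-\pi,\pi)$, $\norm{u}_Y^2=\pi\sum_{k=1}^\infty k^4|u_k|^2$.

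\textbf{Step 1: a ball inside $V$.} Take $u$ with $\norm{u}_Y\leq \sqrt{\pi}$. Then each term satisfies $\pi k^4|u_k|^2\leq \pi$, so $|u_k|\leq k^{-2}$ and hence $u\in V$. So $B_Y(0,\sqrt{\pi})\subset V$, up to the boundary, which is handled by closedness or by scaling by $1-\epsilon$. The earlier text claims the weaker radius $\frac1\pi$; that radius also suffices for containment.

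\textbf{Step 2: the bound on $A$.} For any $h$ with $\norm{h}_Y=1$, the element $\sqrt{\pi}\,h$ lies in $V$. Hence $A(\sqrt{\pi}h)\in R$, and therefore
\[
\norm{Ah}_Y=\frac{1}{\sqrt{\pi}}\norm{A(\sqrt\pi h)}_Y\leq \frac{1}{\sqrt\pi}\sup_{w\in R}\sqrt{\pi\sum_{k}k^4|w_k|^2}=\sup_{w\in R}\sqrt{\sum_k k^4|w_k|^2}.
\]
Taking the supremum over such $h$ gives the assertion.

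The only delicate point is matching the normalization constants correctly: Parseval for $\sin(kx)$ on $(-\pi,\pi)$ gives the factor $\pi$, and this factor must cancel exactly with the radius of the inscribed ball. Here it does, since the radius is $\sqrt\pi$ and Parseval supplies $\sqrt{\pi}$. If one used the cruder radius $\frac1\pi$ instead, an extra constant would appear. So I would verify the Parseval constant carefully and use the sharp radius $\sqrt\pi$.
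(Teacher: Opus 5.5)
Your proof is correct and is essentially the paper's argument. The paper takes $\norm{u_{xx}}_{L^2}=1$ and uses Parseval to get $|u_k|\leq \frac{1}{\sqrt{\pi}k^2}$, i.e.\ $u\in\frac{1}{\sqrt{\pi}}V$; the $\sqrt{\pi}$ from Parseval then cancels against this scaling via linearity of $A$, exactly as in your Step 2, and your remark that the radius must be $\sqrt{\pi}$ (not the $\frac{1}{\pi}$ quoted in the main text) to get exactly the stated constant is accurate.
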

\begin{proof}
    Assume that $u\in H^2_\text{odd}(-\pi,\pi)$ with $\norm{u_{xx}}_{L^2}=1.$ Then we have $|u_k|\leq \frac{1}{\sqrt{\pi}k^2}$. This means that $u\in\sqrt{\frac{1}{\pi}} V$ and $Au \in\sqrt{\frac{1}{\pi}}R.$ Then we have 
    \begin{equation*}
        \norm{Au}\leq\sup_{u\in R} \norm{\frac{1}{\sqrt{\pi}}u}_{ H^2_\text{odd}(-\pi,\pi)} = \frac{1}{\sqrt{\pi}}\sup_{u\in R}\sqrt{{\pi}} \sqrt{\sum_{k=1}^\infty|u_k|^2k^4} = a,
    \end{equation*}
which ends the proof.
\end{proof}
The following Lemma is used to rigorously compute the $H^2_\text{odd}(-\pi,\pi)$ norm. For proof, we only need to use Proposition \ref{pr:sumEstimation}. 
\begin{lemma}\label{le:BurgersNormEstimate2}
    Assume that 
    \begin{equation*}
        u(x) = \sum_{k=1}^\infty u_k\sin(kx).
    \end{equation*}
    which satisfies  $u_k \in \frac{C[-1,1]}{k^{s}}$  for every $k>n,$ with $s>\frac{5}{2}$. Then we have
    \begin{equation*}
        \sum_{k=1}^\infty|u_k|^2k^4\in \sum_{k=1}^n|u_k|^2k^4 +C^2\frac{n^{1-2(s-2)}}{2(s-2)-1}[0,1].
    \end{equation*}
\end{lemma}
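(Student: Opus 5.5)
Since the norm on $Y=H^2_\text{odd}(-\pi,\pi)$ is $\norm{u_{xx}}_{L^2(-\pi,\pi)}$ and $\norm{\sin(kx)}^2_{L^2(-\pi,\pi)}=\pi$, Parseval gives
\begin{equation*}
\norm{u}_Y^2=\pi\sum_{k=1}^\infty k^4|u_k|^2 .
\end{equation*}
So the statement concerns only the sum $S:=\sum_{k}|u_k|^2k^4$. The plan is to split it at index $n$, handle the finite head exactly, and bound the tail with the polynomial decay hypothesis and Proposition \ref{pr:sumEstimation}.

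\emph{Head.} The part $\sum_{k=1}^n|u_k|^2k^4$ is a finite sum. It appears unchanged on the right-hand side of the claimed inclusion, so nothing needs to be estimated there. In interval arithmetic it is evaluated directly from the explicitly represented modes.

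\emph{Tail.} For $k>n$ we have $|u_k|\le C/k^s$, hence
\begin{equation*}
0\le |u_k|^2k^4\le \frac{C^2}{k^{2s-4}}=\frac{C^2}{k^{2(s-2)}} .
\end{equation*}
The exponent $2(s-2)$ exceeds $1$ exactly when $s>\tfrac52$, which is where that hypothesis is used. Proposition \ref{pr:sumEstimation} with exponent $2(s-2)$ then gives
\begin{equation*}
0\le\sum_{k=n+1}^\infty|u_k|^2k^4\le C^2\frac{n^{1-2(s-2)}}{2(s-2)-1}.
\end{equation*}
Thus the tail lies in $C^2\frac{n^{1-2(s-2)}}{2(s-2)-1}[0,1]$, and adding the head gives the assertion. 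The same computation also shows $S<\infty$, so $u\in H^2_\text{odd}$ and the series defining $S$ converges.

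There is no real obstacle. The only points needing care are that the exponent after squaring and multiplying by $k^4$ is $2s-4$, and that the lower end of the interval is $0$, because every term is nonnegative.
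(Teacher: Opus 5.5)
Your proposal is correct and matches the paper's intended proof: the paper states only that Proposition~\ref{pr:sumEstimation} is needed, and you apply it to the tail bound $|u_k|^2k^4\le C^2k^{-2(s-2)}$ with exponent $2(s-2)>1$, and you correctly read off the lower endpoint $0$ from nonnegativity. Nothing is missing.
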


\printbibliography 

@article{ZPKuramotoII,
author = {P. Zgliczy\'{n}ski},
year = {2004},
pages = {157--185},
title = {Rigorous numerics for dissipative Partial Differential Equations II. Periodic orbit for the {K}uramoto--{S}ivashinsky PDE - a computer-assisted proof},
volume = {4},
journal = {Foundations of Computational Mathematics},
}

@article{ZPKuramotoIII,
author = {P. Zgliczy\'{n}ski},
year = {2010},
pages = {197--262},
volume={36},
journal={Topological Methods in Nonlinear Analysis},
title = {Rigorous Numerics for Dissipative PDEs III. An effective algorithm for rigorous integration of dissipative PDEs}
}

@article{ControlKapelaZgliczynski,
author = {T. Kapela and P. Zgliczy\'{n}ski},
year = {2007},
pages = {365--385},
title = {A {L}ohner-type algorithm for control systems and ordinary differential inclusions},
volume = {11},
journal = {Discrete and Continuous Dynamical Systems - Series B},
}

@article{ArioliBrusselator,
author = {G. Arioli},
year = {2021},
pages = {106079},
title = {Computer assisted proof of branches of stationary and periodic solutions, and {H}opf bifurcations, for dissipative PDEs},
volume = {105},
journal = {Communications in Nonlinear Science and Numerical Simulation},
}

@article{ChafeeInfanteOryginalny,
author = {N. Chafee and E. Infante},
title = {A bifurcation problem for a nonlinear partial differential equation of parabolic type},
journal = {Applicable Analysis},
volume = {4},
number = {1},
pages = {17-37},
year  = {1974},
publisher = {Taylor \& Francis},
}

@article{ChafeeInfanteLangaCarvalho,
author = {A. Carvalho and J. Langa and J. Robinson},
year = {2012},
month = {07},
pages = {},
title = {Structure and bifurcation of pullback attractors in a non-autonomous Chafee-Infante equation},
volume = {140},
journal = {Proceedings of the American Mathematical Society},
}

@article{ZgliczuCyrankaC1,
author = {J. Cyranka and P. Zgliczyński},
year = {2014},
month = {03},
pages = {},
title = {Existence of Globally Attracting Solutions for One-Dimensional Viscous Burgers Equation with Nonautonomous Forcing---A Computer Assisted Proof},
volume = {14},
journal = {SIAM Journal on Applied Dynamical Systems},
}

@article{ZgliczuKrawczyk,
author = {Z. Galias and P. Zgliczyński},
year = {2007},
month = {12},
pages = {4261-4272},
title = {Infinite Dimensional Krawczyk Operator for Finding Periodic orbits of Discrete Dynamical Systems.},
volume = {17},
journal = {I. J. Bifurcation and Chaos},
}

@book{ChepyzhovVishik,
author={V. Chepyzhov and M. Vishik},
title={Attractors for Equations of Mathematical Physics},
year = {2002},
publisher={American Mathematical Society},
place={Providence},
series={Colloquium
Publications},
volume={49},
}

@book{LangaCarvalhoBook,
author = {A. Carvalho and J. Langa and J. Robinson},
title = {Attractors for infinite-dimensional non-autonomous dynamical systems},
year = {2013},
month = {01},
pages = {},
publisher={Applied Mathematical Sciences},
volume = {182},
isbn = {978-1-4614-4580-7},
}

@article{MatanoLapNumber,
author = {H. Matano},
year = {1982},
month = {01},
pages = {401 - 441},
title = {Nonincrease of the lap-number of a solution for a one-dimensional semilinear parabolic equation},
volume = {29},
journal = {Journal of the Faculty of Science, University of Tokyo}
}

@article{FiedlerLapNumber,
author = {P. Brunovský and B. Fiedler},
title = {Numbers of zeros on invariant manifolds in reaction- diffusion equations},
journal = {Nonlinear Analysis: Theory, Methods \& Applications},
volume = {10},
number = {2},
pages = {179-193},
year = {1986},
issn = {0362-546X}
}

@book{hale1988asymptotic,
  title={Asymptotic Behavior of Dissipative Systems},
  author={J. Hale},
  series={Mathematical Surveys and Monographs},
  volume={25},
  publisher={American Mathematical Society},
  year={1988}
}

@article{HENRY1985165,
title = {Some infinite-dimensional Morse-Smale systems defined by parabolic partial differential equations},
journal = {Journal of Differential Equations},
volume = {59},
number = {2},
pages = {165-205},
year = {1985},
issn = {0022-0396},
author = {D. Henry}
}

@article{CYRANKAFOURIER,
author = {J. Cyranka},
year = {2014},
month = {04},
pages = {},
title = {Efficient and Generic Algorithm for Rigorous Integration Forward in Time of dPDEs: Part I},
volume = {59},
journal = {Journal of Scientific Computing},
}

@article{BrusselatorPrzyjetaPraca,
author = {J. Banaśkiewicz and P. Kalita and P. Zgliczynski},
year = {2024},
month = {11-12},
pages = {815-862},
title = {Computer-assisted validation of the existence of periodic orbit in the Brusselator system},
volume = {29},
journal = {Advances in Differential Equations},
}

@article{ArioliKoch2010,
  author    = {G. Arioli and H. Koch},
  title     = {Integration of dissipative partial differential equations: A case study},
  journal   = {SIAM Journal on Applied Dynamical Systems},
  year      = {2010},
  volume    = {9},
  number    = {3},
  pages     = {1119--1133},
}

@article{KiselevNazarovShterenberg2008,
  title = {Blow up and regularity for fractal Burgers equation},
  author = {A. Kiselev and F. Nazarov and R. Shterenberg},
  journal = {Dynamics of Partial Differential Equations},
  year = {2008},
  volume = {5},
  number = {3},
  pages = {211--240},
}

@misc{CAPD,
 title = {CAPD library},
 url = {http://capd.ii.uj.edu.pl}
}

@misc{CodeBurgers,
 title = {Code for Computer-Assisted Proofs for Burgers equation},
 url = {https://github.com/Kruci-no/BurgersEquationC1Computations.git}
}

@misc{CodeChaffeInfante,
 title = {Code for Computer-Assisted Proofs for Chafee--Infante equation},
 url = {https://github.com/Kruci-no/Chafee-Infante-ComputerAssistedProofVersion_2}
}

@article{KalitaZgliczynski2020,
  author       = {P. Kalita and P. Zgliczyński},
  title        = {On non-autonomously forced Burgers equation with periodic and Dirichlet boundary conditions},
  journal      = {Proceedings of the Royal Society of Edinburgh. Section A, Mathematics},
  year         = {2020},
  volume       = {150},
  number       = {4},
  pages        = {2025--2054},
}

@misc{wilczak2024symbolic,
  title        = {Symbolic dynamics for the Kuramoto-Sivashinsky PDE on the line II},
  author       = {D. Wilczak and P. Zgliczyński},
  year         = {2024},
  archivePrefix= {arXiv},
  eprint       = {2405.17087},
  note         = {arXiv preprint}
}

@misc{wilczak2025selfconsistent,
  title        = {Self-consistent bounds method for dissipative PDEs},
  author       = {D. Wilczak and P. Zgliczyński},
  year         = {2025},
  archivePrefix= {arXiv},
  eprint       = {2502.09760},
  note         = {arXiv preprint}
}
\end{document}